\theoremstyle{plain}
\newtheorem{lemma}{Lemma}[section]
\newtheorem{theorem}{Theorem}[section]
\newtheorem{proposition}{Proposition}[section]
\newtheorem{corollary}{Corollary}[section]
\title{Interlacement limit of a stopped random walk trace on a torus}
\author{Antal A. J\'{a}rai \and Minwei Sun}
\date{\today}
\def\P{\mathbf{P}}
\def\E{\mathbf{E}}
\def\Var{\mathrm{Var}}
\def\Z{\mathbb{Z}}
\def\T{\mathbb{T}}
\def\cL{\mathcal{L}}
\def\cpty{\mathrm{Cap}}
\def\bfY{\mathbf{Y}}
\def\bfx{\mathbf{x}}
\def\bfy{\mathbf{y}}
\def\bfK{\mathbf{K}}
\def\eps{\varepsilon}
\def\es{\emptyset}
\begin{document}
\maketitle

\begin{abstract}
We consider a simple random walk on $\Z^d$ started at the origin and stopped on its 
first exit time from $(-L,L)^d \cap \mathbb{Z}^d$. 
Write $L$ in the form $L = m N$ with $m = m(N)$ and $N$ an integer going to infinity 
in such a way that $L^2 \sim A N^d$ for some real constant $A > 0$.
Our main result is that for $d \ge 3$, the projection of the stopped trajectory 
to the $N$-torus locally converges, away from the origin, to an interlacement process
at level $A d \sigma_1$, where $\sigma_1$ is the exit time of a Brownian motion 
from the unit cube $(-1,1)^d$ that is independent of the interlacement process.
The above problem is a variation on results of Windisch (2008) and Sznitman (2009).
\end{abstract}

\emph{Keywords:} interlacement, random walk, torus, hashing, loop-erased random walk

\emph{MSC 2020:} 60K35

\section{Introduction}
\label{sec:intro}
A special case of a result of Windisch \cite{Windisch2008} --- extended further in \cite{JiriAugusto2016} 
--- states that the trace of a simple random walk on the discrete $d$-dimensional torus
$(\mathbb{Z} / N \mathbb{Z})^d$, for $d\ge 3$, started from stationarity and run for time 
$u N^d$ converges, in a local sense, to an interlacement process at level $u$, 
as $N \to \infty$. In this paper we will
be concerned with a variation on this result, for which our motivation was a 
heuristic analysis of an algorithm we used to simulate high-dimensional 
loop-erased random walk and the sandpile height distribution \cite{JaraiSun2019a}. 
Let us first describe our main result and then discuss the motivating 
problem. 

Consider a discrete-time lazy simple random walk $(Y_t)_{t \ge 0}$ starting at the origin $o$ on $\mathbb{Z}^d$. 
We write $\P_o$ for the probability measure governing this walk.
We stop the walk at the first 
time $T_L$ it exits the large box $(-L,L)^d$, where $L$ is an integer. We will take $L=L(N)$ of the form
$L = m N$, where $m = m(N)$ and $N$ is an integer, such that 
$L^2 \sim A N^d$ for some $A \in (0,\infty)$, as $N \to \infty$.
We consider the projection of the trajectory $\{ Y_t : 0 \le t < T_L \}$ to 
the $N$-torus $\mathbb{T}_N = [ -N/2, N/2 )^d \cap \mathbb{Z}^d$.
The projection is given by the map $\varphi_N : \Z^d \to \T_N$,
where for any $x \in \mathbb{Z}^d$, $\varphi_N(x)$ is the 
unique point of $\mathbb{T}_N$ such that $\varphi_N(x) \equiv x$ $\pmod N$, where 
congruence $\pmod{N}$ is understood coordinate-wise. 


Let $\sigma_1$ denote the exit time from $(-1,1)^d$ of a standard Brownian motion started at $o$.
We write $\mathbb{E}$ for the expectation associated to this Brownian motion.
For any finite set $K \subset \Z^d$, let $\cpty(K)$ denote
the capacity of $K$ \cite{LawlerLimic2010}.
For any $0 < R < \infty$ and $x \in \mathbb{Z}^d$, we denote
$B_R(x) = \{ y \in \mathbb{Z}^d : |y - x| < R \}$, where $| \cdot |$ is the Euclidean norm.
Let $\mathcal{K}_R$ denote the collection of all subsets of $B_R(o)$. 
Given $\mathbf{x} \in \mathbb{T}_N$ let $\tau_{\mathbf{x},N} : \mathbb{T}_N \to \mathbb{T}_N$ 
denote the translation of the torus by $\mathbf{x}$. Let $g : \mathbb{N} \to (0, \infty)$ 
be any function satisfying $g(N) \to \infty$.

\begin{theorem}
\label{thm:main}
Let $d \ge 3$.
For any $0 < R < \infty$, any $K \in \mathcal{K}_R$, and any $\mathbf{x}$ satisfying 
$\tau_{\mathbf{x},N} \varphi_N(B_R(o)) \cap \varphi_N(B_{g(N)}(o)) = \es$ we have
\begin{equation}
\label{e:main}
 \mathbf{P}_o \left[ \varphi_N(Y_t) \not\in \tau_{\mathbf{x},N} \varphi_N(K),\, 0 \le t < T_L \right]
 = \mathbb{E} \left[ e^{-d A \sigma_1 \cpty(K)} \right] + o(1) 
   \quad \text{as $N \to \infty$.}
\end{equation}
The error term depends on $R$ and $g$, but is uniform in $K$ and $\mathbf{x}$.
\end{theorem}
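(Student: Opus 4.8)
The plan is to combine a Brownian coupling controlling the large-scale motion of $(Y_t)$ with an excursion decomposition that reproduces, on the torus, the defining property $\P[\mathcal{I}^u \cap K = \es]= e^{-u\cpty(K)}$ of random interlacements, following the circle of ideas in \cite{Windisch2008, JiriAugusto2016}. Fix $R$, $K \in \mathcal{K}_R$ and an admissible $\mathbf{x}$, and set $\mathcal{A}_N = \varphi_N^{-1}\big(\tau_{\mathbf{x},N}\varphi_N(K)\big) \cap (-L,L)^d$; this is a union of about $(2m)^d$ pairwise disjoint translates of $K$ sitting on the coset $\mathbf{x} + N\Z^d$, and the probability in \eqref{e:main} is precisely $\P_o[\, Y_t \notin \mathcal{A}_N,\ 0 \le t < T_L\,]$. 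Since $B_R(o)$ is finite, uniformity over $K \in \mathcal{K}_R$ is automatic, and by translation invariance of $\T_N$ uniformity over $\mathbf{x}$ will follow from the weak dependence of the estimate on the starting point.

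First I would record the macroscopic picture. The lazy walk has coordinate step variance $1/(2d)$, so a Skorokhod embedding (or the KMT coupling) yields a standard $d$-dimensional Brownian motion $(\beta_s)$ with $\sup_{0 \le t \le CL^2}|Y_t - (2d)^{-1/2}\beta_t| = o(L)$ with probability $1 - o(1)$. Since $(2d)^{-1/2}\beta$ leaves $(-L,L)^d$ exactly when $\beta$ leaves $(-\sqrt{2d}\,L,\sqrt{2d}\,L)^d$, Brownian scaling turns this exit time into $2dL^2$ times an exit time of standard Brownian motion from $(-1,1)^d$; together with $L^2 \sim AN^d$ this gives $T_L = 2dAN^d\sigma_1\big(1+o(1)\big)$ under the coupling, for a copy $\sigma_1$ of that exit time, jointly with the coarse-grained trajectory. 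A little care is needed to check that the exit time is stable under the $o(L)$ perturbation (a.s.\ the Brownian path is not tangent to $\partial(-1,1)^d$ at its exit point). The laziness is immaterial here: for a non-lazy walk $T_L$ would be $\approx dAN^d\sigma_1$, but the interlacement rate per unit time would double correspondingly.

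The heart of the matter is a conditional, quantitative interlacement limit. Conditionally on the coarse-grained trajectory up to time $T_L$ --- hence on an approximate value $s$ of $\sigma_1$ --- I would split the path into successive excursions between $\bigcup_z \big(z + B_r(o)\big)$ and the complement of $\bigcup_z\big(z + B_{2r}(o)\big)$, with $z$ ranging over $(\mathbf{x}+N\Z^d)\cap(-L,L)^d$ and $R \ll r \ll N$; on the torus these are just excursions to and from $B_r(\mathbf{x}) \subset \T_N$. Two inputs are then needed. \emph{(i) Poissonisation and independence of excursions:} consecutive excursions are separated by displacements of order at least $r \to \infty$, over which the torus walk relaxes (here $T_L \asymp N^d$ exceeds the torus mixing time $\asymp N^2$ since $d \ge 3$), so the number of excursions meeting $\mathcal{A}_N$ is asymptotically Poisson and distinct excursions hit ``independently''; this is the decoupling input of \cite{Windisch2008, JiriAugusto2016}. \emph{(ii) Self-averaging of the intensity:} the expected number of hitting excursions equals, up to $1+o(1)$, the expected number of visits of the torus walk to $\tau_{\mathbf{x},N}\varphi_N(K)$ divided by the number of such visits per excursion, which by the Green's-function asymptotics for $\T_N$ and $\Z^d$ (see \cite{LawlerLimic2010}) --- and, crucially, by the fact that $\mathbf{x}+N\Z^d$ is an $N$-spaced grid filling a box of side $mN$ with $m\to\infty$, so the walk is always near \emph{some} copy of $\mathbf{x}$ at the ergodic rate, whatever the shape of the coarse trajectory --- equals $\big(T_L\,\cpty(K)/(2N^d)\big)(1+o(1))$, depending on the coarse trajectory only through $T_L$. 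All $r$-dependent factors cancel, and inserting $T_L \approx 2dAN^d s$ gives intensity $dAs\,\cpty(K)(1+o(1))$, so the conditional probability in \eqref{e:main} tends to $e^{-dAs\,\cpty(K)}$.

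Two smaller points remain. The hypothesis on $\mathbf{x}$ enters precisely here: the coset $\mathbf{x}+N\Z^d$ avoids $B_{g(N)}(o)$, so the single transient trajectory issuing from $o$ --- which is what makes the torus trace near $\varphi_N(o)$ differ from a genuine interlacement --- meets $\mathcal{A}_N$ only with probability $O\big(g(N)^{2-d}\cpty(K)\big) = o(1)$, uniformly, and may be discarded; this also absorbs the difference between starting from $o$ and from the stationary distribution. Finally, since $e^{-dAs\,\cpty(K)} \le 1$ for every coarse trajectory, uniformly in $K$ and $\mathbf{x}$, I would remove the conditioning by dominated convergence, using $T_L/(2dAN^d)\to\sigma_1$, to obtain $\P_o[\cdot] \to \mathbb{E}[e^{-dA\sigma_1\cpty(K)}]$ with the stated uniform error. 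The main obstacle is input (ii): establishing that the random number of excursions near the $\asymp m^d$ periodic copies of $K$ concentrates, given the macroscopic path, on a value that is a \emph{deterministic} function of $T_L$ alone. This is simultaneously what produces the clean exponent $dA\sigma_1\cpty(K)$ for each frozen value of $\sigma_1$ and what yields the asserted independence of $\sigma_1$ from the limiting interlacement.
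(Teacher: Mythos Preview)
Your outline takes a genuinely different route from the paper. The paper does not use an excursion decomposition or a Brownian coupling at the outset. Instead it chops the walk into fixed-time stretches of length $n = \lfloor N^\delta\rfloor$ with $2 < \delta < d$, and conditions on the \emph{endpoints} $Y_0, Y_n, Y_{2n}, \dots, Y_{Sn}$ of these stretches, where $S$ is the first index with $Y_{Sn}\notin(-L,L)^d$. The core technical work (Propositions~\ref{prop:capacity_ub}--\ref{prop:weaker_capacity_ub}) is a pointwise LCLT estimate showing that, conditionally on well-behaved endpoints $y'_{\ell-1}, y'_\ell$, the probability that the $\ell$-th stretch avoids $\varphi^{-1}(\mathbf{K})$ equals $1 - \tfrac{1}{2}\cpty(K)\,(n/N^d)\,(1+o(1))$, \emph{uniformly} in those endpoints. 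The product over stretches then depends on the path only through $S$, and Donsker's theorem gives $(n/N^d)S \Rightarrow 2dA\sigma_1$ at the very end. Your route front-loads the Brownian limit (via KMT) and back-loads the interlacement limit (via excursion Poissonisation as in \cite{Windisch2008}); this is natural and should work, and it has the virtue of reusing the Windisch machinery more directly. The paper's route avoids any appeal to a coupling but pays for it with rather involved heat-kernel case analyses.

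The genuine gap is the one you flag yourself, and it is not a small one: input (ii) is essentially the whole proof. You need that, for \emph{every} realisation of the coarse path up to the exit, the number of hitting excursions concentrates at $T_L\,\cpty(K)/(2N^d)$ --- not merely in expectation, and not merely after averaging over coarse shapes. This is not contained in \cite{Windisch2008,JiriAugusto2016}, where the run time is deterministic and no conditioning on macroscopic shape is required; here $T_L$ is random and correlated with the very trajectory whose excursions you are counting. Relatedly, ``conditioning on the coarse-grained trajectory'' is never made precise in your outline: at what scale, and on what $\sigma$-algebra? Until you specify this, the dominated-convergence step at the end is formal, because you have not exhibited a function of the path that simultaneously determines an approximate $\sigma_1$ and decouples from the hitting events. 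One clean way to close this --- and this is effectively what the paper does --- is to condition not on a vague coarse path but on a discrete skeleton $(Y_{\ell n})_{\ell\ge 0}$ at a mesoscopic time scale $N^2 \ll n \ll N^d$: each stretch is long enough to mix on $\T_N$ (so the hitting probability is insensitive to the endpoints), yet short enough that the skeleton carries all the macroscopic information and its length $S$ serves as your $T_L/n$.
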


Note that the trace of the lazy simple random walk stopped at time $T$ is the 
same as the trace of the simple random walk stopped at the analogous exit time.
We use the lazy walk for convenience of the proof.

Our result is close in spirit --- but different in details ---
compared to a result of Sznitman \cite{Sznitman2009}
that is concerned with simple random walk on a discrete
cylinder. The \emph{interlacement process} was introduced by Sznitman in 
\cite{Sznitman2010}. It consists of a one-parameter family 
$(\mathcal{I}^u)_{u > 0}$ of random subsets of $\Z^d$ ($d \ge 3$), 
where the distribution of $\mathcal{I}^u$ can be characterized by the relation
\begin{equation}
\label{e:interl-def}
  \P [ \mathcal{I}^u \cap K = \es ]
   = \exp ( - u \cpty(K) ) \quad \text{for any finite $\es \not= K \subset \Z^d$.}
\end{equation}
The precise construction of a process satisfying \eqref{e:interl-def} represents $\mathcal{I}^u$ as the trace of a Poisson cloud of bi-infinite random walk trajectories (up to time-shifts), where $u$ is an
intensity parameter. We refer to \cite{Sznitman2010} and the 
books \cite{Drewitz2014,Sznitman-book} for further details.
Comparing \eqref{e:main} and \eqref{e:interl-def} we now formulate precisely what we mean by 
saying that the stopped trajectory, locally, is described by an interlacement process
at the random level $u = A d \sigma_1$.

Let $g' : \mathbb{N} \to (0, \infty)$ 
be any function satisfying $g'(N) \to \infty$. 
Note this does not have to be the same function as $g(N)$.
Let $\bfx_N$ be an arbitrary sequence satisfying $\tau_{\mathbf{x}_N,N} \varphi_N(B_{g'(N)}(o)) \cap \varphi_N(B_{g(N)}(o)) = \es$.
Define the sequence of random configurations $\omega_N \subset \Z^d$ by 
\begin{equation*}
    \omega_N = \{ x \in \Z^d : \tau_{\mathbf{x}_N,N} \varphi_N(x) \in \{ \varphi_N(Y_t) : 0 \le t < T_L \} \}.
\end{equation*}
Define the process $\tilde{\mathcal{I}}$ by requiring that for all finite $K \subset \Z^d$ we have
\begin{equation*}
    \mathbf{P} [ \tilde{\mathcal{I}} \cap K = \es ] = \mathbb{E} [ e^{-dA\sigma_1 \cpty(K)}]. 
\end{equation*}
To see that this formula indeed defines a process that is also unique, write the right-hand side as
\begin{equation*}
    \int_0^\infty e^{-u \cpty(K)} \, f_{\sigma_1}(u) \, du
    = \int_0^\infty \mathbf{P} [ \mathcal{I}^u \cap K = \es ] \, f_{\sigma_1}(u) \, du,
\end{equation*}
where $f_{\sigma_1}$ is the density of $A d \sigma_1$. Then via the inclusion-exclusion formula, we see that we necessarily have for all finite sets $B \subset K$ the equality
\begin{equation*}
    \mathbf{P} [ \tilde{\mathcal{I}} \cap K = B]
    = \int_0^\infty \mathbf{P} [ \mathcal{I}^u \cap K = B ] \, f_{\sigma_1}(u) \, du,
\end{equation*}
and the right-hand side can be used as the definition of the finite-dimensional marginals of $\tilde{\mathcal{I}}$. Note that $\tilde{\mathcal{I}}$ lives in a compact space (the space can be identified with $\{ 0, 1\}^{\Z^d}$ with the product topology). Hence the finite-dimensional marginals uniquely determine the distribution of $\tilde{\mathcal{I}}$, by Kolmogorov's extension theorem.

\begin{theorem}
\label{thm:convergence}
Let $d \ge 3$. Under $\mathbf{P}_o$ the law of the configuration $\omega_N$ converges weakly to the law of $\tilde{\mathcal{I}}$, as $N \to \infty$.
\end{theorem}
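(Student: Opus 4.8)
The plan is to obtain Theorem~\ref{thm:convergence} as a soft consequence of Theorem~\ref{thm:main}. All of the configurations $\omega_N$, as well as the limit $\tilde{\mathcal{I}}$, take values in the compact metrizable space $\{0,1\}^{\Z^d}$ equipped with the product topology, so the family of laws of $\omega_N$ is automatically tight, and weak convergence will follow once we establish convergence of all finite-dimensional marginals: that for every finite $K \subset \Z^d$ and every $B \subseteq K$,
\[
  \mathbf{P}_o[\,\omega_N \cap K = B\,] \longrightarrow \mathbf{P}[\,\tilde{\mathcal{I}} \cap K = B\,]
  \qquad\text{as } N \to \infty.
\]
Indeed, the cylinder events $\{\omega \cap K = B\}$ generate the Borel $\sigma$-algebra of $\{0,1\}^{\Z^d}$, and their indicators span a uniformly dense subspace of $C(\{0,1\}^{\Z^d})$ by the Stone--Weierstrass theorem.

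First I would reduce the marginals to avoidance probabilities. By inclusion--exclusion over the subset of $B$ that $\omega_N$ misses,
\[
  \mathbf{P}_o[\,\omega_N \cap K = B\,]
   = \sum_{C \subseteq B} (-1)^{|C|}\, \mathbf{P}_o\bigl[\,\omega_N \cap \bigl((K\setminus B)\cup C\bigr) = \es\,\bigr],
\]
a finite signed combination of quantities $\mathbf{P}_o[\omega_N \cap K' = \es]$ with $K\setminus B \subseteq K' \subseteq K$. The identical identity holds for $\tilde{\mathcal{I}}$ in place of $\omega_N$ --- this is precisely the inclusion--exclusion computation already used to define the finite-dimensional marginals of $\tilde{\mathcal{I}}$ before the statement of the theorem. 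Hence it suffices to show, for each fixed finite $K' \subset \Z^d$, that
\[
  \mathbf{P}_o[\,\omega_N \cap K' = \es\,] \longrightarrow \mathbb{E}\bigl[e^{-dA\sigma_1 \cpty(K')}\bigr] = \mathbf{P}[\,\tilde{\mathcal{I}} \cap K' = \es\,].
\]

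To prove this last convergence I would simply invoke Theorem~\ref{thm:main}. Unwinding the definition of $\omega_N$, the event $\{\omega_N \cap K' = \es\}$ equals $\{\varphi_N(Y_t) \notin \tau_{\mathbf{x}_N,N}\varphi_N(K'),\ 0 \le t < T_L\}$. Fix $R < \infty$ with $K' \subseteq B_R(o)$, so that $K' \in \mathcal{K}_R$. Since $g'(N) \to \infty$ we have $R < g'(N)$ for all large $N$, and then
\[
  \tau_{\mathbf{x}_N,N}\varphi_N(B_R(o)) \cap \varphi_N(B_{g(N)}(o))
   \subseteq \tau_{\mathbf{x}_N,N}\varphi_N(B_{g'(N)}(o)) \cap \varphi_N(B_{g(N)}(o)) = \es,
\]
so the hypotheses of Theorem~\ref{thm:main} hold with this $R$ and with $\mathbf{x} = \mathbf{x}_N$. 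As the error term there, for fixed $R$ and $g$, tends to $0$ uniformly in the set and the translation, Theorem~\ref{thm:main} gives $\mathbf{P}_o[\omega_N \cap K' = \es] = \mathbb{E}[e^{-dA\sigma_1 \cpty(K')}] + o(1)$. Feeding this back into the two inclusion--exclusion expansions yields $\mathbf{P}_o[\omega_N \cap K = B] \to \mathbf{P}[\tilde{\mathcal{I}} \cap K = B]$, which completes the argument.

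I do not anticipate a genuine obstacle: the analytic content is entirely contained in Theorem~\ref{thm:main}, and Theorem~\ref{thm:convergence} merely repackages it as a statement about random subsets of $\Z^d$. The only minor points needing care are (i) checking that the window hypothesis involving $g'$ forces the hypothesis involving $R$ in Theorem~\ref{thm:main} once $N$ is large, as done above, and (ii) observing that for fixed $K$ the maps $\varphi_N$ and $\tau_{\mathbf{x}_N,N}\varphi_N$ are injective on $K$ for all large $N$, so that the finitely many cylinder events are genuinely distinguished and the limiting identities make sense; neither causes any difficulty.
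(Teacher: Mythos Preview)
Your proposal is correct and follows essentially the same approach as the paper: reduce to avoidance probabilities via inclusion--exclusion, then invoke Theorem~\ref{thm:main}. Your write-up is more explicit than the paper's (which is only a few lines), in particular in checking that the $g'$-hypothesis on $\mathbf{x}_N$ yields the $R$-hypothesis of Theorem~\ref{thm:main} for large $N$, but the underlying argument is identical.
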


\begin{proof}[Proof of Theorem \ref{thm:convergence} assuming Theorem \ref{thm:main}.]

For events of the form $\{ \omega_N \cap K = \es \}$, Theorem \ref{thm:main} immediately implies that \[ \P_o [ \omega_N \cap K = \es ] \stackrel{N \to \infty}{\longrightarrow} 
   \P [ \tilde{\mathcal{I}} \cap K = \es ].
   \]
For events of the form $\{ \omega_N \cap K = B \}$, the inclusion-exclusion formula represents $\P_o [ \omega_N \cap K = B ]$ as a linear combination of probabilities of the former kind, and hence convergence follows.
\end{proof}

Our motivation to study the question in Theorem \ref{thm:main} was a
simulation problem that arose in our numerical study of high-dimensional 
sandpiles \cite{JaraiSun2019a}. We refer the interested reader 
to \cite{Redig2006-survey,Dhar2006-survey,Jarai2018-survey} for background on sandpiles. 
In our simulations we needed to generate loop-erased random walks (LERW) from the origin $o$ to the 
boundary of $[-L,L]^d$ where $d \ge 5$. The LERW is defined by running a
simple random walk from $o$ until it hits the boundary, and erasing all loops 
from its trajectory chronologically, as they are created. We refer to the
book \cite{LawlerLimic2010} for further background on LERW (which is not needed 
to understand the results in this paper). It is known from 
results of Lawler \cite{Lawler2013} that in dimensions $d \ge 5$ the LERW visits 
on the order of $L^2$ vertices, the same as the simple random walk 
generating it. As the number of vertices visited is a lot smaller than the
volume $c L^d$ of the box, an efficient way to store the path generating
the LERW is provided by the well-known method of \emph{hashing}. We refer to \cite{JaraiSun2019a} for
a discussion of this approach, and only provide a brief summary here. Assign to any 
$x \in [-L,L]^d \cap \mathbb{Z}^d$ an integer value 
$f(x) \in \{ 0, 1, \dots, C L^2 \}$ that is used to label the information 
relevant to position $x$, where $C$ can be a large constant or slowly growing to infinity. 
Thus $f$ is necessarily highly non-injective. 
However, we may be able to arrange that with high probability the restriction 
of $f$ to the simple random walk trajectory is not far from injective, and 
then memory use can be reduced from order $L^d$ to roughly $O(L^2)$.

A simple possible choice of the hash function $f$ can be to compose the map 
$\varphi_N : [-L,L]^d \cap \mathbb{Z}^d \to \mathbb{T}_N$ with a linear 
enumeration of the vertices of $\mathbb{T}_N$, whose range has the required
size\footnote{This is slightly different than what was used in \cite{JaraiSun2019a}.}.
The method can be expected to be effective, if the projection $\varphi_N(Y[0,T))$
spreads roughly evenly over the torus $\mathbb{T}_N$ with high probability.
Our main theorem establishes a version of such a statement, as the right-hand side
expression in \eqref{e:main} is independent of $\mathbf{x}$.


We now make some comments on the proof of Theorem \ref{thm:main}.
We refer to \cite[Theorem 3.1]{Drewitz2014} for the strategy of the proof in the case when
the walk is run for a fixed time $u N^d$. The argument presented there goes by decomposing
the walk into stretches of length $\lfloor N^\delta \rfloor$ for some $2 < \delta < d$ , 
and then estimating the (small) probability in each stretch that $\tau_{\bfx,N} \varphi_N(K)$ is 
hit by the projection. We follow the same outline for the stopped lazy random walk. 
However, the elegant time-reversal argument given in \cite{Drewitz2014} is not convenient in our
setting, and we need to prove a delicate estimate on the probability that $\tau_{\bfx,N} \varphi_N(K)$ 
is hit, \emph{conditional} on the start and end-points of the stretch.  
For this, we only want to consider stretches with 
``well-behaved'' starting and end-points. We also classify stretches as ``good stretch'' where
the total displacement is not too large, and as ``bad stretch'' otherwise. We do this in such a way that the 
expected number of ``bad stretches'' is small and summing over the ``good stretches'' 
gives us the required behaviour. 

\emph{Possible generalisations.} \\
(1) It is not essential that we restrict to the simple 
random walk: any random walk for which the results in Section \ref{sec:preliminaries}, hold
(such as finite range symmetric walks) would work equally well. \\
(2) The paper \cite{Windisch2008} considers several distant sets $K^1, \dots, K^r$, and we believe this 
would also be possible here, but would lead to further technicalities in the presentation. \\
(3) It is also not essential that the rescaled domain be $(-1,1)^d$, and we believe it could be
replaced by any other domain with sufficient regularity of the boundary.

\emph{A note on constants.} All constants will be positive and finite. Constants denoted $C$ or $c$ 
will only depend on dimension $d$ and may change from line to line. If we need to refer to a constant later, 
it will be given an index, such as $C_1$.

We now describe the organization of this paper. 
In Section \ref{sec:preliminaries}, we first introduce some basic notation, then we 
recall several useful known results on random walk and state the key propositions required 
for the proof of the main theorem, Theorem \ref{thm:main}.
Section \ref{sec:proof-of-main} contains the proof of the main theorem, assuming 
the key propositions.
Finally, in Section \ref{sec:proof of key propositions} we provide the proofs of the propositions 
stated in Section \ref{sec:preliminaries}.

\section{Preliminaries}
\label{sec:preliminaries}

\subsection{Some notation}
\label{ssec:some notaion}

We first introduce some notation used in this paper. 
In Section \ref{sec:intro}, we denoted the discrete torus 
$\T_N = [-N/2, N/2)^d \cap \mathbb{Z}^d$, $d\geq 3$ 
and the canonical projection map $\varphi_N: \Z^d \to \T_N$.
From here on, we will omit the $N$-dependence and write $\varphi$ and $\tau_{\bfx}$ instead.

We write vertices and subsets of the torus in bold, i.e.~$\bfx \in \T_N$ and 
$\bfK \subset \T_N$. In order to simplify 
notation, in the rest of the paper we abbreviate $\bfK = \tau_{\bfx} \varphi(K)$.



Let $(Y_t)_{t \ge 0}$ be a discrete-time lazy simple random walk on $\Z^d$, 
that is,
\[ \P [ Y_{t+1} = y' \,|\, Y_t = x' ]
   = \begin{cases}
	   \frac{1}{2} & \text{when $y' = x'$;} \\
		 \frac{1}{4d} & \text{when $|y' - x'| = 1$.}
	   \end{cases} \] 
We denote the corresponding lazy random walk on $\T_N$ by 
$(\bfY_t)_{t\geq 0} = (\varphi(Y_t))_{t\geq 0}$. 
Let $\P_{x'}$ denote the distribution of the lazy random walk on $\Z^d$ started from $x' \in \Z^d$,
and write $\P_{\bfx}$ for the distribution of the lazy random walk on $\T_N$ started from 
$\bfx = \varphi(x') \in \T_N$. We write $p_t(x',y') = \P_{x'} [ Y_t = y' ]$ for the 
$t$-step transition probability. Further notation we will use:
\begin{itemize}
\item $L = m N$, where $L^2 \sim A N^d$ as $N \to \infty$ for some constant $A \in (0,\infty)$
\item $D = (-m,m)^d$, rescaled box, indicates which copy of the torus the walk is in
\item $n = \lfloor N^\delta \rfloor$ for some $2 < \delta < d$, long enough for the mixing property on the
torus, but short compared to $L^2$
\item $\mathbf{x_0} \in \mathbf{K}$ is a fixed point of $\mathbf{K}$
\item we write points in the original lattice $\Z^d$ with a prime, such as $y'$, and 
decompose a point $y'$ as $y N+\bfy$ with $y$ in another lattice isomorphic to $\Z^d$ and $\bfy = \varphi(y') \in \T_N$
\item $T = \inf \{ t \ge 0 : Y_t \not\in (-L,L)^d \}$, the first exit time
from $(-L,L)^d$
\item $S = \inf \{ \ell \ge 0 : Y_{n \ell} \not\in (-L,L)^d \}$, so that the first multiple of $n$
when the rescaled point $Y_{n \ell} / N$ is not in $(-m,m)^d$ equals $S\cdot n$
\end{itemize}
We omit the dependence on $d$ and $N$ from some notation above for simplicity.

\subsection{Some auxiliary results on random walk}
\label{ssec:auxiliary results}

In this section, we collect some known results required for the proof of Theorem \ref{thm:main}. 
We will rely heavily on the Local Central Limit Theorem (LCLT)
\cite[Chapter 2]{LawlerLimic2010}, with error term, 
and the Martingale maximal inequality \cite[Eqn.~(12.12) of Corollary 12.2.7]{LawlerLimic2010}. 
We will also use Equation (6.31) in \cite{LawlerLimic2010}, that relates $\cpty(K)$ to the
probability that a random walk started from the boundary of a large ball with radius 
$n$ hits the set $K$ before exiting the ball. 
In estimating some error terms in our arguments, sometimes we will use the
Gaussian upper and lower bounds \cite{HebischSaloff-Coste1993}. 
We also need to derive a lemma related to the mixing property on the torus 
\cite[Theorem 5.6]{LevinPeresWilmer2017} to show that the 
starting positions of different stretches are not far from uniform on the torus; 
see Lemma \ref{lem:mixing_property}.

We recall the LCLT from \cite[Chapter 2]{LawlerLimic2010}. 
The following is a specialisation of \cite[Theorem 2.3.11]{LawlerLimic2010} 
to lazy simple random walk.
The covariance matrix $\Gamma$ and the square root $J^{*}(x)$ of the associated quadratic 
form are given by  
\begin{align*}
\Gamma = (2d)^{-1}I,
\quad J^{*}(x) = (2d)^{\frac{1}{2}}|x|,
\end{align*}
where $I$ is the $(d \times d)$-unit matrix.

Let $\bar{p}_t(x')$ denote the estimate of $p_t(x')$ that one obtains by 
the LCLT, for lazy simple random walk. We have  
\begin{align*}
\bar{p}_t(x') 
&= \frac{1}{(2\pi t)^{d/2}\sqrt{\det\Gamma}} \exp\left(-\frac{J^{*}(x')^2}{2t}\right) \\
&= \frac{1}{(2\pi t)^{d/2}(2d)^{-d/2}} \exp\left(-\frac{2d\,|x'|^2}{2t}\right) \\
&= \frac{\bar{C}}{t^{d/2}}\exp\left(-\frac{d\,|x'|^2}{t}\right).
\end{align*}
The lazy simple random walk $(Y_t)_{t\geq 0}$ in $\Z^d$ is aperiodic, 
irreducible with mean zero, finite second moment, and finite exponential moments. 
All joint third moments of the components of $Y_1$ vanish.

\begin{theorem}[\cite{LawlerLimic2010}, Theorem 2.3.11]
For lazy simple random walk $(Y_t)_{t\geq 0}$ in $\Z^d$,
there exists $\rho>0$ such that for all $t\geq 1$ and all $x'\in\Z^d$ with $|x'|<\rho t$,
\begin{align*}
p_t(x') 
= \bar{p}_t(x')\exp\left\{O\left(\frac{1}{t}+\frac{|x'|^4}{t^3}\right)\right\}.
\end{align*}
\end{theorem}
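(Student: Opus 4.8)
The plan is to obtain the statement as a direct specialisation of the general local central limit theorem \cite[Theorem 2.3.11]{LawlerLimic2010}, by verifying its hypotheses for the lazy simple random walk and identifying the objects $\Gamma$, $J^*$ and $\bar p_t$ that enter its conclusion. First I would record the elementary structural facts about a single step: $Y_1$ is bounded with $|Y_1| \le 1$ (so all moments, and in particular all exponential moments, are finite), it has mean zero, and the chain is irreducible. Aperiodicity is where the laziness is used: the holding probability $\tfrac12$ gives a positive one-step return probability, so the period is $1$ --- this is exactly why we work with the lazy walk rather than ordinary simple random walk, whose period is $2$.

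Next I would compute the covariance matrix. Because each step changes at most one coordinate, $\E[Y_1^{(i)} Y_1^{(j)}] = 0$ for $i \ne j$, while $\E[(Y_1^{(i)})^2] = 2 \cdot \tfrac{1}{4d} = \tfrac{1}{2d}$; hence $\Gamma = (2d)^{-1} I$, so $\Gamma^{-1} = 2d\, I$, the associated quadratic form is $x \cdot \Gamma^{-1} x = 2d |x|^2$, and its square root is $J^*(x) = (2d)^{1/2}|x|$. Substituting into the definition of the Gaussian approximant gives $\bar p_t(x) = (2\pi t)^{-d/2}(\det\Gamma)^{-1/2}\exp(-J^*(x)^2/(2t)) = \bar C\, t^{-d/2}\exp(-d|x|^2/t)$ with $\bar C = (d/\pi)^{d/2}$, matching the display preceding the statement.

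Finally, to get the stated error term I would use the sharper form of the estimate available when the third moments vanish. Since $Y_1 \stackrel{d}{=} -Y_1$, all odd joint moments of the components of $Y_1$ are zero, in particular $\E[Y_1^{(i)} Y_1^{(j)} Y_1^{(k)}] = 0$; this is precisely the condition under which \cite[Theorem 2.3.11]{LawlerLimic2010} yields a multiplicative error of the form $\exp\{O(1/t + |x|^4/t^3)\}$ --- rather than the cubic $\exp\{O(1/t + |x|^3/t^2)\}$ available without it --- uniformly over $|x| < \rho t$ for some $\rho = \rho(d) > 0$. Combining these observations gives the claim. There is no genuine obstacle here beyond bookkeeping: the entire content sits in \cite[Theorem 2.3.11]{LawlerLimic2010}, and the only points requiring attention are matching Lawler and Limic's normalisation conventions to the lazy walk and remembering to invoke the vanishing-third-moment hypothesis so that the error is $|x|^4/t^3$ rather than merely cubic.
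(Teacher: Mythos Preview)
Your proposal is correct and matches the paper's treatment exactly: the paper does not prove this theorem but simply cites \cite[Theorem 2.3.11]{LawlerLimic2010} after recording, in the paragraph preceding the statement, precisely the same checks you give (aperiodicity, irreducibility, mean zero, finite exponential moments, $\Gamma = (2d)^{-1}I$, $J^*(x) = (2d)^{1/2}|x|$, the form of $\bar p_t$, and the vanishing of the joint third moments).
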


The Martingale maximal inequality in \cite[Eqn.~(12.12) of Corollary 12.2.7]{LawlerLimic2010} is stated as follows.
Let $(Y^{(i)}_t)_{t \ge 0}$ denote the $i$-th coordinate of $(Y_t)_{t \ge 0}$ ($1 \le i \le d$). The standard deviation 
$\sigma$ of $Y^{(i)}_1$ is given by $\sigma^2 = (2d)^{-1}$. For all $t \ge 1$ and all $r > 0$ we have
\begin{equation}
\label{e:maximal_ineq}
\P_o \left[\max_{0\leq j\leq t} Y^{(i)}_j \geq r\sigma\sqrt{t} \right]
\le e^{-r^2/2}\exp\left\{O\left(\frac{r^3}{\sqrt{t}}\right)\right\}.
\end{equation}

Now we state the result of \cite[Eqn.~(6.31)]{LawlerLimic2010}.
Recall that $B_r(o)$ is the discrete ball centred at $o$ with radius $r$. Let for any subset $B \subset \Z^d$
\[ \xi_B 
   = \inf \{ t \ge 1 : Y_t \not\in B) \}. \]
Let $\partial B_r(o) = \{ y' \in \Z^d \setminus B_r(o) : \text{$\exists x' \in B_r(o)$ such that $|x' - y'| = 1$} \}$. 
For a given finite set $K \subseteq\Z^d$, let $H_K$ denote the hitting time 
\[ H_K
   = \inf \{ t \ge 1 : Y_t \in K \}. \]
Then we have
\begin{equation}
\label{e:capacity_ball}
\frac{1}{2}\cpty(K)
= \lim_{r\to\infty}\sum_{y' \in \partial B_r(o)} \P_{y'}[H_{K} < \xi_{B_r(o)}].
\end{equation} 
Here $\cpty(K)$ is the capacity of $K$; see \cite[Section 6.5]{LawlerLimic2010}, 
which states the analogous statement for the simple random walk. Since we consider 
the lazy random walk, this introduces a factor of $1/2$.

In estimating some error terms in our arguments, sometimes we will use the
Gaussian upper and lower bounds \cite{HebischSaloff-Coste1993}: 
there exist constants $C = C(d)$ and $c = c(d)$ such that
\begin{equation}
\begin{split}
\label{e:gaussian_ub_lb}
  p_t(x',y') 
  &\le \frac{C}{t^{d/2}} \exp \left( - c \frac{|y'-x'|^2}{t} \right), \quad
		\text{for $x', y' \in \Z^d$ and $t \ge 1$;} \\
  p_t(x',y') 
  &\ge \frac{c}{t^{d/2}} \exp \left( - C \frac{|y'-x'|^2}{t} \right),
  \quad\text{for $|y'-x'| \le ct$.}	
\end{split}
\end{equation}
Recall that the norm $|\cdot|$ refers to the Euclidean norm.

Regarding mixing times, recall that lazy simple random walk on the $N$-torus mixes in time $N^2$ 
\cite[Theorem 5.6]{LevinPeresWilmer2017}.
With this in mind we derive the following simple lemma.

Recall that $2<\delta<d$ and $n=\lfloor N^{\delta} \rfloor$.
\begin{lemma}
\label{lem:mixing_property}
There exists $C=C(d)$ such that for any $N \geq 1$ and any $t \ge n$ we have 
\begin{equation*}
\P_{\mathbf{o}}[\bfY_t = \bfx] 
\le \frac{C}{N^d},
\quad \bfx \in \T_N.
\end{equation*}
\end{lemma}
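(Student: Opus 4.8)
The plan is to bound the torus heat kernel at times $t \ge n$ by comparing it to the heat kernel on $\Z^d$ via the standard wrapping (folding) identity: for $\bfx \in \T_N$,
\[
  \P_{\mathbf o}[\bfY_t = \bfx] = \sum_{z \in \Z^d} p_t(o, x' + zN),
\]
where $x' \in (-N/2, N/2)^d \cap \Z^d$ is the representative of $\bfx$. First I would split the sum according to whether $|x' + zN| \le c t$ or not, so as to apply the appropriate Gaussian bound from \eqref{e:gaussian_ub_lb}. For the ``far'' terms, the Gaussian upper bound $p_t(o, w) \le C t^{-d/2} \exp(-c|w|^2/t)$ is dominated by something summable, and because $t \ge n = \lfloor N^\delta \rfloor$ with $\delta > 2$ the quantity $N^2/t \to 0$; a little care shows the total contribution of the far terms is $o(N^{-d})$, or at worst $O(N^{-d})$, uniformly in $\bfx$.

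The ``near'' terms — those $z$ with $|x' + zN| \le ct$ — are the main point. Here I would simply use the crude uniform bound $p_t(o, w) \le C t^{-d/2}$ valid for all $w$ and $t \ge 1$ (which follows from the LCLT, or directly from the first line of \eqref{e:gaussian_ub_lb}), and count how many lattice points $z \in \Z^d$ satisfy $|x' + zN| \le ct$. Since the points $\{x' + zN : z \in \Z^d\}$ form a translate of the lattice $N\Z^d$, the number of them in a ball of radius $ct$ is at most $C (ct/N)^d + C \le C' t^d / N^d$ (the additive constant being absorbed because $t \ge n \ge 1$, so $t/N$ need not be large — but note $t \ge N^\delta$ with $\delta > d$ is false in general, so I must keep the additive $+C$ and handle it). Multiplying the count $C t^d/N^d$ by the per-term bound $C t^{-d/2}$ gives $C t^{d/2} / N^d$, which is much bigger than $N^{-d}$ and is the wrong direction — so the crude bound is too lossy and I need to be more careful.

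The fix is to not throw away the Gaussian decay on the near terms. Writing $w = x' + zN$, one has $\sum_{z} p_t(o, w) \le C t^{-d/2} \sum_{z \in \Z^d} \exp(-c |x' + zN|^2 / t)$, and this last sum is a theta-type sum that I would bound by comparison with an integral: $\sum_{z \in \Z^d} \exp(-c|x' + zN|^2/t) \le C \int_{\R^d} \exp(-c' |y|^2/t)\, N^{-d}\, dy + C = C t^{d/2} N^{-d} + C$, where the $N^{-d}$ is the density of the lattice $N\Z^d$ and the additive $C$ covers the regime where $t/N^2$ is small (at most one or two terms of the sum are then of order $1$). Since $t \ge n = \lfloor N^\delta \rfloor$ and $\delta > 2$, we have $t^{d/2} N^{-d} = (t/N^2)^{d/2} \ge c N^{(\delta-2)d/2} \to \infty$, so the additive $C$ is negligible against the main term, giving $\sum_z \exp(-c|x'+zN|^2/t) \le C t^{d/2} N^{-d}$. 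Combining, $\P_{\mathbf o}[\bfY_t = \bfx] \le C t^{-d/2} \cdot C t^{d/2} N^{-d} = C N^{-d}$, uniformly in $\bfx$ and in $t \ge n$, as claimed.

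The main obstacle I expect is exactly the bookkeeping around the additive constant in the lattice-point / theta-sum estimate: one has to verify that the regime $t$ comparable to (or only moderately larger than) $N^2$ — rather than $t \gg N^2$ — still yields the clean $C/N^d$ bound, which is where the hypothesis $\delta > 2$ (equivalently $t \ge n$ with $n = \lfloor N^\delta \rfloor$) gets used. An alternative, perhaps cleaner, route would be to invoke the mixing-time fact stated just above the lemma (lazy SRW on $\T_N$ mixes in time $\sim N^2$): since $t \ge n \ge N^2$ for $N$ large, the distribution of $\bfY_t$ is within, say, $1/4$ in total variation of the uniform distribution $\pi$ on $\T_N$, and then a sub-multiplicativity argument over $\lceil t/N^2 \rceil$ mixing windows forces $\|\P_{\mathbf o}[\bfY_t \in \cdot\,] - \pi\|_\infty \le C/N^d$; combined with $\pi(\{\bfx\}) = N^{-d}$ this gives the bound directly. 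I would likely present the heat-kernel/theta-sum proof as the main argument, since it is self-contained given \eqref{e:gaussian_ub_lb}, and only remark on the mixing-time alternative.
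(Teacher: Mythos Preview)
Your proposal is correct and follows essentially the same route as the paper: apply the wrapping identity, use the Gaussian upper bound \eqref{e:gaussian_ub_lb} to reduce to bounding the theta-type sum $\sum_{z}\exp(-c|x'+zN|^2/t)$, and show this sum is $O(t^{d/2}/N^d)$. The only cosmetic difference is that the paper bounds the sum by grouping lattice points into shells $k\sqrt{t}/N \le |x| < (k+1)\sqrt{t}/N$ and summing $(k+1)^{d-1}e^{-ck^2}$, whereas you compare with a Gaussian integral; both yield the same estimate, and your observation that $\delta>2$ makes the additive constant negligible is exactly what the paper's shell argument encodes implicitly (the $k=0$ shell already has volume $\asymp t^{d/2}/N^d \ge 1$).
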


\begin{proof}
Using the Gaussian upper bound, the left-hand side can be bounded by
\begin{equation*}
\begin{split}
    \sum_{x \in \Z^d} p_t(o,xN+\bfx)
  &\le \frac{C}{t^{d/2}} \sum_{x \in \Z^d} \exp \left( - c \frac{|xN + \bfx|^2}{t} \right)
	\le \frac{C}{t^{d/2}} \sum_{x \in \Z^d} \exp \left( - c \frac{|xN|^2}{t} \right) \\
    &\le \frac{C}{t^{d/2}} \sum_{k=0}^{\infty} (k+1)^{d-1} \frac{t^{d/2}}{N^{d}}\exp(-ck^2) \\
    &\le \frac{C}{N^{d}} \sum_{k=0}^{\infty} (k+1)^{d-1} \exp(-ck^2) 
    \le \frac{C}{N^d}.
\end{split}
\end{equation*}
Here we bounded the number of $x$ in $\Z^d$ satisfying $k \sqrt{t}/N \le |x| < (k+1) \sqrt{t}/N$ by $C(k+1)^{d-1} t^{d/2}/ N^{d}$, where $k=0, 1, 2, \dots$.


\end{proof}


%

\subsection{Key propositions}
\label{ssec:key proposition}



In this section we state some propositions to be used in Section \ref{sec:proof-of-main} 
to prove Theorem \ref{thm:main}.
The propositions will be proved in Section \ref{sec:proof of key propositions}.

The strategy of the proof is to consider stretches of length $n$ of the walk, 
and estimate the small probability in each stretch that $\bfK$ is hit by the projection. 
What makes this strategy work is that we can estimate, conditionally on the starting and 
end-points of a stretch, the probability that $\bfK$ is hit, and this event asymptotically 
decouples from the number of stretches.
The number of stretches will be the random variable $S$. Since $n S \approx T$, and $T$ is 
$\Theta(N^d)$ in probability, we have that $S$ is $\Theta(N^d/n)$. In Lemma \ref{lem:E1_prelim} 
below we show a somewhat weaker estimate for $S$ (which suffices for our needs). 

The main part of the proof will be to show that during a fixed stretch, $\bfK$ is not hit 
with probability 
\begin{equation}
\label{e:not-hit-K}
  1 - \frac{1}{2} \cpty(\bfK) \frac{n}{N^d} (1+o(1)). 
\end{equation}
Heuristically, conditionally on $S$ this results in the probability
\[ \left( 1 - \frac{1}{2} \cpty(\bfK) \frac{n}{N^d} (1+o(1)) \right)^{S}
   \approx \exp \left( - \frac{1}{2} \cpty(\bfK) \frac{n}{N^d} S \right), \]
and we will conclude by showing that $(n/N^d) S$ converges in distribution 
to a constant multiple of the Brownian exit time $\sigma_1$.

The factor $n/N^d$ in \eqref{e:not-hit-K} arises as the expected time spent 
by the projected walk at a fixed point of the torus during a given stretch.
The capacity term arises as we pass from expected time to hitting probability.

For the above approach to work, we need a small probability event on which the number of stretches or end-points of stretches are not sufficiently well-behaved.
First, we will need to restrict to realizations where 
$(\sqrt{\log \log n})^{-1} (N^d/n) \le S \le \log N (N^d/n)$, which occurs with high probability as $N \to \infty$ (see Lemma \ref{lem:E1_prelim} below). 
Second, suppose that the $\ell$-th stretch 
starts at the point $y'_{\ell-1}$ and ends at the point $y'_{\ell}$, that is, $y'_{\ell-1}$ and $y'_{\ell}$ are realizations of 
$Y_{(\ell-1) n}$ and $Y_{\ell n}$. In order to have a good estimate of the probability that $\bfK$ is hit during this stretch, we will need to impose a few conditions on $y'_{\ell-1}$ and $y'_{\ell}$. One of these is that the displacement $|y'_{\ell} - y'_{\ell-1}|$ is not too large: we will require that for all stretches, it is at most $f(n) \sqrt{n}$, for a function to be chosen later that increases to infinity with $N$. We will be able to choose $f(n)$ of the form $f(n) = C_1 \sqrt{\log N}$ in such a way that this restriction holds for all stretches with high probability. A third condition we need to impose, that will also hold with high probability, is that $y'_{\ell-1}$ is at least a certain distance $N^\zeta$ from $\varphi^{-1}(\bfK)$ for a parameter $0 < \zeta < 1$ (this will only be required for $\ell \ge 1$, and is not needed for the first stretch starting with $y'_0 = o$). The reason we need this is to be able to appeal to \eqref{e:capacity_ball} to extract the $\cpty(\bfK)$ contribution, when we know that $\bfK$ is hit from a long distance (we will take $r = N^\zeta$ in \eqref{e:capacity_ball}). The larger the value of $\zeta$, the better error bound we get on the approach to $\cpty(\bfK)$. On the other hand, $\zeta$ should not be too close to $1$, because  
we want the separation of $y'_{\ell-1}$ from $\bfK$ to occur with high enough probability.

The set $\mathcal{G}_{\zeta,C_1}$ defined below represents realizations of $S$ and the sequence $Y_{n}, Y_{2n}, \dots, Y_{Sn}$ satisfying the above restrictions. Proposition \ref{prop:good_stretches} below implies that these restrictions hold with high probability. 
First, we will need $2 < \delta < d$ to satisfy the inequality
\begin{equation}
\label{e:choice-delta}
  \delta - \frac{2\delta}{d} > d - \delta 
  \qquad \Leftrightarrow \qquad 
  2\delta > \frac{d^2}{d-1}.
\end{equation}
This can be satisfied if $d \ge 3$ and $\delta$ is sufficiently close to $d$,
say $\delta = \frac{7}{8} d$. 
Since the left-hand side of the left-hand inequality in \eqref{e:choice-delta}
equals $(\delta/d)(d-2)$, we can subsequently choose $\zeta$ such that we also have
\begin{equation}
\label{e:choice-zeta}
  0 < \zeta < \frac{\delta}{d}, \qquad\qquad  
	\zeta(d - 2) > d - \delta. 
\end{equation}

With the parameter $\zeta$ fixed satisfying the above, we now define:
\begin{equation}
\label{e:def_G}
\mathcal{G}_{\zeta,C_1}
= \left\{ \left( \tau, (y_\ell, \mathbf{y}_\ell)_{\ell=1}^\tau \right) : 
   \parbox{7cm}{$(\sqrt{\log \log n})^{-1} \frac{N^d}{n} \le \tau \le \log N \frac{N^d}{n}$; \\
	 $y_\ell \in D$, $\bfy_\ell \in \T_N \setminus B(\mathbf{x_0}, N^{\zeta})$ for
	 $1 \le \ell < \tau$; \\
	 $y_\tau \in D^c$ and $\bfy_\tau \in \T_N \setminus B(\mathbf{x_0}, N^{\zeta})$; \\
	 $|y'_\ell - y'_{\ell-1}| \leq f(n)n^{\frac{1}{2}}$ for 
	 $1 \le \ell \le \tau$} \right\},
\end{equation}
where $f(n) = C_1 \,\sqrt{\log N}$ and recall that 
we write $y'_\ell = y_\ell N + \bfy_\ell$ and $y'_{\ell-1} = y_{\ell-1} N + \bfy_{\ell-1}$,
and we define $y'_0 = o$.
The time $\tau$ is corresponding to a particular value of the exit time $S$, so $y_\ell \in D$
for $1 \le \ell < \tau$ and $y_{\tau} \notin D$. The parameter $C_1$ will be chosen in the course 
of the proof. See Figure \ref{fig:set_G} for a visual illustration of the sequence $y'_0, y'_1, \dots$ in the definition of $\mathcal{G}_{\zeta,C_1}$.

\begin{figure}[htb]%
\begin{center}
\resizebox{0.7\textwidth}{!}{%
\begin{tikzpicture}
\draw (-5,-5) -- (-5,5) -- (5,5) -- (5,-5) -- (-5,-5);
\draw[thick,<-] (-5,-5.3) -- (-0.5,-5.3);
\draw[thick,->] (0.5,-5.3) -- (5,-5.3);
\node[anchor=center] at (0,-5.3) {$L$};
\filldraw[black] (0, 0) circle (1pt);
\node[anchor=west] at (0,0) {$o$};
\draw (-0.5,-0.5) -- (-0.5,0.5) -- (0.5,0.5) -- (0.5,-0.5) -- (-0.5,-0.5);
\filldraw[color=gray, draw=black] (-0.2,0.2) circle(1.5mm);
\draw (-1.5,-0.5) -- (-1.5,0.5) -- (-0.5,0.5) -- (-0.5,-0.5) -- (-1.5,-0.5);
\draw[thick,<->] (-0.5,-1.7) -- (0.5,-1.7);
\node[anchor=north] at (0,-1.7) {$N$};
\filldraw[color=gray, draw=black] (-1.2,0.2) circle(1.5mm);
\draw (-0.5,0.5) -- (-0.5,1.5) -- (0.5,1.5) -- (0.5,0.5) -- (-0.5,0.5);
\filldraw[color=gray, draw=black] (-0.2,1.2) circle(1.5mm);
\draw (-1.5,0.5) -- (-1.5,1.5) -- (-0.5,1.5) -- (-0.5,0.5) -- (-1.5,0.5);
\filldraw[color=gray, draw=black] (-1.2,1.2) circle(1.5mm);
\draw (-0.5,-1.5) -- (-0.5,-0.5) -- (0.5,-0.5) -- (0.5,-1.5) -- (-0.5,-1.5);
\filldraw[color=gray, draw=black] (-0.2,-0.8) circle(1.5mm);
\draw (-1.5,-1.5) -- (-1.5,-0.5) -- (-0.5,-0.5) -- (-0.5,-1.5) -- (-1.5,-1.5);
\filldraw[color=gray, draw=black] (-1.2,-0.8) circle(1.5mm);


\draw (0.5,1.5) -- (0.5,2.5) -- (1.5,2.5) -- (1.5,1.5) -- (0.5,1.5);
\filldraw[color=gray, draw=black] (0.8,2.2) circle(1.5mm);
\draw (0.5,0.5) -- (0.5,1.5) -- (1.5,1.5) -- (1.5,0.5) -- (0.5,0.5);
\filldraw[color=gray, draw=black] (0.8,1.2) circle(1.5mm);
\draw (0.5,-0.5) -- (0.5,0.5) -- (1.5,0.5) -- (1.5,-0.5) -- (0.5,-0.5);
\filldraw[color=gray, draw=black] (0.8,0.2) circle(1.5mm);
\draw (0.5,-1.5) -- (0.5,-0.5) -- (1.5,-0.5) -- (1.5,-1.5) -- (0.5,-1.5);
\filldraw[color=gray, draw=black] (0.8,-0.8) circle(1.5mm);

\draw (1.5,1.5) -- (1.5,2.5) -- (2.5,2.5) -- (2.5,1.5) -- (1.5,1.5);
\filldraw[color=gray, draw=black] (1.8,2.2) circle(1.5mm);
\draw (1.5,0.5) -- (1.5,1.5) -- (2.5,1.5) -- (2.5,0.5) -- (1.5,0.5);
\filldraw[color=gray, draw=black] (1.8,1.2) circle(1.5mm);
\draw (1.5,-0.5) -- (1.5,0.5) -- (2.5,0.5) -- (2.5,-0.5) -- (1.5,-0.5);
\filldraw[color=gray, draw=black] (1.8,0.2) circle(1.5mm);
\draw (1.5,-1.5) -- (1.5,-0.5) -- (2.5,-0.5) -- (2.5,-1.5) -- (1.5,-1.5);
\filldraw[color=gray, draw=black] (1.8,-0.8) circle(1.5mm);

\draw (2.5,-0.5) -- (2.5,0.5) -- (3.5,0.5) -- (3.5,-0.5) -- (2.5,-0.5);
\filldraw[color=gray, draw=black] (2.8,0.2) circle(1.5mm);
\draw (2.5,-1.5) -- (2.5,-0.5) -- (3.5,-0.5) -- (3.5,-1.5) -- (2.5,-1.5);
\filldraw[color=gray, draw=black] (2.8,-0.8) circle(1.5mm);
\draw (1.5,-2.5) -- (1.5,-1.5) -- (2.5,-1.5) -- (2.5,-2.5) -- (1.5,-2.5);
\filldraw[color=gray, draw=black] (2.8,-1.8) circle(1.5mm);
\draw (2.5,-2.5) -- (2.5,-1.5) -- (3.5,-1.5) -- (3.5,-2.5) -- (2.5,-2.5);
\filldraw[color=gray, draw=black] (2.8,-2.8) circle(1.5mm);
\draw (2.5,-3.5) -- (2.5,-2.5) -- (3.5,-2.5) -- (3.5,-3.5) -- (2.5,-3.5);
\filldraw[color=gray, draw=black] (1.8,-1.8) circle(1.5mm);
\draw (3.5,-2.5) -- (3.5,-1.5) -- (4.5,-1.5) -- (4.5,-2.5) -- (2.5,-2.5);
\filldraw[color=gray, draw=black] (3.8,-1.8) circle(1.5mm);
\draw (3.5,-3.5) -- (3.5,-2.5) -- (4.5,-2.5) -- (4.5,-3.5) -- (3.5,-3.5);
\filldraw[color=gray, draw=black] (3.8,-2.8) circle(1.5mm);

\filldraw[black] (1.2, 2.0) circle (1pt);
\node[anchor=north] at (1.2, 2.0) {$y'_1$};
\draw[thick] (0,0) .. controls (0.2,0.4) and (0.3,0.5) .. (0.5,0.6)
	  (0.5,0.6) .. controls (0.9,0.75) and (0.75,1.1) .. (0.8,1.2)
	  (0.8,1.2) .. controls (0.9,1.4) and (1.0,1.9) .. (1.2,2.0);

\filldraw[black] (2.2, -0.1) circle (1pt);
\node[anchor=east] at (2.2, -0.2) {$y'_2$};
\draw[thick] (1.2,2.0) .. controls (1.5,2.2) and (1.6,1.9) .. (1.7,1.8)
(1.7,1.8) .. controls (1.8,1.6) and (1.9,1.7) .. (2.0,1.5)
	  (2.0,1.5) .. controls (2.1,1.2) and (2.3,1.0) .. (2.2,0.8)
	  (2.2,0.8) .. controls (2.0,0.6) and (2.4,0.3) .. (2.2,-0.1);

\filldraw[black] (3.2, -2.1) circle (1pt);
\node[anchor=east] at (3.2, -2.2) {$y'_3$};
\draw[thick] (2.2,-0.1) .. controls (2.0,-0.3) and (2.1,-0.5) .. (2.2,-0.7)
(2.2,-0.7) .. controls (2.4,-0.9) and (2.3,-1.0) .. (2.5,-1.2)
(2.5,-1.2) .. controls (2.8,-1.4) and (3.0,-1.3) .. (3.2,-1.5)
(3.2,-1.5) .. controls (3.3,-1.7) and (3.1,-1.9) .. (3.2,-2.1);

\filldraw[black] (5.5, -3.0) circle (1pt);
\node[anchor=west] at (5.5, -3.0) {$y'_4 = y'_{\tau}$};
\draw[thick] (3.2,-2.1) .. controls (3.4,-2.4) and (3.2,-2.6) .. (3.3,-2.8)
(3.3,-2.8) .. controls (3.5,-3.0) and (3.7,-2.8) .. (4.0,-3.1)
(4.0,-3.1) .. controls (4.2,-3.2) and (4.5,-2.8) .. (4.8,-3.1)
(4.8,-3.1) .. controls (5.1,-3.3) and (5.3,-2.9) .. (5.5,-3.0);

\end{tikzpicture}
}
\caption{This figure explains the properties of the set $\mathcal{G}_{\zeta,C_1}$ (not to scale). The shaded regions represent the balls of radius $N^\zeta$ in each copy of the torus.
None of the $y'_\ell$'s, for $\ell \ge 1$, is in a shaded region.}%
\label{fig:set_G}%
\end{center}
\end{figure}
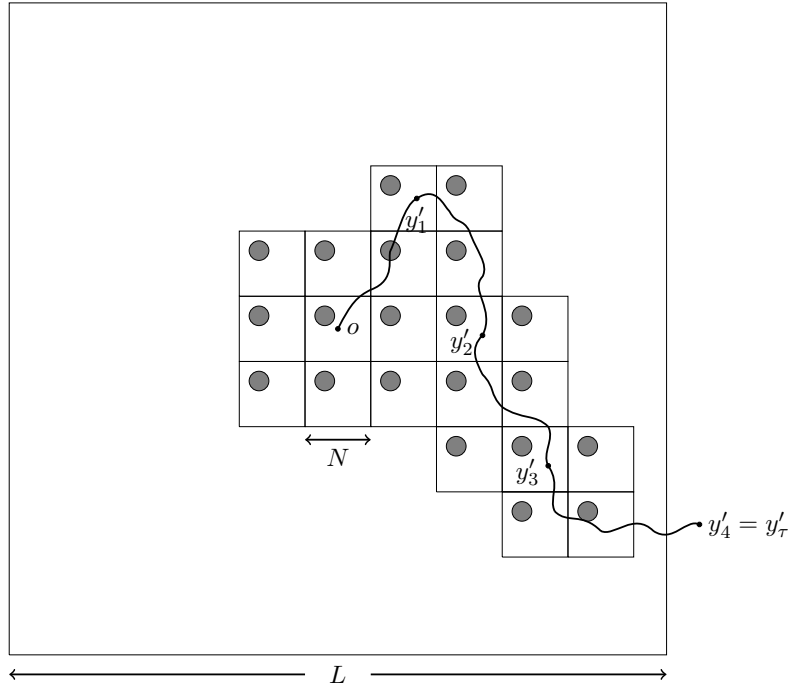

The next lemma shows that the restriction made on the time-parameter $\tau$ in the definition of $\mathcal{G}_{\zeta,C_1}$ holds with high probability for $S$.

\begin{lemma}
\label{lem:E1_prelim}
We have $\mathbf{P}_{o} \left[ \left\{\frac{S n}{N^d} < (\sqrt{\log\log n})^{-1} \right\} \cup \left\{\frac{S n}{N^d} > \log N \right\}\right] \to 0$ as $N \to \infty$.
\end{lemma}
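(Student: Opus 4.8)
The plan is to show that $(n/N^d) S$ concentrates around the constant $d A \sigma_1$ (or rather, that it stays in the interval $((\sqrt{\log\log n})^{-1}, \log N)$ with probability tending to $1$), so it suffices to establish two one-sided bounds. Recall $Sn$ is essentially the exit time $T$ of $(Y_t)$ from $(-L,L)^d$, and $L^2 \sim A N^d$; by the invariance principle, $T/L^2 \Rightarrow \sigma_1 \cdot (2d)$ — more precisely, the rescaled walk $Y_{\lfloor L^2 s \rfloor}/L$ converges to a Brownian motion run at speed $(2d)^{-1}$ (since $\sigma^2 = (2d)^{-1}$ per coordinate for the lazy walk), so $T/L^2$ converges in distribution to the exit time of a standard Brownian motion from $(-1,1)^d$ slowed appropriately, i.e.\ to $d A \sigma_1 \cdot (n/N^d)^{-1}$ up to the $L^2 \sim A N^d$ identification. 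Hence $(n/N^d)S = (n/N^d)(T/n + O(1)) = T/N^d + o(1) \Rightarrow dA\sigma_1 \cdot A^{-1} \cdot \text{(const)}$, a finite, a.s.-positive random variable. Since the limit is a.s.\ in $(0,\infty)$, and both $(\sqrt{\log\log n})^{-1} \to 0$ and $\log N \to \infty$, the probability that $(n/N^d)S$ falls outside $[(\sqrt{\log\log n})^{-1}, \log N]$ tends to $0$. That is the skeleton.

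Concretely I would argue the two tails separately without invoking a full invariance principle, to keep the error terms self-contained. For the \emph{upper} tail $\{(n/N^d) S > \log N\}$: on this event $T \ge Sn \cdot (1 - o(1))$ so $T > \tfrac12 N^d \log N$ for large $N$, i.e.\ the walk has not exited $(-L,L)^d$ by time $t_0 := \tfrac12 N^d \log N$. Since $L^2 \sim A N^d$, we have $t_0 \gg L^2$, so this is a large-deviation event for the exit time. Bounding it: by the reflection/Martingale maximal inequality \eqref{e:maximal_ineq} applied to a single coordinate, $\P_o[\max_{0 \le j \le t_0} Y^{(i)}_j < L] $ — actually we want the walk to \emph{stay} inside, so we use that $\P_o[\xi_{(-L,L)^d} > t_0]$ decays: split $[0,t_0]$ into $\asymp \log N$ blocks of length $N^d$, and in each block, conditionally on the start, the coordinate has a constant probability bounded away from $0$ (using the Gaussian lower bound \eqref{e:gaussian_ub_lb}, since the displacement needed, of order $L \asymp N^{d/2}$, is $\le c \cdot N^d$) of moving by more than $2L$ in, say, the first coordinate and thereby forcing an exit. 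Hence $\P_o[T > t_0] \le (1-c)^{c\log N} \to 0$.

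For the \emph{lower} tail $\{(n/N^d)S < (\sqrt{\log\log n})^{-1}\}$: on this event $Sn < N^d (\sqrt{\log\log n})^{-1}$, and since $T \le (S+1)n$, we get $T \le t_1 := 2 N^d(\sqrt{\log\log n})^{-1}$, so the walk exits $(-L,L)^d$ by time $t_1 \ll N^d \asymp L^2 / A$. This is the event that the walk travels distance $\ge L$ in time $t_1 = o(L^2)$, a moderate-deviation event. By the union bound over the $2d$ faces and the Martingale maximal inequality \eqref{e:maximal_ineq}: $\P_o[\max_{0\le j \le t_1} Y^{(i)}_j \ge L] \le e^{-r^2/2}\exp\{O(r^3/\sqrt{t_1})\}$ with $r = L/(\sigma\sqrt{t_1}) \asymp \sqrt{\log\log n} \to \infty$, while $r^3/\sqrt{t_1} \asymp (\log\log n)^{3/2} / (N^{d/2}(\log\log n)^{-1/4}) \to 0$; so each face contributes $\le \exp(-c\log\log n) = (\log n)^{-c} \to 0$, and summing over $2d$ faces still gives $o(1)$.

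The main obstacle I anticipate is making the upper-tail estimate clean: one must verify that in a length-$N^d$ block the walk genuinely has probability bounded below (uniformly in the starting point inside the box) of exiting, which requires the Gaussian lower bound to apply at the relevant scale and a little care that the walk does not get ``stuck'' near a corner; alternatively one can phrase it via $\P_o[\xi_{B_L(o)} > N^d] \le 1 - c$ using that the walk at time $N^d$ is spread over a region of diameter $\asymp N^{d/2} \gg L$, so it lands outside $B_L(o)$ with probability bounded below — this is the cleanest route and I would use it, invoking \eqref{e:gaussian_ub_lb} to sum $p_{N^d}(o, y')$ over $y' \in B_L(o)$ and see the total mass there is $\le C (L/N^{d/2})^d = C A^{d/2} < 1$ after possibly enlarging the block length by a constant factor.
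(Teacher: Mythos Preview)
Your approach matches the paper's for both tails, but the upper-tail argument contains a logical slip that you should fix.

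For the lower tail you and the paper do the same thing: use $T \le nS$ to reduce $\{Sn/N^d < (\sqrt{\log\log n})^{-1}\}$ to $\{T < (\sqrt{\log\log n})^{-1} N^d\}$, then apply the maximal inequality \eqref{e:maximal_ineq} coordinatewise. Your arithmetic drifts: with $t_1 \asymp N^d/\sqrt{\log\log n}$ and $L^2 \sim A N^d$ one gets $r^2 = L^2/(\sigma^2 t_1) \asymp \sqrt{\log\log n}$, so $r \asymp (\log\log n)^{1/4}$ and $e^{-r^2/2} = \exp(-c\sqrt{\log\log n})$, not $(\log n)^{-c}$ as you write. The conclusion $\to 0$ is unaffected.

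For the upper tail you assert ``on this event $T \ge Sn(1-o(1))$'' and then bound $\P_o[T > t_0]$. This inequality is backwards: from $Y_{nS}\notin(-L,L)^d$ we only get $T \le nS$, and the walk may exit and return between multiples of $n$, so $\{S > k\}$ does \emph{not} imply $\{T > kn\}$. Bounding $\P_o[T>t_0]$ therefore does not control $\P_o[S > t_0/n]$. The fix is to run your block argument on $S$ directly, which is exactly what the paper does: take blocks of length $n\lfloor N^d/n\rfloor$ (a multiple of $n$), and note that from any start $z\in(-L,L)^d$, after one block the walk is outside $(-2L,2L)^d$ with probability at least some $c>0$ (by the CLT, or your Gaussian-lower-bound alternative). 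At that endpoint time --- which is a multiple of $n$ --- the walk is then outside $(-L,L)^d$, so $S$ is at most that index. Iterating gives $\P_o[S > k\lfloor N^d/n\rfloor] \le (1-c)^k$, and $k=\log N$ finishes. Your underlying idea is right; only the routing through $T$ is wrong.
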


\begin{proof}
By the definitions of $S$ and $T$, we first notice that 
\begin{equation*}
\begin{split}
&\P_{o}\left[S < (\sqrt{\log\log n})^{-1}\frac{N^d}{n} \right] \\
&\quad\leq \P_{o}\left[ T < (\sqrt{\log\log n})^{-1} N^d \right] \\
&\quad\leq \sum_{1\leq i\leq d}
	\left(\P_{o}\left[\max_{0\leq j \leq (\sqrt{\log\log n})^{-1}N^d} Y^{(i)}_{j}\geq L\right]
	+\P_{o}\left[\max_{0\leq j \leq (\sqrt{\log\log n})^{-1}N^d} -Y^{(i)}_{j}\geq L\right]\right),
\end{split}
\end{equation*}
where $Y^{(i)}$ denotes the $i$-th coordinate of the $d$-dimensional lazy random walk.


We are going to use \eqref{e:maximal_ineq}. 
Setting $t= (\sqrt{\log\log n})^{-1} N^d$ and $r\sigma\sqrt{t} = L$, we can evaluate each term 
(similarly for the event with $-Y^{(i)}_j$) in the sum
\begin{equation}
\label{e:maximal_exp}
\begin{split}
 &\P_{o}\left[\max_{0\leq j \leq (\sqrt{\log\log n})^{-1}N^d} Y^{(i)}_{j}\geq L\right] \\
 &\quad\leq \exp\left\{-\frac{1}{2}\frac{L^2}{\sigma^2(\sqrt{\log\log n})^{-1} N^d}
	 + O\left(\frac{L^3}{\sigma^3(\sqrt{\log\log n})^{-2}N^{2d}}\right)\right\}.
\end{split}
\end{equation}
Recall that $L^2 \sim A N^d$ and $\sigma^2 = 1/2d$. For the main term in the exponential in \eqref{e:maximal_exp}, we have the upper bound
\begin{equation*}
\begin{split}
\exp\left( -(1+o(1)) \frac{1}{2}\frac{2d\cdot A N^d}{ (\sqrt{\log\log n})^{-1} N^d}\right)
&= \exp\left(-(1+o(1)) Ad\sqrt{\log\log n}\right)\\
&\to 0, \quad\text{as $N\to\infty$}.
\end{split}
\end{equation*}
The big $O$ term in the exponential in \eqref{e:maximal_exp} produces an error term because
\begin{equation*}
\begin{split}
 \exp\left\{O\left(\frac{(A N^d)^{3/2}}{\sigma^3(\sqrt{\log\log n})^{-2}N^{2d}}\right)\right\}
 &= \exp \left\{O\left(N^{-d/2}(\log\log n)\right)\right\}\\
 &= 1+o(1), \quad\text{as $N\to\infty$.}
\end{split}
\end{equation*}

%
%
%

Coming to the second event $\left\{\frac{S n}{N^d} > \log N \right\}$, 
observe that the Central Limit Theorem applied to $(Y_{k n \lfloor N^d/n \rfloor) + t} - Y_{k n \lfloor N^d/n \rfloor})_{t \ge 0}$
implies that 
\[ \P_o \Big[ S > (k+1) \Big\lfloor \frac{N^d}{n} \Big\rfloor \,\Big|\, S > k \Big\lfloor \frac{N^d}{n} \Big\rfloor \Big]
   \le \max_{z \in (-L,L)^d}  (1 - \P_z [ Y_{n \lfloor N^d/n \rfloor} \not\in (-2L,2L)^d ])
	\le 1-c \]
for some $c > 0$. Hence we have 
$\P_{o}\left[S > k\frac{N^d}{n}\right]\leq e^{-c k}$ for all $k \ge 0$.

Applying this with $k=\log N$, 
\begin{equation*}
  \P_{o}\left[S > \log N\frac{N^d}{n}\right] 
  \leq e^{-c \log N} \to 0   
\end{equation*}
as required.

\end{proof}



The starting point for the proof of Theorem \ref{thm:main} is the following proposition that decomposes 
the probability we are interested in into terms involving single stretches of 
duration $n$.

\begin{proposition}
\label{prop:good_stretches}
For a sufficiently large value of $C_1$, we have that 
\begin{equation}
\label{e:high-prob}
\P_o \left[ \left(S, (Y_{\ell n}, \varphi(Y_{\ell n})_{\ell=1}^S \right) \not\in \mathcal{G}_{\zeta,C_1} \right]
   = o(1) \quad \text{as $N \to \infty$}.
\end{equation}
Furthermore, 
\begin{equation}
\label{e:good_stretches}
\begin{split}
 &\mathbf{P}_{o} \left[ Y_t \not\in \varphi^{-1}(\mathbf{K}),\, 0 \le t < T \right] \\
 &\quad = \sum_{( \tau, (y_\ell, \mathbf{y}_\ell)_{\ell=1}^\tau ) \in \mathcal{G}_{\zeta,C_1}}
   \prod_{\ell=1}^\tau 
	 \mathbf{P}_{y'_{\ell-1}} 
	 \left[ Y_n = y'_\ell,\, \text{$Y_t \not\in \varphi^{-1}(\mathbf{K})$ for 
	 $0 \le t < n$} \right]
	 + o(1), 
\end{split}
\end{equation}
where $o(1) \to 0$ as $N \to \infty$.
\end{proposition}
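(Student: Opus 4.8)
The plan is to prove Proposition \ref{prop:good_stretches} in two parts, corresponding to its two displayed equations.

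\medskip

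\noindent\textbf{Part 1: the high-probability estimate \eqref{e:high-prob}.}
I would bound the probability of the complement of $\mathcal{G}_{\zeta,C_1}$ by a union bound over the three types of ``bad'' behaviour that define membership in $\mathcal{G}_{\zeta,C_1}$. First, the event that the time parameter $S$ falls outside $[(\sqrt{\log\log n})^{-1} N^d/n,\ \log N\, N^d/n]$ has probability $o(1)$ by Lemma \ref{lem:E1_prelim}. Second, I would control the event that some displacement $|y'_\ell - y'_{\ell-1}| = |Y_{\ell n} - Y_{(\ell-1)n}|$ exceeds $f(n)\sqrt{n} = C_1\sqrt{\log N}\sqrt{n}$ for some $1 \le \ell \le S$; on the event $S \le \log N\, N^d/n$ there are at most $\log N\, N^d/n$ stretches, so a union bound over $\ell$ together with the Gaussian upper bound \eqref{e:gaussian_ub_lb} (or the maximal inequality \eqref{e:maximal_ineq} applied coordinatewise over a single stretch) gives a bound like $(\log N\, N^d/n)\cdot C\exp(-c\,C_1^2 \log N)$, which is $o(1)$ once $C_1$ is chosen large enough depending on $d$. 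Third, I would control the event that some $\bfy_\ell = \varphi(Y_{\ell n})$ falls in $B(\mathbf{x_0}, N^\zeta)$ for some $1 \le \ell \le S$: by Lemma \ref{lem:mixing_property}, $\P_{\mathbf o}[\bfY_t = \bfx] \le C/N^d$ for $t \ge n$, so summing over $\bfx \in B(\mathbf{x_0}, N^\zeta)$ gives $\P[\bfy_\ell \in B(\mathbf{x_0}, N^\zeta)] \le C N^{\zeta d}/N^d = C N^{(\zeta-1)d}$ for each fixed $\ell \ge 1$; a union bound over the at most $\log N\, N^d/n = O(\log N\, N^{d-\delta})$ stretches gives $O(\log N\, N^{d-\delta + (\zeta-1)d}) = O(\log N\, N^{\zeta d - \delta})$, which is $o(1)$ because $\zeta < \delta/d$ by the first inequality in \eqref{e:choice-zeta}. (The condition $\bfy_\tau \notin B(\mathbf{x_0}, N^\zeta)$ is handled the same way, using that on $\{S \le \log N\, N^d/n\}$ the index $\tau = S$ is among those considered.) Combining the three pieces gives \eqref{e:high-prob}.

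\medskip

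\noindent\textbf{Part 2: the decomposition \eqref{e:good_stretches}.}
The key observation is that the trace of the walk on $(-L,L)^d$ up to time $T$ is determined by the walk up to time $Sn$, \emph{except} for the final partial stretch from time $Sn$ until $T$; but since we only ask whether $\varphi^{-1}(\bfK)$ is avoided, and $\varphi^{-1}(\bfK) \subset (-L,L)^d$ would require the walk to re-enter the box — more carefully, by definition of $S$ we have $Y_{Sn} \notin (-L,L)^d$, hence $T \le Sn$, so in fact $\{Y_t \notin \varphi^{-1}(\bfK),\ 0 \le t < T\} \supseteq \{Y_t \notin \varphi^{-1}(\bfK),\ 0 \le t < Sn\}$, and conversely the contribution of times in $[T, Sn)$ is negligible because $\varphi^{-1}(\bfK)\cap(-L,L)^d$ is what matters and can be absorbed into the $o(1)$; I would make this precise by noting $T = Sn$ up to a single extra stretch whose hitting probability is $O(n/N^d) = o(1)$. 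Granting this, I condition on $(S, (Y_{\ell n}, \varphi(Y_{\ell n}))_{\ell=1}^S)$ and use the Markov property at the times $n, 2n, \dots$: the event $\{Y_t \notin \varphi^{-1}(\bfK),\ 0 \le t < Sn\}$ factorizes across stretches as $\prod_{\ell=1}^S \mathbf{1}[Y_t \notin \varphi^{-1}(\bfK)\text{ for }(\ell-1)n \le t < \ell n]$, and the Markov property at times $\ell n$ turns the conditional probability into the product $\prod_{\ell=1}^\tau \P_{y'_{\ell-1}}[Y_n = y'_\ell,\ Y_t \notin \varphi^{-1}(\bfK)\text{ for }0 \le t < n]$ shown on the right-hand side of \eqref{e:good_stretches}. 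Summing over all realizations $(\tau, (y_\ell, \bfy_\ell)_{\ell=1}^\tau) \in \mathcal{G}_{\zeta,C_1}$ and adding back the contribution of realizations not in $\mathcal{G}_{\zeta,C_1}$ — which by \eqref{e:high-prob} is $o(1)$ — yields \eqref{e:good_stretches}.

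\medskip

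\noindent\textbf{Main obstacle.}
The routine parts are the union bounds in Part 1 and the Markov-property bookkeeping in Part 2. The delicate point I expect to require the most care is the interface between $T$ and $Sn$ in Part 2: one must check that replacing the stopping time $T$ (the true exit time from $(-L,L)^d$) by the ``rounded'' time $Sn$ changes the avoidance probability only by $o(1)$, uniformly in $K$ and $\bfx$. This needs that a single stretch of length $n$, started from anywhere, hits $\bfK$ with probability only $O(\cpty(\bfK)\, n/N^d) = O(n/N^d) = o(1)$ — which is precisely the quantitative estimate \eqref{e:not-hit-K} that the subsequent propositions establish, so there is a mild circularity to be avoided by using only a crude one-sided bound here (e.g.\ via Lemma \ref{lem:mixing_property}: the expected number of visits of $\bfY$ to any fixed torus point during one stretch is $O(n/N^d)$, hence the hitting probability of the bounded set $\bfK$ is $O(n/N^d)$, with no capacity needed). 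Getting this crude bound cleanly, and making sure all error terms are uniform in $K \in \mathcal{K}_R$ and in $\bfx$, is the part that warrants the most attention.
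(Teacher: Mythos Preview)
Your Part 1 and the Markov-property bookkeeping in Part 2 match the paper's approach (the paper packages the bad events as error terms $E_1,E_2,E_4$, handled by Lemma~\ref{lem:E1_prelim} and Lemmas~\ref{lem:E_2},~\ref{lem:E_4}, exactly along the lines you sketch).

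The genuine gap is in your handling of the $T$-to-$Sn$ interface, and it is more serious than a matter of care. Your claim that ``$T = Sn$ up to a single extra stretch'' is false: $S$ is defined as the first $\ell$ with $Y_{n\ell}\notin(-L,L)^d$, but after the walk exits at time $T$ it can re-enter the box, and $Y_{n\ell}$ may lie in $(-L,L)^d$ for many subsequent $\ell$. There is no a priori bound of the form $Sn - T \le n$. The paper treats this as a separate error term $E_3 = \P_o[\exists\, t\in[T,Sn): Y_t\in\varphi^{-1}(\bfK)]$ and devotes an entire lemma (Lemma~\ref{lem:E_3}) to it. The argument there has two ingredients you did not anticipate: first, a martingale/optional-stopping argument on the exiting coordinate to show that with high probability only $N^{\eps_1}$ extra stretches are needed after $T$ before some $Y_{n\ell}$ lies outside $(-L,L)^d$; second, a boundary Poisson-kernel estimate to show that $\varphi(Y_T)$ is far from $\bfK$ with high probability, so that the Green's-function bound of Lemma~\ref{lem:Green-bnd} controls hits in the first couple of stretches after $T$, after which your mixing argument does take over.

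Your crude bound via Lemma~\ref{lem:mixing_property} is the right tool for the \emph{later} post-$T$ stretches, but it does not by itself control the number of such stretches, nor the first stretch (where the torus location of $Y_T$ is not uniform and could in principle be near $\bfK$). So while you correctly flagged this as the main obstacle, the specific mechanism you propose would not close the argument.
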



Central to the proof of Theorem \ref{thm:main} is the following proposition, that estimates the 
probability of hitting a copy of $\bfK$ during a ``good stretch'' where the displacement 
$|y'_\ell - y'_{\ell-1}|$ is almost of order $\sqrt{n}$. This will
not hold for all stretches with high probability, but the fraction of stretches for which 
it fails will vanish asymptotically.

\begin{proposition}
\label{prop:capacity_ub}
There exists a sufficiently large value of $C_1$ such that the following holds. 
Let $( \tau, (y_\ell, \mathbf{y}_\ell)_{\ell=1}^\tau ) \in \mathcal{G}_{\zeta,C_1}$.
Then for all $2 \le \ell \le \tau$ such that 
$|y'_\ell - y'_{\ell-1}| \le 10\sqrt{n}\log\log n$ we have
\begin{equation}
\label{e:capacity_ub}
\begin{split}
 &\mathbf{P}_{y'_{\ell-1}} 
	 \left[ Y_n = y'_\ell,\, \text{$Y_t \not\in \varphi^{-1}(\mathbf{K})$ for 
	 $0 \le t < n$} \right] \\
 &\qquad = \mathbf{P}_{y'_{\ell-1}} 
	 \left[ Y_n = y'_\ell \right] \, 
	 \left( 1 - \frac{1}{2} \, \frac{\cpty (\mathbf{K}) \, n}{N^d} \, (1 + o(1)) \right).
\end{split}
\end{equation}
\end{proposition}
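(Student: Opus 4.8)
The plan is to estimate the probability of hitting a copy of $\mathbf{K}$ during a single ``good stretch'', conditioned on its start- and end-points, via a first-and-second-moment / inclusion–exclusion argument over the times the walk visits the translates of $\mathbf{K}$ inside $(-L,L)^d$. First I would observe that
\begin{equation*}
\begin{split}
 &\mathbf{P}_{y'_{\ell-1}} \left[ Y_n = y'_\ell,\, Y_t \not\in \varphi^{-1}(\mathbf{K}) \text{ for } 0 \le t < n \right] \\
 &\qquad = \mathbf{P}_{y'_{\ell-1}} \left[ Y_n = y'_\ell \right]
   - \mathbf{P}_{y'_{\ell-1}} \left[ Y_n = y'_\ell,\, H_{\varphi^{-1}(\mathbf{K})} < n \right],
\end{split}
\end{equation*}
so the task reduces to showing that the subtracted term equals $\mathbf{P}_{y'_{\ell-1}}[Y_n = y'_\ell] \cdot \frac12 \cpty(\mathbf{K}) \frac{n}{N^d}(1+o(1))$. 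Writing $\varphi^{-1}(\mathbf{K}) = \bigcup_{z} (zN + \mathbf{K})$ over the $z$ for which this copy meets $(-L,L)^d$ (there are $\Theta(L^d/N^d) = \Theta(m^d)$ of them), I would decompose the hitting event according to the \emph{first} copy $zN+\mathbf{K}$ that is hit, and according to the location $w'$ on that copy where it is first hit.

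The key decoupling step is to split the bridge probability at the first hitting time: using the LCLT and the restrictions $|y'_\ell - y'_{\ell-1}| \le 10\sqrt n \log\log n$, $y'_{\ell-1},y'_\ell \notin B(\mathbf x_0, N^\zeta)$ in the copies, one has for a first-hit location $w'$ at time $s < n$ that
\[ \mathbf{P}_{y'_{\ell-1}}\left[ Y_n = y'_\ell \mid \text{first hit of } \varphi^{-1}(\mathbf K) \text{ is } w' \text{ at time } s \right] = \bar p_{n-s}(y'_\ell - w')(1+o(1)), \]
and since the displacement is $O(\sqrt n \log\log n)$ and $n = \lfloor N^\delta\rfloor$ with $\delta$ close to $d$, one can show $\bar p_{n-s}(y'_\ell - w') = \bar p_n(y'_\ell - y'_{\ell-1})(1+o(1)) = \mathbf{P}_{y'_{\ell-1}}[Y_n = y'_\ell](1+o(1))$ uniformly over the relevant $w'$ and $s$ — this is where the condition $|y'_\ell - y'_{\ell-1}| \le 10\sqrt n\log\log n$ is used, and where the Gaussian bounds \eqref{e:gaussian_ub_lb} control the regime $s$ close to $n$. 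Hence the subtracted term factors as $\mathbf{P}_{y'_{\ell-1}}[Y_n=y'_\ell](1+o(1))$ times $\mathbf{P}_{y'_{\ell-1}}[H_{\varphi^{-1}(\mathbf K)} < n]$, and it remains to evaluate this hitting probability.

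For the hitting probability I would argue that, since $y'_{\ell-1}$ is at distance $\ge N^\zeta$ from the copies of $\mathbf K$ and $\zeta < \delta/d$ so $N^\zeta = o(\sqrt n)$, the walk reaches the neighbourhood of any particular copy ``from far away''; by \eqref{e:capacity_ball} with $r = N^\zeta$, each individual copy $zN+\mathbf K$ is hit before exiting its radius-$N^\zeta$ ball with probability $\tfrac12\cpty(\mathbf K)(1+o(1))$ times the probability of \emph{reaching} that ball, and the probability of reaching the ball around the $z$-th copy, summed and integrated against the transition density, produces exactly $\frac{n}{N^d}\times(\text{number of copies})$ up to $1+o(1)$ — intuitively because $\sum_{z} \sum_{t<n} p_t(y'_{\ell-1}, zN+\mathbf x_0) \approx \frac{n}{N^d}\cdot\#\{z\}$, reflecting that the projected walk spends expected time $n/N^d$ per torus point per stretch, and $\#\{z\} \cdot N^d \asymp L^d$, while multiple visits to distinct copies contribute only lower order by a union bound using the Green's function decay $|z|^{2-d}$ across copies. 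Putting $\tfrac12\cpty(\mathbf K)$ together with the $n/N^d$ factor gives \eqref{e:capacity_ub}.

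The main obstacle I expect is the uniformity and the error control in the hitting-probability computation: one must simultaneously (i) handle the overlap/multiple-visit corrections across the $\Theta(m^d)$ distinct copies of $\mathbf K$ without the errors accumulating (the number of copies grows, so each per-copy error must be $o(N^d/(n m^d)) = o(1/m^d \cdot N^{d-\delta})$, which is exactly what \eqref{e:choice-zeta} and \eqref{e:choice-delta} are engineered to guarantee), and (ii) make the passage from ``hits the copy'' to ``$\tfrac12\cpty(\mathbf K)\times$ reaches the ball'' uniform in $z$ and in the start point $y'_{\ell-1}$, which requires the quantitative form of \eqref{e:capacity_ball} together with the LCLT error term to show the walk enters the $N^\zeta$-ball with a nearly harmonic-measure profile. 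The constraint $\zeta(d-2) > d - \delta$ in \eqref{e:choice-zeta} is precisely the budget that lets the $r=N^\zeta$ approximation in \eqref{e:capacity_ball} beat the number of copies; verifying this balance carefully is the crux of the argument.
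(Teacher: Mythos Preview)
Your outline has the right architecture (Bonferroni over copies, extract $\tfrac12\cpty(\mathbf K)$ via \eqref{e:capacity_ball}, control double hits), but the central factorisation step as you state it does not hold. You claim
\[
\bar p_{n-s}(y'_\ell-w')=\bar p_n(y'_\ell-y'_{\ell-1})\,(1+o(1))
\]
uniformly over the relevant $(s,w')$. This is false: the first hitting time $s$ is typically a positive fraction of $n$, and then already the prefactor $(n-s)^{-d/2}$ differs from $n^{-d/2}$ by a bounded constant, not by $1+o(1)$. Likewise $w'$ sits in a copy at distance $\asymp\sqrt{s}$ from $y'_{\ell-1}$, so the exponent does not match either. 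Consequently the step ``the subtracted term factors as $\P_{y'_{\ell-1}}[Y_n=y'_\ell](1+o(1))\cdot \P_{y'_{\ell-1}}[H_{\varphi^{-1}(\mathbf K)}<n]$'' is not justified, and the subsequent unconditional hitting-probability computation does not give the conditional quantity you need. (A side issue: in a single stretch the walk only sees $\asymp (\sqrt n/N)^d$ copies, not the $\Theta(m^d)$ copies in the full box; your heuristic $\sum_z\sum_{t<n}p_t\approx \tfrac{n}{N^d}\cdot\#\{z\}$ is off for the same reason.)

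The paper avoids this obstruction by a different decomposition and an averaging trick. For each copy $x$ it records the \emph{last} visit to $\partial B(\mathbf x_0+xN,N^\zeta)$ at time $n_1$, the subsequent hit of $\mathbf K+xN$ at time $n_1+n_2$, and the remaining time $n_3$; on the main event $n_2\le N^{2\delta/d}$ is negligible. The key identity is then not $p_{n_3}(x',y'_\ell)\approx p_n(y'_{\ell-1},y'_\ell)$, but rather the cell-uniformity
\[
p_{n_1}(y'_{\ell-1},z')=(1+o(1))\,p_{n_1}(y'_{\ell-1},u'),\qquad
p_{n_3}(x',y'_\ell)=(1+o(1))\,p_{n_3}(u',y'_\ell)
\]
for \emph{any} $u'\in\T_N+xN$ (valid because $|z'-u'|,|x'-u'|\le N=o(\sqrt{\eps_n n})$). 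Inserting $\frac{1}{N^d}\sum_{u'\in\T_N+xN}$ and then summing over the nearby copies $x$ reconstitutes the honest convolution $p_{n_1+n_3}(y'_{\ell-1},y'_\ell)$, which \emph{is} $(1+o(1))p_n(y'_{\ell-1},y'_\ell)$ since $n_2=o(n)$. The $n/N^d$ factor then arises as (number of choices of $n_1$)$\times N^{-d}$, and the remaining sum $\sum_{n_2}\sum_{z'\in\partial B}\P_{z'}[H_{\mathbf K+xN}=n_2<\xi_B]$ is exactly the left side of \eqref{e:capacity_ball}. If you want to repair your first-hit route, you would need an analogue of this cell-averaging step rather than a direct comparison of $p_{n-s}$ with $p_n$.
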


In addition to the above proposition (that we prove in Section \ref{ssec:capacity_ub}), 
we will need a weaker version for the remaining ``bad stretches'' that have less restriction 
on the distance $|y'_\ell - y'_{\ell-1}|$. This will be needed to estimate error terms arising 
from the ``bad stretches'', and it will also be useful to demonstrate some of our proof ideas 
for other error terms arising later in the paper. It will be proved in 
Section \ref{ssec:weaker_capacity_ub}.


\begin{proposition}
\label{prop:weaker_capacity_ub}
Let $( \tau, (y_\ell, \mathbf{y}_\ell)_{\ell=1}^\tau ) \in \mathcal{G}_{\zeta,C_1}$.
For all $2 \le \ell \le \tau$ we have
\begin{equation}
\label{e:weaker_capacity-ub}
 \P_{y'_{\ell-1}} 
	 \left[ Y_n = y'_\ell,\, \text{$Y_t \not\in \varphi^{-1}(\mathbf{K})$ for all
	 $0 \le t < n$} \right] 
 = \P_{y'_{\ell-1}} 
	 \left[ Y_n = y'_\ell \right] \, 
	 \left( 1 - O\left(\frac{n}{N^d}\right) \right).
\end{equation}
and for the first stretch we have
\begin{equation}
\label{e:weaker_capacity-ub-o}
 \P_{o} 
	 \left[ Y_n = y'_1,\, \text{$Y_t \not\in \varphi^{-1}(\mathbf{K})$ for all
	 $0 \le t < n$} \right] 
 = \P_{o} 
	 \left[ Y_n = y'_1 \right] \, 
	 \left( 1 - o(1) \right).
\end{equation}
Here the $-O(n/N^d)$ and $-o(1)$ error terms are negative.
\end{proposition}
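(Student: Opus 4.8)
The parenthetical ``the error terms are negative'' is just the trivial bound $\P_{y'_{\ell-1}}[Y_n = y'_\ell,\ Y_t \notin \varphi^{-1}(\mathbf{K})\text{ for }0 \le t < n] \le \P_{y'_{\ell-1}}[Y_n = y'_\ell]$, so all the content is the matching lower bound; writing $H$ for the first time the walk enters $\varphi^{-1}(\mathbf{K})$, this amounts to
\[
 \P_{y'_{\ell-1}}[Y_n = y'_\ell,\ H < n] \le O(n/N^d)\,\P_{y'_{\ell-1}}[Y_n = y'_\ell] \quad(\ell \ge 2),
\]
and similarly with $o(1)$ in place of $O(n/N^d)$ for the first stretch. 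The plan is to bound the left-hand side by a union over the hitting time and location (equivalently, using $\mathbf 1[H<n] \le \#\{0 \le t < n : Y_t \in \varphi^{-1}(\mathbf{K})\}$ and the Markov property):
\[
 \P_{y'_{\ell-1}}[Y_n = y'_\ell,\ H < n] \le \sum_{t=0}^{n-1}\sum_{w' \in \varphi^{-1}(\mathbf{K})} p_t(y'_{\ell-1}, w')\,p_{n-t}(w', y'_\ell),
\]
and then to compare this kernel sum with $p_n(y'_{\ell-1}, y'_\ell)$. The conditions defining $\mathcal{G}_{\zeta,C_1}$ (the requirement $\mathbf{y}_\ell \notin B(\mathbf{x_0}, N^\zeta)$, together with $\operatorname{diam}\mathbf{K} \le 2R$) force $y'_{\ell-1}$ and $y'_\ell$ to have Euclidean distance $\ge N^\zeta/2$ from $\varphi^{-1}(\mathbf{K})$, so the summand vanishes unless $N^\zeta/2 \le t \le n - N^\zeta/2$; for the first stretch $y'_0 = o$ has distance $\ge g(N)$ from $\varphi^{-1}(\mathbf{K})$ by the hypothesis of Theorem~\ref{thm:main}, giving $t \ge g(N)$.

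Split $t \le n/2$ from $t > n/2$ (symmetric via reversibility of the lazy walk), and for $t \le n/2$ split the $w'$-sum at $|w'-y'_{\ell-1}| = \sqrt t\,\log N$. The ``far'' part is bounded by $\max_{w'\text{ far}}p_t(y'_{\ell-1},w') \cdot \sum_{w'} p_{n-t}(w',y'_\ell) \le C\,t^{-d/2}e^{-c(\log N)^2}$ using the Gaussian upper bound; summed over $t \ge N^\zeta/2$ this is super-polynomially small, hence negligible against $p_n(y'_{\ell-1},y'_\ell) \ge c\,n^{-d/2}N^{-CC_1^2}$, which holds by the Gaussian lower bound since $|y'_\ell - y'_{\ell-1}| \le C_1\sqrt{\log N}\sqrt n \ll n$ on $\mathcal{G}_{\zeta,C_1}$. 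For ``near'' $w'$ the LCLT applies to $p_t(y'_{\ell-1},w')$, $p_{n-t}(w',y'_\ell)$ and $p_n(y'_{\ell-1},y'_\ell)$ with $o(1)$ relative error (the constraints $|\cdot| < \rho\cdot(\text{time})$ and the errors $O(\text{time}^{-1} + |\cdot|^4\,\text{time}^{-3}) = o(1)$ are checked using $t \ge N^\zeta/2 \gg (\log N)^4$ and $n-t \ge n/2$), and one then invokes the exact Gaussian ``bridge'' identity
\[
 \bar p_t(w'-y'_{\ell-1})\,\bar p_{n-t}(w'-y'_\ell) = \bar p_n(y'_\ell - y'_{\ell-1})\,\bar p_s(w'-w^*), \qquad s = \tfrac{t(n-t)}{n},\quad w^* = \tfrac{(n-t)y'_{\ell-1} + ty'_\ell}{n},
\]
to get $\sum_{w'\text{ near}} p_t(y'_{\ell-1},w')\,p_{n-t}(w',y'_\ell) = (1+o(1))\,p_n(y'_{\ell-1},y'_\ell)\sum_{w'\text{ near}}\bar p_s(w'-w^*)$. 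It is essential to use the LCLT here rather than only the bounds \eqref{e:gaussian_ub_lb}: with the two-sided Gaussian bounds alone the Gaussian constants in numerator and denominator would not match, losing a factor $\exp((C-c)|y'_\ell-y'_{\ell-1}|^2/n)$ which is polynomially large in precisely the range where $p_n(y'_{\ell-1},y'_\ell)$ is polynomially small (i.e.\ $|y'_\ell-y'_{\ell-1}| \sim \sqrt{n\log N}$, the stretches that are ``bad'' but not ``good''). Resolving this clash is the main difficulty, and it is what forces the separate treatment of the small-time regimes where the LCLT is unavailable.

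Next, since $\varphi^{-1}(\mathbf{K})$ is a union of $|K|$ cosets of $N\Z^d$, a sum-versus-integral estimate for the unimodal kernel gives $\sum_{w' \in \varphi^{-1}(\mathbf{K})}\bar p_s(w'-w^*) \le C|K|\bigl(N^{-d} + s^{-d/2}e^{-c\,\mathrm{dist}(w^*,\varphi^{-1}(\mathbf{K}))^2/s}\bigr)$ (the $N^{-d}$ term could alternatively be obtained from Lemma~\ref{lem:mixing_property} when $s$ is of order at least $N^2$). Since $|w^*-y'_{\ell-1}| = (t/n)|y'_\ell-y'_{\ell-1}| \le (t/n)\,C_1\sqrt{\log N}\sqrt n$, we get $\mathrm{dist}(w^*,\varphi^{-1}(\mathbf{K})) \ge N^\zeta/4$ whenever $t \le t_1 := c\,N^\zeta\sqrt n/\sqrt{\log N}$, while for $t > t_1$ we just use $s \ge t/2 > t_1/2$, so $s^{-d/2} \le Ct_1^{-d/2}$. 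Summing over $N^\zeta/2 \le t \le n/2$: the $N^{-d}$ part contributes $O(n/N^d)$; the part $s^{-d/2}e^{-cN^{2\zeta}/s}$ over $t \le t_1$ contributes $O(N^{-\zeta(d-2)})$ (substitute $u = N^{2\zeta}/t$, using $d \ge 3$ for convergence of the resulting Gamma-type integral and for the location of the maximal term); and $\sum_{t > t_1} t^{-d/2} = O\bigl(t_1^{-(d-2)/2}\bigr)$ up to a power of $\log N$. The choice \eqref{e:choice-zeta}, $\zeta(d-2) > d-\delta$, makes $N^{-\zeta(d-2)} = o(N^{\delta-d}) = o(n/N^d)$, and \eqref{e:choice-zeta} together with $\delta > 2$ from \eqref{e:choice-delta} makes $t_1^{-(d-2)/2}(\log N)^{O(1)} = o(n/N^d)$ as well (one expands $(\zeta+\delta/2)(d-2) > (d-\delta) + (\delta/2)(d-2) \ge 2(d-\delta)$, the last inequality being equivalent to $\delta \ge 2$). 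Hence the full double sum is $O(n/N^d)\,p_n(y'_{\ell-1},y'_\ell)$, proving \eqref{e:weaker_capacity-ub}.

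For the first stretch one repeats this with $g(N)$ in place of $N^\zeta$; since only an $o(1)$ factor is claimed the estimates are coarser, all surviving terms vanishing because $\delta < d$ (so the $N^{-d}$-sum is $n/N^d \to 0$) and $g(N) \to \infty$. The one new point is the range $g(N) \le t \le (\log N)^5$, where the LCLT may fail for $p_t(o,w')$ because $g(N)$ can grow slower than $(\log N)^4$: there $w'$ is confined to within $(\log N)^5$ of $o$, so $\bar p_{n-t}(w'-y'_1) = (1+o(1))\,p_n(o,y'_1)$ uniformly in such $w'$ and $t$, whence $\sum_{t,w'}\P_o[H=t,\,Y_t=w']\,p_{n-t}(w',y'_1) \le (1+o(1))\,p_n(o,y'_1)\sum_{w'\in\varphi^{-1}(\mathbf{K})}G(o,w') \le (1+o(1))\,p_n(o,y'_1)\cdot C|K|\,g(N)^{2-d} = o(1)\,p_n(o,y'_1)$, using the Green's-function bound $G(o,w') \le C|w'|^{2-d}$ and the fact that the only copy of $\mathbf{K}$ within distance $(\log N)^5$ of $o$ is the nearest one, at distance $\ge g(N)$. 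This gives \eqref{e:weaker_capacity-ub-o}.
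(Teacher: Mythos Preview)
Your proof is correct and takes a genuinely different route from the paper's.

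The paper decomposes the sum $\sum_{n_1+n_2=n}\sum_{x'}p_{n_1}(y'_{\ell-1},x')p_{n_2}(x',y'_\ell)$ into six cases according to whether $n_1,n_2\gtrless N^{2+6\eps}$ and whether the displacements are moderate. In the main case ($n_1,n_2\ge N^{2+6\eps}$, moderate displacement) it uses the LCLT to show $p_{n_1}(y'_{\ell-1},x')\le Cp_{n_1}(y'_{\ell-1},u')$ uniformly over $u'\in\T_N+xN$, replaces the sum over $x'\in\bfK+xN$ by $|\bfK|N^{-d}\sum_{u'}$, and collapses via Chapman--Kolmogorov. This comparison needs $N\ll\sqrt{n_1}$, hence the threshold $N^{2+6\eps}$; the residual short-time ranges are handled by a separate Green's-function lemma (Lemma~\ref{lem:Green-bnd}).

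Your approach avoids the ``$x'\mapsto u'$'' averaging entirely: the Gaussian bridge identity factors out $p_n(y'_{\ell-1},y'_\ell)$ exactly, leaving the lattice sum $\sum_{w'\in\varphi^{-1}(\bfK)}\bar p_s(w'-w^*)$ to be bounded directly. Because the lazy walk cannot reach $\varphi^{-1}(\bfK)$ from $y'_{\ell-1}$ in fewer than $N^\zeta/2$ steps, you never see $t$ smaller than a positive power of $N$, which already suffices for the LCLT error $(\log N)^4/t$ to vanish; so the short-time case analysis and the auxiliary Green's-function lemma are not needed. The geometric input (\ref{e:choice-zeta}) enters in the same way for both arguments, via the $N^{-\zeta(d-2)}$ term.

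What each approach buys: yours is shorter and conceptually cleaner for this proposition. The paper's case-by-case treatment is deliberately modular---the individual bounds are extracted as Corollaries~\ref{cor:x-summation}--\ref{cor:Case3-extract} and reused verbatim inside the considerably harder proof of Proposition~\ref{prop:capacity_ub} (the error terms $II(x)$ and the two-set inclusion--exclusion). If one were to adopt your argument for Proposition~\ref{prop:weaker_capacity_ub}, those pieces would have to be recovered separately when proving Proposition~\ref{prop:capacity_ub}.
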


Our final proposition is needed to estimate the number of stretches that are ``bad''.

\begin{proposition}
\label{prop:number_l}
We have
\begin{equation}
\begin{split}
 &\P_o \left[ \# \left\{ 1 \le \ell \le \frac{N^d}{n} C_1 \log N : 
     |Y_{n \ell} - Y_{n (\ell-1)}| > 10\sqrt{n}\log\log n \right\}
     \ge \frac{N^d}{n} \frac{1}{\log \log n} \right] \\
 &\qquad \to 0,
\end{split}
\end{equation}
as $N \to \infty$.
\end{proposition}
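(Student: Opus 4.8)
The plan is to bound the expected number of ``bad stretches'' and apply Markov's inequality. Write $M = \lfloor N^d/n \rfloor C_1 \log N$ for the number of stretches under consideration, and let
\[
  B_\ell = \mathbf{1}\{ |Y_{n\ell} - Y_{n(\ell-1)}| > 10\sqrt{n}\log\log n \}, \qquad 1 \le \ell \le M.
\]
The key observation is that by the Markov property, conditionally on $Y_{n(\ell-1)}$, the displacement $Y_{n\ell} - Y_{n(\ell-1)}$ has the distribution of $Y_n$ under $\mathbf{P}_o$, so
\[
  \mathbf{P}_o[ B_\ell = 1 \mid Y_{n(\ell-1)} ] = \mathbf{P}_o\left[ |Y_n| > 10\sqrt{n}\log\log n \right] =: q_n,
\]
a quantity that does \emph{not} depend on $\ell$. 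Hence $\mathbf{E}_o\big[ \sum_{\ell=1}^M B_\ell \big] = M q_n$, and by Markov's inequality the probability in the statement is at most
\[
  \frac{M q_n}{(N^d/n)(\log\log n)^{-1}} \le C_1 (\log N)(\log\log n)\, q_n .
\]
So it suffices to show that $q_n = \mathbf{P}_o[|Y_n| > 10\sqrt{n}\log\log n]$ decays fast enough to kill the factor $(\log N)(\log\log n)$.

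For this I would use the Gaussian upper bound \eqref{e:gaussian_ub_lb}, or alternatively the maximal inequality \eqref{e:maximal_ineq} applied coordinatewise. Using \eqref{e:gaussian_ub_lb}, summing the bound $p_n(o,y') \le C n^{-d/2} \exp(-c|y'|^2/n)$ over $|y'| > 10\sqrt{n}\log\log n$ and estimating the number of lattice points in dyadic shells as in the proof of Lemma \ref{lem:mixing_property}, one gets
\[
  q_n \le C \sum_{k \ge 10\log\log n} (k+1)^{d-1} e^{-ck^2} \le C \exp\left( -c' (\log\log n)^2 \right)
\]
for suitable constants (absorbing the polynomial prefactor into the exponential since $(\log\log n)^2$ dominates $\log\log n$). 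Since $\log N$ and $\log\log n$ are only polylogarithmic in $N$ (recall $n = \lfloor N^\delta\rfloor$, so $\log\log n \asymp \log\log N$), we have
\[
  C_1 (\log N)(\log\log n) \exp\left( -c'(\log\log n)^2 \right) \to 0 \qquad \text{as } N \to \infty,
\]
which completes the proof.

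I do not expect a serious obstacle here: the argument is a routine first-moment bound, the only mild care being to (i) make sure the conditional displacement estimate is genuinely $\ell$-independent, which follows from the Markov property and translation invariance of the walk on $\Z^d$, and (ii) confirm the tail bound $q_n$ beats the polylogarithmic prefactor, which it does with room to spare because the Gaussian tail gives a $\exp(-c(\log\log n)^2)$ decay against a $(\log N)(\log\log n)$ growth. One could equally invoke \eqref{e:maximal_ineq} with $r \asymp \log\log n$ for each of the $d$ coordinates and a union bound; this avoids even the shell-counting and yields the same conclusion, so whichever is cleaner in context can be used.
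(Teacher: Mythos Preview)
Your proposal is correct and follows essentially the same approach as the paper: bound the expected number of bad stretches via a uniform tail estimate on the per-stretch displacement, then apply Markov's inequality. The only cosmetic difference is that the paper invokes the maximal inequality \eqref{e:maximal_ineq} directly for the tail bound $q_n \le \exp(-c(\log\log n)^2)$, whereas you lead with the Gaussian upper bound and mention \eqref{e:maximal_ineq} as an alternative; either works and the rest of the argument is identical.
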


\section{Proof of the main theorem assuming the key propositions} 
\label{sec:proof-of-main}

This section is the proof of Theorem \ref{thm:main}.

\begin{proof}[Proof of Theorem \ref{thm:main} assuming Propositions \ref{prop:good_stretches}--\ref{prop:number_l}] \ \\ 
First, given any $(\tau,(y_\ell, \bfy_\ell)_{\ell=1}^\tau) \in \mathcal{G}_{\zeta,C_1}$, we define
\begin{equation}
\begin{split}
  \cL
	&= \{2 \le \ell \le \tau : |y'_\ell-y'_{\ell-1}|\le 10\sqrt{n}\log\log n\} \\
	\cL'
	&= \{2 \le \ell \le \tau : |y'_\ell-y'_{\ell-1}|> 10\sqrt{n}\log\log n\}.
\label{e:cL-decomp}
\end{split}
\end{equation} 
Thus we have
\[ \{ 1, \dots, \tau \}
   = \{ 1 \} \cup \cL \cup \cL'. \]
We further denote by 
\begin{equation*}
    \mathcal{G}'_{\zeta,C_1}
    = \left\{ (\tau,(y_\ell, \bfy_\ell)_{\ell=1}^\tau) \in \mathcal{G}_{\zeta,C_1} : |\cL'| \le \frac{N^d}{n}\frac{1}{\log\log n} \right\}.
\end{equation*}

We have by Proposition \ref{prop:good_stretches} that 
\begin{equation}
\begin{split}
 &\mathbf{P}_{o} \left[ Y_t \not\in \varphi^{-1}(\mathbf{K}),\, 0 \le t < T \right] \\
 & = o(1) + \sum_{( \tau, (y_\ell, \mathbf{y}_\ell)_{\ell=1}^\tau ) \in \mathcal{G}_{\zeta,C_1}}
          \prod_{\ell=1}^\tau \mathbf{P}_{y'_{\ell-1}} [ Y_n = y'_\ell,\, \text{$Y_t \not\in \varphi^{-1}(\mathbf{K})$ for 
	 $0 \le t < n$} ].
\end{split}
\label{e:decomp-initial}
\end{equation}
By Proposition \ref{prop:number_l}, we can replace the summation over elements of $\mathcal{G}_{\zeta,C_1}$ by summation 
over just elements of $\mathcal{G}'_{\zeta,C_1}$, at the cost of a $o(1)$ term.
That is,
\begin{equation}
\begin{split}
 &\mathbf{P}_{o} \left[ Y_t \not\in \varphi^{-1}(\mathbf{K}),\, 0 \le t < T \right] \\
 & = o(1) + \sum_{( \tau, (y_\ell, \mathbf{y}_\ell)_{\ell=1}^\tau ) \in \mathcal{G}'_{\zeta,C_1}}
          \prod_{\ell=1}^\tau \mathbf{P}_{y'_{\ell-1}} [ Y_n = y'_\ell,\, \text{$Y_t \not\in \varphi^{-1}(\mathbf{K})$ for 
	 $0 \le t < n$} ].
\end{split}
\label{e:decomp-initial2}
\end{equation}
Applying Proposition \ref{prop:capacity_ub} for the factors $\ell \in \cL$ and Proposition \ref{prop:weaker_capacity_ub}
for the factors $\ell \in \{ 1 \} \cup \cL'$ we get
\begin{equation}
\label{e:decomp-main}
\begin{split}
 &\mathbf{P}_{o} \left[ Y_t \not\in \varphi^{-1}(\mathbf{K}),\, 0 \le t < T \right] \\
 & = o(1) + \sum_{( \tau, (y_\ell, \mathbf{y}_\ell)_{\ell=1}^\tau ) \in \mathcal{G}'_{\zeta,C_1}}
          \prod_{\ell=1}^\tau \mathbf{P}_{y'_{\ell-1}} [ Y_n = y'_\ell ] \\
 &\qquad  \times (1-o(1)) \, \prod_{\ell\in \cL}
          \left(1-\frac{1}{2}\cpty(\bfK)\frac{n}{N^d}(1+o(1))\right)
          \,\prod_{\ell\in \cL'} \left(1-O\left(\frac{n}{N^d}\right)\right).
\end{split}
\end{equation}

Note that since the summation is over elements of $\mathcal{G}'_{\zeta,C_1}$ only, we have 
\begin{equation}
\label{e:L^c_bound}
|\cL'| \le \frac{N^d}{n}\frac{1}{\log\log n}.
\end{equation}
By \eqref{e:L^c_bound}, we can lower bound the last product in \eqref{e:decomp-main} by
\begin{equation*}
\exp\left(-O\left(\frac{n}{N^d}\right)\frac{N^d}{n}\frac{1}{\log\log n}\right)
= e^{o(1)} = (1+o(1)).
\end{equation*}
Since the product is also at most $1$, it equals $1 + o(1)$.

Also, due to \eqref{e:L^c_bound}, we have 
\begin{equation*}
 \tau-1-\frac{N^d}{n}\frac{1}{\log\log n}
 \le |\cL| \le\tau.
\end{equation*}
Since $\tau \ge \frac{N^d}{n} (\sqrt{\log \log n})^{-1}$, we have $|\cL|=(1+o(1))\tau$.
This implies that the penultimate product in \eqref{e:decomp-main} equals
\begin{equation}
 \left(1-\frac{1}{2}\cpty(\bfK)\frac{n}{N^d}(1+o(1))\right)^{(1+o(1))\tau}
 = \exp \left( - \frac{1}{2} \cpty(\bfK) \frac{n}{N^d} \tau (1+o(1)) \right). 
\end{equation}

Recall that $S=\inf\{\ell\geq 0: Y_{n\ell}\notin (-L,L)^d\}$.
By summing over $(y_\ell, \mathbf{y}_\ell)_{\ell=1}^\tau$ and appealing to \eqref{e:high-prob}, 
we get that \eqref{e:decomp-main} equals
\begin{equation}
 o(1) + \sideset{}{'}\sum_{\tau} \E \left[ \mathbf{1}_{S = \tau} 
   \exp \left( - \frac{1}{2} \cpty(\bfK) \frac{n}{N^d} \tau (1+o(1)) \right) \right], 
\end{equation}
where the primed summation denotes restriction to 
$\frac{N^d}{n} (\sqrt{\log \log n})^{-1} \le \tau \le (\log N) \frac{N^d}{n}$.
Since due to Lemma \ref{lem:E1_prelim}, $S$ satisfies the bounds on $\tau$ with probability going to $1$, the latter 
expression equals
\begin{equation}
 o(1) + \E \left[ e^{-\frac{1}{2} \cpty(\bfK) \frac{n}{N^d} S} \right]. 
\end{equation}

Let $\Gamma_n$ denote the covariance matrix for $Y_n$, so that 
$\Gamma_n=\frac{n}{2d}I$.
Let $Z_{1}=\sqrt{\frac{2d}{n}}\,Y_{n}$, 
with the covariance matrix $\Gamma_{Z} = I$.
Let $Z_\ell=\sqrt{\frac{2d}{n}}\,Y_{n\ell}$ for $\ell\ge 0$.

Since $L^2\sim A\,N^d$, the event $\{Y_{n\ell} \notin (-L,L)^d\}$ is 
the same as $\{Y_{n\ell} \notin (-(1+o(1))\sqrt{A}N^{d/2},(1+o(1))\sqrt{A}N^{d/2})^d\}$.
Converting to events in terms of $Z$ we have
\begin{equation*}
Z_{\ell} 
\notin \left(-\sqrt{2dA(1+o(1))}(N^d/n)^{1/2},\sqrt{2dA(1+o(1))}(N^d/n)^{1/2}\right)^d.
\end{equation*}
Now we can write $S$ as 
\begin{equation*}
S=\inf\{\ell\ge 0: 
Z_{\ell}\notin(-\sqrt{2dA(1+o(1))}(N^d/n)^{1/2},\sqrt{2dA(1+o(1))}(N^d/n)^{1/2})^d\}.
\end{equation*}

Let $\sigma_1 = \inf\{t>0:B_t\notin(-1,1)^d\}$ be the exit time of Brownian motion
from $(-1,1)^d$.
By Donsker's Theorem \cite[Theorem 8.1.5]{Durrett2019}
we have 
\begin{equation*}
\P\left[S\le 2dA(1+o(1))\frac{N^d}{n}t\right]
\to \P[\sigma_1\le t].
\end{equation*}
Then we have that $\frac{n}{N^d} S$ converges in distribution to
$c \sigma_1$, with $c = 2dA$. This completes the proof.
\end{proof}

\section{Proofs of the key propositions} 
\label{sec:proof of key propositions}

\subsection{Proof of Proposition \ref{prop:weaker_capacity_ub}}
\label{ssec:weaker_capacity_ub}

In the proof of the proposition we will need the following lemma
that bounds the probability of hitting some copy of $\bfK$ in terms 
of the Green's function of the random walk. Recall that the Green's 
function is defined by 
\[ G(x',y')
   = \sum_{t=0}^\infty p_t(x',y'), \]
and in all $d \ge 3$ satisfies the bound \cite{LawlerLimic2010}
\[ G(x',y') \le \frac{C_G}{|y'-x'|^{d-2}} \]
for a constant $C_G = C_G(d)$.
For part (ii) of the lemma recall that $\bfK \cap \varphi(B_{g(N)}(o)) = \es$.
We also define $\mathrm{diam}(\bfK)$ as the maximum Euclidean distance between two points in $\bfK$.

\begin{lemma}
\label{lem:Green-bnd}
Let $d \ge 3$. Assume that $N$ is large enough so that $N^\zeta \ge \mathrm{diam}(\bfK)$. \\
(i) If $y' \in \Z^d$ satisfies $\varphi(y') \not\in B(\bfx_0,N^{\zeta})$, then
for all sufficiently small $\eps > 0$ we have
\begin{equation}
\label{e:Green-bnd-short}
  \sum_{t=0}^{N^{2+6\eps}} \sum_{x \in \Z^d} \sum_{x' \in \bfK+xN}
  p_t(y',x')
	\le \frac{C}{N^{\zeta(d-2)}}.
\end{equation}
(ii) If $g(N) \le N^\zeta$, then for all sufficiently small $\eps > 0$ we have
\begin{equation}
\label{e:Green-bnd-o}
  \sum_{t=0}^{N^{2+6\eps}} \sum_{x \in \Z^d} \sum_{x' \in \bfK+xN}
  p_t(o,x')
	\le \frac{C}{g(N)^{(d-2)}}.
\end{equation}
(iii) If $y' \in \Z^d$ satisfies $\varphi(y') \not\in B(\bfx_0,N^{\zeta})$, then
for all sufficiently small $\eps > 0$ we have
\begin{equation}
\label{e:Green-bnd}
  \sum_{x \in \Z^d} \sum_{\substack{x' \in \bfK+xN \\ |x'-y'| \le n^{\frac{1}{2}+\eps}}}
    G(y',x') 
	\le \frac{C}{N^{d-\delta-2\delta\eps}}.
\end{equation}
\end{lemma}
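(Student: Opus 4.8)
The plan is to use the structure of the preimage $\varphi^{-1}(\mathbf{K}) = \bigsqcup_{x \in \Z^d} (\mathbf{K} + xN)$ as a disjoint union of translates of $\mathbf{K}$, each of bounded cardinality $|\mathbf{K}| = |K|$ and, for $N$ large, bounded diameter $\mathrm{diam}(\mathbf{K}) \le 2R$, arranged in an $N$-spaced array indexed by $x$. The hypothesis $\varphi(y') \notin B(\mathbf{x}_0, N^\zeta)$ in (i) and (iii) says that the shortest vector $w$ of the coset $y' - \mathbf{x}_0 + N\Z^d$ has $|w| \ge N^\zeta$; hence every $x' \in \mathbf{K} + xN$ satisfies $|x' - y'| \ge |w + xN| - \mathrm{diam}(\mathbf{K})$, with $|w + xN| \ge N^\zeta$ for the closest copies and $|w + xN| \ge c|x|N$ once $|x|$ exceeds a constant. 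In (ii), the disjointness $\mathbf{K} \cap \varphi(B_{g(N)}(o)) = \es$ plays the same role with $y' = o$: it forces \emph{every} copy $\mathbf{K} + xN$ to lie at distance at least $g(N)$ from $o$. In all three cases the left-hand side is thus bounded, up to the bounded factor $|K|$, by a sum over $x \in \Z^d$ of a single-copy contribution evaluated at a distance comparable to $\max\{N^\zeta, |x| N\}$ (respectively $\max\{g(N), |x| N\}$).

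For (i) and (ii) I would first replace the time-truncated sum of transition probabilities by a truncated Green's function bound. Using the Gaussian upper bound in \eqref{e:gaussian_ub_lb}, the splitting $e^{-c r^2 / t} \le e^{-c r^2/(2t)} e^{-c r^2/(2T)}$ valid for $1 \le t \le T$, and the elementary estimate $\sum_{t \ge 1} t^{-d/2} e^{-c r^2/t} \le C r^{2-d}$ (for $d \ge 3$, $r \ge 1$, by comparison with an integral), one gets, with $T = N^{2 + 6\eps}$,
\[
  \sum_{t=0}^{T} p_t(y', x') \le \frac{C}{|x'-y'|^{d-2}}\, e^{-c |x'-y'|^2 / (2T)}
\]
(the $t = 0$ term vanishing because $y' \notin \varphi^{-1}(\mathbf{K})$). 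Summing this over the copies, the $O(1)$ copies closest to $y'$ contribute at most $C N^{-\zeta(d-2)}$ (respectively $C g(N)^{2-d}$), while the remaining copies, organised into dyadic shells in $|x|$ and compared with a Gaussian integral, contribute at most $C N^{-(d-2)} \sum_{|x| \ge 1} |x|^{2-d} e^{-c |x|^2 N^2/(2T)} \le C N^{6\eps - (d-2)}$, since $N^2 / T = N^{-6\eps}$. Because $\zeta < 1$ (respectively $g(N) \le N^\zeta < N$), for all sufficiently small $\eps > 0$ the exponent $6\eps - (d-2)$ is at most $-\zeta(d-2)$ (respectively $N^{6\eps - (d-2)} \le g(N)^{2-d}$), so the closest-copy term dominates and \eqref{e:Green-bnd-short} (respectively \eqref{e:Green-bnd-o}) follows.

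For (iii) there is no time truncation, but the sum is restricted to copies within distance $\rho := n^{1/2 + \eps}$ of $y'$. Using the bound $G(y', x') \le C_G |x'-y'|^{2-d}$ recalled above, the $O(1)$ closest copies again contribute at most $C N^{-\zeta(d-2)}$, and the remaining in-range copies, summed over dyadic shells in $|x|$ (now with $1 \le |x| \le C\rho / N$ and no Gaussian factor), contribute at most $C N^{-(d-2)} \sum_{1 \le |x| \le C\rho/N} |x|^{2-d} \le C N^{-(d-2)} (\rho/N)^2 = C \rho^2 / N^d \le C N^{-(d - \delta - 2\delta\eps)}$, using $\rho^2 = n^{1 + 2\eps} \le N^{\delta(1 + 2\eps)}$. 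By \eqref{e:choice-zeta} we have $\zeta(d-2) > d - \delta$, so for all sufficiently small $\eps > 0$ also $\zeta(d-2) > d - \delta - 2\delta\eps$; hence the closest-copy term is again dominated and \eqref{e:Green-bnd} follows.

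The main obstacle is controlling the tail over the infinitely many remote copies in (i) and (ii): the full Green's function sum $\sum_x G(y', x')$ over all copies diverges, and the truncated sum is finite only because of the time cutoff at $T = N^{2+6\eps}$, so one must extract precisely this gain — which is the purpose of the combined Gaussian/Green bound above. A secondary point is to verify that $|\mathbf{K}| = |K| \le C R^d$ and $\mathrm{diam}(\mathbf{K}) \le 2R$ are genuinely bounded, so that they can be absorbed into the constants, and that the inequalities used ($\zeta < 1$ for (i)--(ii), and $\zeta(d-2) > d - \delta$ from \eqref{e:choice-zeta} for (iii)) leave a strictly positive range of admissible $\eps$.
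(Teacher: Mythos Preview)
Your proposal is correct and follows the same overall structure as the paper: in each part you isolate the $O(1)$ nearest copies of $\mathbf K$ (which give the dominant term $N^{-\zeta(d-2)}$ or $g(N)^{2-d}$) and control the remaining copies by an annular/shell decomposition together with the Green's function bound $G(y',x')\le C_G|x'-y'|^{2-d}$. Part~(iii) matches the paper almost exactly.

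The one genuine difference is in how you handle the infinitely many remote copies in~(i)--(ii). The paper first splits the spatial range at $|x'-y'|=N^{1+4\eps}$, disposes of the far piece by a crude Gaussian bound $N^{O(1)}\exp(-cN^{2\eps})$, and then bounds the near piece by the full Green's function and sums over annuli $Q(kN)\setminus Q((k-1)N)$, $k\le N^{4\eps}$. You instead derive the single pointwise bound
\[
\sum_{t=0}^{T} p_t(y',x')\le \frac{C}{|x'-y'|^{d-2}}\,e^{-c|x'-y'|^2/(2T)}
\]
via the splitting $e^{-cr^2/t}\le e^{-cr^2/(2t)}e^{-cr^2/(2T)}$ for $t\le T$, and then sum this over copies; the Gaussian factor truncates the lattice sum so that $\sum_{|x|\ge 1}|x|^{2-d}e^{-c|x|^2/N^{6\eps}}\le CN^{6\eps}$. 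This is a cleaner packaging of the same two ingredients (Green's decay plus Gaussian tail from the time cutoff), and it yields the same far-copy exponent $6\eps-(d-2)$ compared with the paper's $8\eps-(d-2)$, leading to the identical conclusion after invoking $\zeta<1$.
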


\begin{proof} (i) We split the sum according to whether $|y'-x'| > N^{1+4\eps}$ or $\le N^{1+4\eps}$. 
In the first case we use \eqref{e:gaussian_ub_lb} and write $r = \lfloor |x'-y'| \rfloor$ to get
\begin{equation*}
\begin{split}
    \sum_{t=0}^{N^{2+6\eps}}\sum_{\substack{x' \in \varphi^{-1}(\bfK) \\|x'-y'|> N^{1+4\eps}}} p_t(y', x')
    &\le \sum_{t=0}^{N^{2+6\eps}}\sum_{r=\lfloor N^{1+4\eps} \rfloor}^{\infty} C\,r^{d-1} \exp\left(-\frac{c\, r^2}{N^{2+6\eps}}\right)\\
    &\le N^{2+6\eps}\sum_{r=\lfloor N^{1+4\eps}\rfloor}^{\infty} C\, r^{d-1} \exp\left(-\frac{c \, r^2}{N^{2+6\eps}}\right)\\
    &\le N^{O(1)}\exp(-cN^{2\eps{}}).
\end{split}
\end{equation*}

For the remaining terms, we have the upper bound
\begin{equation*}
    \sum_{t=0}^{N^{2+6\eps}}\sum_{\substack{x' \in \varphi^{-1}(\bfK) \\|x'-y'|\le N^{1+4\eps}}} p_t(y', x')
    \le \sum_{\substack{x' \in \varphi^{-1}(\bfK) \\|x'-y'|\leq N^{1+4\eps}}} G(y',x').
\end{equation*}

Let $Q(k N)$ be the cube with radius $kN$ centred at $o$ and then $y' + \left( Q(k N)\backslash Q((k-1)N) \right)$
are disjoint annuli for $k = 1, 2, \dots$.
We decompose the sum over $x'$ according to which annulus $x'$ falls into. For $k \ge 2$ we have
\begin{align*}
\sum_{\substack{x'\in\varphi^{-1}(\bfK)
	\\x'-y'\in Q(k N)\backslash Q\left((k -1)N\right)}} 
	\frac{C_G}{|y'-x'|^{d-2}}
\leq |\bfK| C k^{d-1} C_G (N k)^{2-d}
\leq |\bfK| C k N^{2-d},
\end{align*}
where $C_G$ is the Green's function constant. The contribution from any copy of $\bfK$ in 
$y' + Q(N)$ will be of order $N^{2-d}$ if its distance from $y'$ is at least $N/3$, say.
Note that there is at most one copy of $\bfK$ within distance $N/3$ of $y'$, 
which may have a distance as small as $N^\zeta$.

We have to sum over the following values of $k$:
\begin{align*}
k=1,\dots, \frac{N^{1+4\eps}}{N}
= N^{4\eps}.
\end{align*}
Since $x'\in\varphi^{-1}(\bfK)$ and $y'\notin \varphi^{-1}\left(B(\bfx_0, N^{\zeta})\right)$
for $\bfx_0\in \bfK$, 
the distance between $x'$ and $y'$ is at least $N^{\zeta}$.
Therefore, we get the upper bound as follows:
\begin{align*}
\sum_{\substack{x'\in\varphi^{-1}(\bfK)\\|x'-y'|\leq N^{1+4\eps}}}G(y',x')
&\leq |\bfK|N^{\zeta(2-d)} + \sum_{k=1}^{N^{4 \eps}} |\bfK|CkN^{2-d}\\
&\leq |\bfK|N^{\zeta(2-d)} + C |\bfK|N^{2-d}\times N^{8\eps} 
\leq C|\bfK|N^{\zeta(2-d)}.
\end{align*}
Here the last inequality follows from the choice of $\zeta$, \eqref{e:choice-zeta}, for 
sufficiently small $\eps > 0$.

(ii) The proof is essentially the same, except for the contribution of the 
''nearest'' copy of $\bfK$, which is now $C |\bfK| g(N)^{2-d}$.

(iii) The proof is very similar to that in part (i). 
Recall that $n=\lfloor N^{\delta} \rfloor$. This time we need to sum over
$k = 1, \dots, n^{\frac{1}{2}+\eps} / N$, which results in the bound
\[ C |\bfK| N^{-\zeta(d-2)} + C|\bfK| N^{2-d} \times N^{\delta + 2 \delta \eps - 2}
  = C|\bfK| \left[ N^{-\zeta(d-2)} + N^{\delta - d + 2 \delta \eps} \right]. \]
Here, for $\eps > 0$ small enough, the second term dominates due to the choice of $\zeta$; see \eqref{e:choice-zeta}.
\end{proof}

\begin{proof}[Proof of Proposition \ref{prop:weaker_capacity_ub}]
Since 
\begin{equation*}
\begin{split}
 &\P_{y'_{\ell-1}} 
	 \left[ Y_n = y'_\ell,\, \text{$Y_t \not\in \varphi^{-1}(\mathbf{K})$ for 
	 $0 \le t < n$} \right] \\
 &\qquad = \P_{y'_{\ell-1}} \left[ Y_n = y'_\ell \right]
   - \P_{y'_{\ell-1}} 
	 \left[ Y_n = y'_\ell,\, \text{$Y_t \in \varphi^{-1}(\mathbf{K})$ for
	 some $0 \le t < n$} \right], 
\end{split}
\end{equation*}
we need to show that 
\begin{equation*}
 \P_{y'_{\ell-1}} 
	 \left[ Y_n = y'_\ell,\, \text{$Y_t \in \varphi^{-1}(\mathbf{K})$ for some
	 $0 \le t < n$} \right]
 = O\left(\frac{n}{N^d}\right)\P_{y'_{\ell-1}} \left[ Y_n = y'_\ell \right].
\end{equation*}

Define $A(x)= \left\{Y_n=y'_{\ell},
\text{ $Y_t\in x N + \bfK$ for some $0\leq t < n$} \right\}$, 
so that 
\begin{equation}
\label{e:sum-A(x)-ub}
 \mathbf{P}_{y'_{\ell-1}} 
	 \left[ Y_n = y'_\ell,\, \text{$Y_t \in \varphi^{-1}(\mathbf{K})$ for some
	 $0 \le t < n$} \right] 
 \le \sum_{x \in \Z^d} \P_{y'_{\ell-1}} [ A(x) ]. 
\end{equation}
We have
\begin{equation}
\label{e:A(x)-ub}
\begin{split}
\P_{y'_{\ell-1}}[A(x)]
	\leq \sum_{n_1+n_2=n}\sum_{x'\in \bfK+xN}
		p_{n_1}(y'_{\ell-1},x') p_{n_2}(x',y'_{\ell}).
\end{split}
\end{equation}

We bound this by splitting up the sum into different contributions. Let $\eps > 0$ 
that will be chosen sufficiently small in the course of the proof.

\emph{Case 1. $n_1, n_2\geq N^{2+6\eps}$ and $|y'_{\ell-1}-x'|\leq n^{\frac{1}{2}+\eps}_1$,
$|x'-y'_{\ell}|\leq n^{\frac{1}{2}+\eps}_2$.} By the LCLT  
we have that 
\begin{equation}
\label{e:LCLT-Case1}
\begin{split}
 p_{n_1}(y'_{\ell-1},x')
	&\leq Cp_{n_1}(y'_{\ell-1},u') \quad
	\text{for any $u' \in \T_N+xN$}, \\
  p_{n_2}(x',y'_{\ell})
	&\leq Cp_{n_2}(u',y'_{\ell}) \quad
	\text{for any $u' \in \T_N+xN$}. 
\end{split}
\end{equation} 
For this note that we have
\begin{equation*}
\begin{split}
\left|\frac{d|y'_{\ell-1}-x'|^2}{n_1} - \frac{d|y'_{\ell-1}-u'|^2}{n_1}\right|
&\leq \frac{d|x'-u'|^2}{n_1} 
+ \frac{2d|\langle x'-u',y'_{\ell-1}-x'\rangle|}{n_1} \\
&\leq C\frac{N^2}{n_1} + \frac{CN\cdot n_1^{\frac{1}{2}+\eps}}{n_1},
\end{split}
\end{equation*}
where the first term tends to $0$ and the rest equals
\begin{equation*}
\begin{split}
CN n_1^{-\frac{1}{2}+\eps}
\leq CN\cdot N^{(2+6\eps)(-\frac{1}{2}+\eps)} 
= CN^{-\eps+6\eps^2}
\to 0, \quad\text{as $N\to \infty$.}
\end{split}
\end{equation*}
Here we choose $\eps$ so that $-\eps+6\eps^2 <0$.
A similar observation shows the estimate for $p_{n_2}(x',y'_\ell)$.

The way we are going to use \eqref{e:LCLT-Case1} is to replace 
the summation over $x'$ by a summation over all $u' \in \T_N + xN$, and at the same time inserting a factor $|\bfK|/N^d$. Hence the contribution of the values of $n_1, n_2$ and $x$ in Case 1 to the right-hand side of
\eqref{e:sum-A(x)-ub} is at most
\begin{equation*}
\begin{split}
 \frac{C|\bfK|}{N^d} \sum_{n_1 + n_2 = n} \sum_{u' \in \Z^d} p_{n_1}(y'_{\ell-1},u') p_{n_2}(u', y'_\ell)
 &= \frac{C|\bfK|}{N^d} \sum_{n_1 + n_2 = n} p_{n}(y'_{\ell-1}, y'_{\ell}) \\
 &\le \frac{C|\bfK| n}{N^d} p_{n}(y'_{\ell-1}, y'_\ell).
\end{split}
\end{equation*}

This completes the bound in Case 1. For future use, note that if $\eps_n \to 0$ is any sequence, 
and we add the restriction $n_1 \le \eps_n n$ to the conditions in Case 1, we obtain the 
upper bound
\begin{equation}
\begin{split}
  C |\bfK| \frac{\eps_n n}{N^d} p_{n}(y'_{\ell-1}, y'_\ell)
	= o(1) \frac{n}{N^d} p_{n}(y'_{\ell-1}, y'_\ell).
\end{split}
\label{e:Case1-epsn}
\end{equation}

\emph{Case 2a. $n_1, n_2 \ge N^{2+6\eps}$ but $|x' - y'_{\ell-1}| > n_1^{\frac{1}{2}+\eps}$.}
In this case we bound $p_{n_2}(x', y'_\ell) \le 1$ and have that the contribution of this case to the
right-hand side of \eqref{e:sum-A(x)-ub} is at most
\begin{equation*}
\begin{split}
 \sum_{\substack{n_1+n_2 = n \\ n_1,n_2 \ge N^{2+6\eps}}} 
   \P_{y'_{\ell-1}} [ |Y_{n_1} - y'_{\ell-1}| > n_1^{1/2+\eps} ] 
 &\le \sum_{\substack{n_1+n_2 = n \\ n_1,n_2 \ge N^{2+6\eps}}}
   C \exp ( - c n_1^{2\eps} )\\
 &\le C n \exp ( - c N^{4 \eps} )
 = o \left(\frac{n}{N^d}\right)  p_{n}(y'_{\ell-1}, y'_\ell), 
\end{split}
\end{equation*}
where in the first step we used \eqref{e:maximal_ineq} and 
in the last step we used the Gaussian lower bound \eqref{e:gaussian_ub_lb} for $p_n$.
Indeed, the requirement for the Gaussian lower bound is satisfied for sufficiently large $N$ because $|y'_{\ell}-y'_{\ell+1}| \le C_1 \sqrt{\log n}\sqrt{n} \le c\,n$.
Therefore, we have
\begin{equation}
\label{e:p_n-lbd}
 p_{n}(y'_{\ell-1},y'_{\ell})
 \ge \frac{c}{n^{d/2}}\exp\left(-\frac{C|y'_{\ell}-y'_{\ell-1}|^2}{n}\right)
  \ge \frac{c}{n^{d/2}}\exp\left(-C\,\log n\right).
\end{equation}
Then we have  
\begin{equation*}
\begin{split}
\frac{Cn\exp (-cN^{4\eps})}{c n^{-d/2}\exp\left(-C\,\log n\right)}
\le Cn^{1+d/2}\exp\left(- c N^{4\eps} + C\,\log n\right)
= o \left( \frac{n}{N^d} \right), \,\text{as $N\to\infty$.}
\end{split}
\end{equation*}

\emph{Case 2b. $n_1, n_2 \ge N^{2+6\eps}$ but $|y'_{\ell} - x'| > n_2^{1/2+\eps}$.}
This case can be handled very similarly to Case 2a.

\emph{Case 3a. $n_1 < N^{2+6\eps}$ 
and $|x'-y'_{\ell-1}|\le N^{\frac{\delta}{2}-\eps}$.}
By the LCLT we have 
\begin{equation*}
\begin{split}
 p_{n_2}(x',y'_{\ell}) 
 &= \frac{C}{n_2^{d/2}}\exp\left(-\frac{d|y'_{\ell}-x'|^2}{n_2}\right)
 (1+o(1))\\
 p_{n}(y'_{\ell-1},y'_{\ell})
 &= \frac{C}{n^{d/2}}\exp\left(-\frac{d|y'_{\ell}-y'_{\ell-1}|^2}{n}\right)
 (1+o(1)).
\end{split}
\end{equation*}
We claim that 
\begin{equation}
\label{e:Case3a-compare}
p_{n_2}(x',y'_{\ell}) 
\le C\,p_n(y'_{\ell-1},y'_\ell).
\end{equation}
We first note that $n_2=n-n_1 = n(1+o(1))$, then we deduce that $n_2^{-d/2}= O(n^{-d/2})$
and 
\begin{equation*}
\exp\left(-\frac{d|y'_{\ell}-y'_{\ell-1}|^2}{n}\right)
\ge \exp\left(-\frac{d|y'_{\ell}-y'_{\ell-1}|^2}{n_2}\right).
\end{equation*}
Since we have $|x'-y'_{\ell-1}|\le N^{\frac{\delta}{2}-\eps}$ in the exponent, 
we have, as $N\to\infty$,
\begin{equation*}
\frac{|x'-y'_{\ell-1}|^2}{n_2}
\le \frac{N^{\delta-2\eps}}{n_2}
\to 0
\end{equation*}
and 
\begin{equation*}
\frac{|y'_{\ell}-y'_{\ell-1}||x'-y'_{\ell-1}|}{n_2}
\le \frac{n^{\frac{1}{2}}C_1\sqrt{\log n}N^{\frac{\delta}{2}-\eps}}{n_2}
\to 0.
\end{equation*}
These imply that
\begin{equation*}
\begin{split}
 \left|\frac{|y'_{\ell}-y'_{\ell-1}|^2-|y'_{\ell}-x'|^2}{n_2}\right|
 &\le \left|\frac{|y'_{\ell}-y'_{\ell-1}|^2
 	-|(y'_{\ell}-y'_{\ell-1})+(y'_{\ell-1}-x')|^2}{n_2}\right|\\
 &\le \frac{|x'-y'_{\ell-1}|^2}{n_2}
 	+\frac{2|y'_{\ell}-y'_{\ell-1}||x'-y'_{\ell-1}|}{n_2}
 \to 0.
\end{split} 
\end{equation*}
Thus \eqref{e:Case3a-compare} follows from comparing the LCLT 
approximations of the two sides.

We now have that the contribution of this case to the
right-hand side of \eqref{e:sum-A(x)-ub} is at most
\begin{equation*}
\begin{split}
 C p_{n}(y'_{\ell-1},y'_{\ell})
 \sum_{\substack{n_1 < N^{2+6\eps}}} \sum_{x \in \Z^d} \sum_{x'\in \bfK+xN} p_{n_1}(y'_{\ell-1},x')
 &\le \frac{C}{N^{\zeta(d-2)}} p_{n}(y'_{\ell-1},y'_{\ell}) \\
 &\le o(1) \frac{n}{N^d} p_{n}(y'_{\ell-1},y'_{\ell}),
\end{split}
\end{equation*}
where in the first step we used Lemma \ref{lem:Green-bnd}(i) and 
the last step holds for the value of $\zeta$ we chose; cf.~\eqref{e:choice-zeta}.

\emph{Case 3b. $n_1 < N^{2+6\eps}$ but $|x'-y'_{\ell-1}|> N^{\frac{\delta}{2}-\eps}$.}
Use the Gaussian upper bound \eqref{e:gaussian_ub_lb} to bound $p_{n_1}$ and bound the sum over all $x'\in\Z^d$ of $p_{n_2}$ by 1 using symmetry of $p_{n_2}$ to get
\begin{equation*}
\begin{split}
 &\sum_{\substack{n_1+n_2=n\\n_1 < N^{2+6\eps}}} \sum_{x \in \Z^d} \sum_{x'\in\bfK+xN}
  p_{n_1}(y'_{\ell-1},x')p_{n_2}(x',y'_\ell)\\
  &\qquad\le \sum_{n_1 < N^{2+6\eps}} \frac{C}{n_1^{d/2}}\exp\left(-\frac{N^{\delta-2\eps}}{N^{2+6\eps}}\right) \sum_{x'\in \Z^d} p_{n-n_1}(x',y'_\ell)\\
 &\qquad\le \sum_{n_1 < N^{2+6\eps}} \frac{C}{n_1^{d/2}}\exp\left(-\frac{N^{\delta-2\eps}}{N^{2+6\eps}}\right)\\
 &\qquad \le C N^{O(1)} \exp(-N^{\delta-2-8\eps})
 = o\left(\frac{n}{N^d}\right) p_{n}(y'_{\ell-1}, y'_\ell), \quad\text{as $N\to\infty$.}
\end{split}
\end{equation*}
In the last step we used a Gaussian lower bound for $p_n$; cf.~\eqref{e:p_n-lbd}.

\emph{Case 4a. $n_2 < N^{2+6\eps}$ 
and $|y'_\ell - x'|\le N^{\frac{\delta}{2}-\eps}$.}  
This case can be handled very similarly to Case 3a.

\emph{Case 4b. $n_2 < N^{2+6\eps}$ 
and $|y'_\ell - x'|> N^{\frac{\delta}{2}-\eps}$.}
This case can be handled very similarly to Case 3b.

Therefore, we discussed all possible cases and proved statement 
\eqref{e:weaker_capacity-ub} of the proposition as required.


The proof of \eqref{e:weaker_capacity-ub-o} is similar to the first part 
with only a few modifications.
In this part we have to show that 
\begin{equation*}
 \P_{o} 
	 \left[ Y_n = y'_1,\, \text{$Y_t \in \varphi^{-1}(\mathbf{K})$ for some
	 $0 \le t < n$} \right]
 = o(1)\P_{o} \left[ Y_n = y'_1 \right].
\end{equation*}
Define $A_0(x)= \left\{Y_n=y'_1,
\text{ $Y_t\in x N + \bfK$ for some $0\leq t < n$} \right\}$, 
so that 
\begin{equation}
\label{e:sum-A_0(x)-ub}
 \mathbf{P}_{o} 
	 \left[ Y_n = y'_1,\, \text{$Y_t \in \varphi^{-1}(\mathbf{K})$ for some
	 $0 \le t < n$} \right] 
 \le \sum_{x \in \Z^d} \P_{o} [ A_0(x) ]. 
\end{equation}
We have
\begin{equation}
\label{e:A_0(x)-ub}
\begin{split}
\P_{o}[A_0(x)]
	\leq \sum_{n_1+n_2=n}\sum_{x'\in \bfK+xN}
		p_{n_1}(o,x') p_{n_2}(x',y'_{1}).
\end{split}
\end{equation}

%
We bound the term above by splitting up the sum into the same cases as in the proof of \eqref{e:weaker_capacity-ub}. The different cases can be 
handled very similarly to the first part. The difference is only in Case 3a while applying the Green's function bound Lemma \ref{lem:Green-bnd}.

In Case 3a, by the LCLT, we can deduce that
\begin{equation*}
   p_{n_2}(x',y'_1) 
    \le C\,p_n(o,y'_1). 
\end{equation*}
If $g(N) > N^\zeta$, the bound of Lemma \ref{lem:Green-bnd}(i) can be used as before.
If $g(N) \le N^\zeta$, by Lemma \ref{lem:Green-bnd}(ii), we have that the contribution of this case to the
right-hand side of \eqref{e:sum-A_0(x)-ub} is at most
\begin{equation*}
\begin{split}
 C p_{n}(o,y'_1)
 \sum_{\substack{n_1 < N^{2+6\eps}}} \sum_{x \in \Z^d} \sum_{x'\in \bfK+xN} p_{n_1}(o,x')
 \le \frac{C}{g(N)^{d-2}} p_{n}(o,y'_1) 
 = o(1) p_{n}(o,y'_1).
\end{split}
\end{equation*}
Here we used that $g(N) \to \infty$.

Note that Case 4a can be handled in the same way as in the proof of \eqref{e:weaker_capacity-ub},
since the distance between $y'_1$ and any copy of $\bfK$ is at least $N^\zeta$.

Therefore, we discussed all possible cases and proved \eqref{e:weaker_capacity-ub-o} as required.
\end{proof} 

For future use, we extract a few corollaries of the proof of Proposition \ref{prop:weaker_capacity_ub}.

\begin{corollary}
\label{cor:x-summation}
Assume that $y', y'' \in \Z^d$ are points such that 
$|y'' - y'| \le 2 C_1 \sqrt{\log n} \sqrt{n}$. 
Then for all $n/2 \le m \le n$ we have 
\begin{equation}
\label{e:x-summation}
    \sum_{n_1+n_2=m} \sum_{x \in \Z^d} \sum_{\substack{x'\in \bfK+xN \\ |y' - x'|, |y'' - x'| > N^\zeta}}
		p_{n_1}(y',x') p_{n_2}(x',y'')
    = O\left(\frac{n}{N^d}\right) p_{m}(y',y''). 
\end{equation}
\end{corollary}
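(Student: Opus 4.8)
The plan is to recognise that the asserted bound is, up to cosmetic changes, precisely what was already established inside the proof of Proposition~\ref{prop:weaker_capacity_ub}. There the quantity on the left was bounded for the specific endpoints $y'_{\ell-1},y'_\ell$ and time $n$, with the required separation from copies of $\bfK$ supplied by the condition $\bfy_{\ell-1}\in\T_N\setminus B(\bfx_0,N^\zeta)$ built into $\mathcal{G}_{\zeta,C_1}$; here the endpoints are arbitrary points $y',y''$ with $|y''-y'|\le 2C_1\sqrt{\log n}\sqrt n$, the time is a general $m\in[n/2,n]$, and the separation is imposed directly by restricting the inner sum to those $x'$ with $|y'-x'|,|y''-x'|>N^\zeta$. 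So I would reproduce verbatim the decomposition of $\sum_{n_1+n_2=m}\sum_{x}\sum_{x'\in\bfK+xN} p_{n_1}(y',x')p_{n_2}(x',y'')$ into the same Cases~1, 2a/2b, 3a/3b, 4a/4b, and check that each estimate is uniform in $y',y''$ and $m$. (The case $n_1,n_2<N^{2+6\eps}$ is vacuous since $m\ge n/2\gg N^{2+6\eps}$ as $\delta>2$.)

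For the cases $n_1,n_2\ge N^{2+6\eps}$: in Case~1 (both $|y'-x'|\le n_1^{1/2+\eps}$ and $|x'-y''|\le n_2^{1/2+\eps}$) the LCLT comparison \eqref{e:LCLT-Case1} uses only $|x'-u'|\le CN$ together with the displacement bounds, so it applies verbatim; replacing the $x'$-sum by the full sum over $u'\in\T_N+xN$ with a factor $|\bfK|/N^d$ and using Chapman--Kolmogorov $\sum_{u'}p_{n_1}(y',u')p_{n_2}(u',y'')=p_m(y',y'')$ gives a contribution at most $C|\bfK|(m/N^d)p_m(y',y'')\le C|\bfK|(n/N^d)p_m(y',y'')$. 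In Cases~2a/2b, where one of the displacements exceeds $n_i^{1/2+\eps}$, one bounds the complementary factor by $1$ and uses the maximal inequality \eqref{e:maximal_ineq} to get at most $Cn\exp(-cN^{4\eps})$, which is $o(n/N^d)\,p_m(y',y'')$ after dividing by the Gaussian lower bound $p_m(y',y'')\ge c\,n^{-d/2}\exp(-C\log n)$; here \eqref{e:gaussian_ub_lb} applies because $|y''-y'|\le 2C_1\sqrt{\log n}\sqrt n\le c\,m$ for large $N$, and $|y''-y'|^2/m=O(\log n)$ since $m\ge n/2$.

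For the cases with one of $n_1,n_2$ smaller than $N^{2+6\eps}$ (Cases~4a/4b being identical to 3a/3b with $y'$ and $y''$, and $n_1$ and $n_2$, interchanged, the restriction $|y''-x'|>N^\zeta$ playing the symmetric role): since $\delta>2$ we have $n_1\le N^{2+6\eps}=o(n)=o(m)$, hence $n_2=m(1+o(1))$. In Case~3a ($|x'-y'|\le N^{\delta/2-\eps}$) the LCLT gives $p_{n_2}(x',y'')\le C\,p_m(y',y'')$ exactly as in the original computation, and then $\sum_{n_1<N^{2+6\eps}}\sum_x\sum_{x'\in\bfK+xN,\,|y'-x'|>N^\zeta}p_{n_1}(y',x')\le C\,N^{-\zeta(d-2)}$ by the proof of Lemma~\ref{lem:Green-bnd}(i) --- the explicit restriction $|y'-x'|>N^\zeta$ in the inner sum is precisely the separation that proof exploits, in particular for the single ``nearest'' copy of $\bfK$, so no reformulation of the hypothesis in terms of a torus-ball is needed --- and $N^{-\zeta(d-2)}=o(n/N^d)$ by \eqref{e:choice-zeta}. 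In Case~3b ($|x'-y'|>N^{\delta/2-\eps}$) the Gaussian upper bound for $p_{n_1}$ together with $\sum_{x'}p_{n_2}(x',y'')\le 1$ gives at most $C\,N^{O(1)}\exp(-N^{\delta-2-8\eps})$, again $o(n/N^d)\,p_m(y',y'')$ after dividing by the Gaussian lower bound, for $\eps$ small enough that $\delta-2-8\eps>0$.

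The only point that genuinely requires checking --- and the ``hard part'', though it is mild --- is that none of these estimates degrades when $o,y'_1$ (resp.\ $y'_{\ell-1},y'_\ell$) and the time $n$ are replaced by general $y',y''$ and $m\in[n/2,n]$: the LCLT comparisons use only $m\asymp n$ and the displacement/cross-term bounds, which hold verbatim under $|y''-y'|\le 2C_1\sqrt{\log n}\sqrt n$; the denominator $p_m(y',y'')$ appearing in the error-term estimates needs $|y''-y'|=O(\sqrt{m\log m})$, which again holds; and $|\bfK|\le|B_R(o)|$ is bounded. Since every case estimate is uniform in $y',y'',m$, summing over $n_1+n_2=m$ (a factor $O(n)$) and over the $O(1)$ cases yields the claimed bound $O(n/N^d)\,p_m(y',y'')$.
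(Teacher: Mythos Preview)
Your proposal is correct and follows essentially the same approach as the paper: both recognise that the estimate was already carried out in the proof of Proposition~\ref{prop:weaker_capacity_ub} for the specific endpoints $y'_{\ell-1},y'_\ell$ and time $n$, and that the only place the separation condition from $\mathcal{G}_{\zeta,C_1}$ was actually used (Cases~3a and 4a) is now covered by the explicit restriction $|y'-x'|,|y''-x'|>N^\zeta$ in the summation. Your write-up spells out the case-by-case verification more explicitly than the paper does, but the argument is the same.
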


\begin{proof}
In the course of the proof of Proposition \ref{prop:weaker_capacity_ub}, 
we established the above with $m = n$, where $y' = y'_{\ell-1}$ and $y'' = y'_{\ell}$, 
and with $C_1$ in place of $2 C_1$ in the upper bound on the displacement $|y'' - y'|$. 
Note that in this case the restriction $|y' - x'|, |y'' - x'| > N^\zeta$ in the summation
holds for all $x$, due to conditions imposed on $y'_{\ell-1}$ and $y'_{\ell}$ in the
definition of $\mathcal{G}_{\zeta,C_1}$.

The arguments when $n/2 \le m < n$ and with the upper bound increased by a factor of $2$ are essentially the same.
The information that $y'_{\ell-1}$ and $y'_\ell$ are at least distance $N^\zeta$ from $\varphi^{-1}(\bfK)$
was only used in Cases 3a and 4a to handle terms $x'$ close to these points.
Since in \eqref{e:x-summation} we exclude such $x'$ from the summation, the statement 
follows without restricting the location of $y', y''$.
\end{proof}

The following is merely a restatement of what was observed in \eqref{e:Case1-epsn} (with part (ii) 
below holding by symmetry). 

\begin{corollary}
\label{cor:Case1-extract}
(i) For $\ell \ge 2$ and any sequence $\eps_n \to 0$, we have
\begin{equation}
\begin{split}
  \sum_{\substack{n_1+n_2=n \\ n_1 \le \eps_n n \\ n_1, n_2 \ge N^{2+6\eps}}} \ 
  \sum_{x \in \Z^d} \ \sum_{\substack{x' \in \bfK +xN : \\ |y'_{\ell-1}-x'|\leq n_1^{\frac{1}{2}+\eps} \\
      |x'-y'_{\ell}|\leq n_2^{\frac{1}{2}+\eps}}}
      p_{n_1}(y'_{\ell-1},x') p_{n_2}(x',y'_{\ell})
  = o(1) \frac{n}{N^d} p_n(y'_{\ell-1}, y'_\ell).
\end{split}
\label{e:Case1-extract}
\end{equation}
(ii) The same right-hand side expression is valid if we replace the restriction $n_1 \le \eps_n n$ by $n_2 \le \eps_n n$.
\end{corollary}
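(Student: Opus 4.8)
The plan is to observe that the sum on the left-hand side of \eqref{e:Case1-extract} is exactly the ``Case~1'' contribution to $\sum_{x} \P_{y'_{\ell-1}}[A(x)]$ analysed in the proof of Proposition~\ref{prop:weaker_capacity_ub} (see \eqref{e:A(x)-ub} and the paragraph explaining how \eqref{e:LCLT-Case1} is used), now subject to the additional constraint $n_1 \le \eps_n n$. So the proof should just re-run that estimate while keeping track of the extra factor $\eps_n$ that this constraint produces; this is precisely the bookkeeping already recorded in \eqref{e:Case1-epsn}, so the task is to present it as a stand-alone statement.

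Concretely, I would first recall that on the set where $n_1, n_2 \ge N^{2+6\eps}$, $|y'_{\ell-1}-x'|\le n_1^{1/2+\eps}$ and $|x'-y'_\ell|\le n_2^{1/2+\eps}$, the local central limit theorem gives the comparison \eqref{e:LCLT-Case1}, which lets one replace $p_{n_1}(y'_{\ell-1},x')\,p_{n_2}(x',y'_\ell)$ by $C\,p_{n_1}(y'_{\ell-1},u')\,p_{n_2}(u',y'_\ell)$ for an arbitrary $u'$ in the copy $\T_N + xN$ of the torus containing $x'$. Summing over all $u' \in \T_N+xN$ in place of $x'$ and inserting the compensating factor $|\bfK|/N^d$, then summing over $x \in \Z^d$ and applying the Chapman--Kolmogorov identity $\sum_{u'\in\Z^d} p_{n_1}(y'_{\ell-1},u')\,p_{n_2}(u',y'_\ell) = p_n(y'_{\ell-1},y'_\ell)$, each admissible pair $(n_1,n_2)$ is seen to contribute at most $C\,|\bfK|\,N^{-d}\,p_n(y'_{\ell-1},y'_\ell)$. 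Under the restriction $n_1 \le \eps_n n$ there are at most $\eps_n n + 1$ such pairs, so the whole sum is bounded by $C\,|\bfK|\,(\eps_n n + 1)\,N^{-d}\,p_n(y'_{\ell-1},y'_\ell)$; since $|\bfK| \le |B_R(o)|$ is bounded by a constant depending only on $R$ and $\eps_n \to 0$ while $n \to \infty$, this equals $o(1)\,(n/N^d)\,p_n(y'_{\ell-1},y'_\ell)$, which is \eqref{e:Case1-extract}.

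Part~(ii) then follows from part~(i) by the symmetry $p_t(x',y') = p_t(y',x')$ of the lazy walk: relabelling $n_1 \leftrightarrow n_2$ rewrites the summand as $p_{n_2}(y'_\ell,x')\,p_{n_1}(x',y'_{\ell-1})$, turns the displacement restrictions into the corresponding ones for the pair $(y'_\ell, y'_{\ell-1})$, and turns the constraint $n_2 \le \eps_n n$ into the constraint of part~(i), so the identical argument applies with the roles of $y'_{\ell-1}$ and $y'_\ell$ interchanged, and $p_n(y'_\ell,y'_{\ell-1}) = p_n(y'_{\ell-1},y'_\ell)$. I do not expect any genuine obstacle here: the only point worth a remark is that the hypotheses $n_1, n_2 \ge N^{2+6\eps}$ under which the Case~1 comparison \eqref{e:LCLT-Case1} was justified are retained verbatim in \eqref{e:Case1-extract}, so nothing new has to be verified; and when the admissible range of $n_1$ is empty (i.e.\ when $N^{2+6\eps} > \eps_n n$) the bound holds trivially.
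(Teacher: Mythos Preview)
Your proposal is correct and follows exactly the paper's approach: the paper states that this corollary is ``merely a restatement of what was observed in \eqref{e:Case1-epsn} (with part (ii) below holding by symmetry)'', and you have simply written out in detail the Case~1 computation from the proof of Proposition~\ref{prop:weaker_capacity_ub} leading to \eqref{e:Case1-epsn}, together with the symmetry argument for~(ii). There is nothing to add.
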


The following is a restatement of the bounds of Cases 2a and 2b.

\begin{corollary}
\label{cor:Case2-extract}
For $\ell \ge 2$ we have \\
(i) \begin{equation}
\begin{split}
  \sum_{\substack{n_1+n_2=n \\ n_1, n_2 \ge N^{2+6\eps}}} \ 
  \sum_{x \in \Z^d} \ \sum_{\substack{x' \in \bfK +xN : \\ |y'_{\ell-1}-x'| > n_1^{\frac{1}{2}+\eps}}}
      p_{n_1}(y'_{\ell-1},x') p_{n_2}(x',y'_{\ell})
  = o(1) \frac{n}{N^d} p_n(y'_{\ell-1}, y'_\ell).
\end{split}
\label{e:Case2a-extract}
\end{equation}
(ii) \begin{equation}
\begin{split}
  \sum_{\substack{n_1+n_2=n \\ n_1, n_2 \ge N^{2+6\eps}}} \ 
  \sum_{x \in \Z^d} \ \sum_{\substack{x' \in \bfK +xN : \\ |x' - y'_\ell| > n_2^{\frac{1}{2}+\eps}}}
      p_{n_1}(y'_{\ell-1},x') p_{n_2}(x',y'_{\ell})
  = o(1) \frac{n}{N^d} p_n(y'_{\ell-1}, y'_\ell).
\end{split}
\label{e:Case2b-extract}
\end{equation}
\end{corollary}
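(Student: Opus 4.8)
The plan is to observe that this Corollary merely repackages the estimates already derived in Cases~2a and~2b inside the proof of Proposition~\ref{prop:weaker_capacity_ub}, so the proof will consist of pointing to those bounds and checking that they apply verbatim here. I would work with a fixed $(\tau,(y_\ell,\bfy_\ell)_{\ell=1}^\tau)\in\mathcal{G}_{\zeta,C_1}$, the context in which $y'_{\ell-1},y'_\ell$ are defined, and recall that $\bigcup_{x\in\Z^d}(\bfK+xN)=\varphi^{-1}(\bfK)$, the union being disjoint.

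For part~(i), I would bound the second factor trivially by $p_{n_2}(x',y'_\ell)\le 1$, so that the inner sum over $x$ and over $x'\in\bfK+xN$ with $|y'_{\ell-1}-x'|>n_1^{1/2+\eps}$ becomes a sum over a subset of $\Z^d$; using disjointness of the events $\{Y_{n_1}=x'\}$ this is at most
\[
  \sum_{\substack{x'\in\varphi^{-1}(\bfK)\\ |y'_{\ell-1}-x'|>n_1^{1/2+\eps}}} p_{n_1}(y'_{\ell-1},x')
  \le \P_{y'_{\ell-1}}\!\left[\,|Y_{n_1}-y'_{\ell-1}|>n_1^{1/2+\eps}\,\right].
\]
Applying the martingale maximal inequality~\eqref{e:maximal_ineq} coordinate-wise with a union bound, and using $n_1\ge N^{2+6\eps}$ so that $n_1^{2\eps}\ge N^{4\eps}$, the right-hand side is at most $C\exp(-cN^{4\eps})$. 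Summing over the at most $n$ admissible pairs $(n_1,n_2)$ gives $Cn\exp(-cN^{4\eps})$. I would then divide by the Gaussian lower bound $p_n(y'_{\ell-1},y'_\ell)\ge (c/n^{d/2})\exp(-C\log n)$, which is legitimate since the displacement satisfies $|y'_\ell-y'_{\ell-1}|\le C_1\sqrt{\log n}\,\sqrt n\le cn$ by the definition of $\mathcal{G}_{\zeta,C_1}$; cf.~\eqref{e:p_n-lbd}. This produces the ratio $Cn^{1+d/2}\exp(-cN^{4\eps}+C\log n)=o(n/N^d)$ as $N\to\infty$, which is exactly~\eqref{e:Case2a-extract}.

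For part~(ii) I would instead bound the first factor by $p_{n_1}(y'_{\ell-1},x')=p_{n_1}(x',y'_{\ell-1})\le 1$ and sum over $x'$ with $|x'-y'_\ell|>n_2^{1/2+\eps}$, obtaining $\P_{y'_\ell}[|Y_{n_2}-y'_\ell|>n_2^{1/2+\eps}]$; from here the estimate is identical to part~(i) with the roles of $y'_{\ell-1}$ and $y'_\ell$, and of $n_1$ and $n_2$, interchanged, exactly as was noted in Case~2b. I do not expect any genuine obstacle here: the only thing that needs checking is that the Case~2a/2b argument never used any structural feature of $\bfK$ beyond $\varphi^{-1}(\bfK)\subset\Z^d$, and in particular did not require $y'_{\ell-1}$ or $y'_\ell$ to be far from $\varphi^{-1}(\bfK)$ — the summation restrictions $|y'_{\ell-1}-x'|>n_1^{1/2+\eps}$ and $|x'-y'_\ell|>n_2^{1/2+\eps}$ already remove precisely the terms near those points that would otherwise demand the finer analysis.
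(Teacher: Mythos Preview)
Your proposal is correct and matches the paper's approach exactly: the paper states that this corollary is simply ``a restatement of the bounds of Cases 2a and 2b'' from the proof of Proposition~\ref{prop:weaker_capacity_ub}, and your argument reproduces those bounds verbatim (bound the other transition probability by $1$, apply the maximal inequality to get $C\exp(-cN^{4\eps})$, sum over $n_1$, and compare with the Gaussian lower bound~\eqref{e:p_n-lbd}). Your closing remark that the argument does not use the separation of $y'_{\ell-1},y'_\ell$ from $\varphi^{-1}(\bfK)$ is also correct, though in the paper's setting this is moot since $\ell\ge2$ places both points outside $\varphi^{-1}(B(\bfx_0,N^\zeta))$ anyway.
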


The following is a restatement of the bounds of Cases 3a and 3b combined.

\begin{corollary}
\label{cor:Case3-extract}
For $\ell \ge 2$ we have
\begin{equation}
\begin{split}
  \sum_{\substack{n_1+n_2=n \\ n_1 < N^{2+6\eps}}} \ 
  \sum_{x \in \Z^d} \ \sum_{\substack{x' \in \bfK +xN}}
      p_{n_1}(y'_{\ell-1},x') p_{n_2}(x',y'_{\ell})
  = o(1) \frac{n}{N^d} p_n(y'_{\ell-1}, y'_\ell).
\end{split}
\label{e:Case3-extract}
\end{equation}
\end{corollary}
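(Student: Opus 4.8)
The plan is to recognise that the left-hand side of \eqref{e:Case3-extract} is exactly the sum over $x$ of the contributions to \eqref{e:A(x)-ub} coming from the regime $n_1 < N^{2+6\eps}$, and hence to re-use the bounds already obtained for Cases 3a and 3b in the proof of Proposition \ref{prop:weaker_capacity_ub}. So the first step is to split the inner sum over $x'$ according to whether $|x'-y'_{\ell-1}| \le N^{\delta/2-\eps}$ (the Case 3a part) or $|x'-y'_{\ell-1}| > N^{\delta/2-\eps}$ (the Case 3b part), and to treat the two pieces separately.

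For the Case 3a piece, the key move is the LCLT comparison $p_{n_2}(x',y'_\ell) \le C\,p_n(y'_{\ell-1},y'_\ell)$, justified exactly as for \eqref{e:Case3a-compare}: since $n_1 < N^{2+6\eps} = o(n)$ (using $\delta > 2$), one has $n_2 = n-n_1 = n(1+o(1))$, and the extra terms $|x'-y'_{\ell-1}|^2/n_2$ and $|y'_\ell-y'_{\ell-1}|\,|x'-y'_{\ell-1}|/n_2$ in the Gaussian exponent both vanish thanks to $|x'-y'_{\ell-1}| \le N^{\delta/2-\eps}$ and the displacement bound $|y'_\ell-y'_{\ell-1}| \le C_1\sqrt{\log n}\,\sqrt n$ built into $\mathcal{G}_{\zeta,C_1}$. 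Pulling $p_n(y'_{\ell-1},y'_\ell)$ out of the sum leaves $\sum_{n_1<N^{2+6\eps}}\sum_{x}\sum_{x'\in\bfK+xN} p_{n_1}(y'_{\ell-1},x')$, which is bounded by $C\,N^{-\zeta(d-2)}$ via Lemma \ref{lem:Green-bnd}(i) (applicable because $\varphi(y'_{\ell-1}) \notin B(\bfx_0,N^\zeta)$ for $\ell \ge 2$). Finally $N^{-\zeta(d-2)} = o(N^{\delta-d}) = o(n/N^d)$ by the second inequality in \eqref{e:choice-zeta}, which closes this piece.

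For the Case 3b piece, I would bound $p_{n_1}(y'_{\ell-1},x')$ by the Gaussian upper bound \eqref{e:gaussian_ub_lb}, so that $|x'-y'_{\ell-1}| > N^{\delta/2-\eps}$ together with $n_1 < N^{2+6\eps}$ yields a factor $\exp(-cN^{\delta-2\eps}/N^{2+6\eps}) = \exp(-cN^{\delta-2-8\eps})$, superpolynomially small for $\eps$ small since $\delta>2$. The surviving sum $\sum_{x'} p_{n_2}(x',y'_\ell) \le 1$ and $\sum_{n_1\ge 1} n_1^{-d/2} < \infty$ for $d \ge 3$, so this piece is at most $C\exp(-cN^{\delta-2-8\eps})$; dividing through by the Gaussian lower bound $p_n(y'_{\ell-1},y'_\ell) \ge c\,n^{-d/2}e^{-C\log n}$ of \eqref{e:p_n-lbd} (valid since $|y'_\ell-y'_{\ell-1}| \le c\,n$) leaves $o(n/N^d)\,p_n(y'_{\ell-1},y'_\ell)$. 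Adding the two pieces gives \eqref{e:Case3-extract}.

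There is essentially no new obstacle here beyond what was already dispatched in the proof of Proposition \ref{prop:weaker_capacity_ub}; the only point needing a little attention is that the LCLT comparison in the Case 3a step holds \emph{uniformly} over all admissible $n_1,x,x'$, i.e.\ that the exponent errors are $o(1)$ with no residual $x'$-dependence. This is precisely what the restriction $|x'-y'_{\ell-1}| \le N^{\delta/2-\eps}$ guarantees, and the estimate is identical to the one already carried out for \eqref{e:Case3a-compare}, so no further work is required.
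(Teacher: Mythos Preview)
Your proposal is correct and follows exactly the paper's approach: the paper states this corollary as ``a restatement of the bounds of Cases 3a and 3b combined'' from the proof of Proposition~\ref{prop:weaker_capacity_ub}, and your argument reproduces precisely those two cases (the LCLT comparison plus Lemma~\ref{lem:Green-bnd}(i) for $|x'-y'_{\ell-1}|\le N^{\delta/2-\eps}$, and the Gaussian upper bound yielding the stretched-exponential factor for $|x'-y'_{\ell-1}|> N^{\delta/2-\eps}$).
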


\subsection{Proof of Proposition \ref{prop:capacity_ub}}
\label{ssec:capacity_ub}


%
%

In this section we need $C_1$ large enough so that we have
\begin{equation}
\label{e:C_1-choice}
 e^{-d f(n)^2} N^d n^{1+3d/2}
 \to 0.
\end{equation}

We have
\[ \mathbf{P}_{y'_{\ell-1}} [ Y_n = y'_\ell,\, \text{$Y_t \in \varphi^{-1}(\mathbf{K})$ for some $0 \le t < n$} ]
  = \mathbf{P}_{y'_{\ell-1}} \left[ \cup_{x \in \mathbb{Z}^d} A(x) \right], \]
where
\[ A(x)
  = \left\{ Y_n = y'_\ell, \text{$Y_t \in x N + \mathbf{K}$ for some $0 \le t < n$} \right\}. \]
The strategy is to estimate the probability via the Bonferroni inequalities:
\begin{equation}
\label{e:Bonferroni}
\begin{split}
  \sum_{x} \mathbf{P}_{y'_{\ell-1}} [ A(x) ]
    - \sum_{x_1 \not= x_2} \mathbf{P}_{y'_{\ell-1}} [ A(x_1) \cap A(x_2) ]
	&\le \mathbf{P}_{y'_{\ell-1}} \left[ \cup_{x \in \mathbb{Z}^d} A(x) \right] \\
	&\le \sum_{x} \mathbf{P}_{y'_{\ell-1}} [ A(x) ]. 
\end{split}
\end{equation}

We are going to use a parameter $A_n$ that we choose as $A_n = 10 \log \log n$ 
so that in particular $A_n \to \infty$.

\subsubsection{The main contribution}

In this section, we consider only stretches with $|y'_\ell - y'_{\ell-1}| \le A_n \sqrt{n}$.
We will show that the main contribution in \eqref{e:Bonferroni} comes from $x$ in the set:
\[ G 
   = \left\{ x \in \mathbb{Z}^d : |y'_{\ell-1} - x N| \le A_n^2 \sqrt{n},\, 
     |x N - y'_{\ell}| \le A_n^2 \sqrt{n} \right\}. \]
We first examine $\P_{y'_{\ell-1}} [ A(x) ]$ for $x \in G$. 
Putting $B_{0,x} = B(\mathbf{x}_0 + x N, N^{\zeta})$, 
let $n_1$ be the time of the last visit to $\partial B_{0,x}$ before hitting 
$\mathbf{K}+xN$, let $n_1 + n_2$ be the time of the first hit of 
$\mathbf{K}+xN$, and let $n_3 = n - n_1 - n_2$. 
See Figure \ref{fig:decomposition} for an illustration of this decomposition.
\begin{figure}%
\begin{center}
\begin{tikzpicture}
\draw (0,0) circle (3cm);
\node[anchor=west] at (2.2,2.2) {$B_{0,x}$};
\draw[fill=black] plot[smooth, tension=.7] 
coordinates {(0.25,0) (0,-0.5) (-0.5,-0.3) (-0.3,0) (0,0.3) (0.2,0.2) (0.25,0)};
\node[anchor=west] at (0.2,-0.2)  {$\mathbf{K}+xN$};
\filldraw[black] (0,0.3) circle (1pt);
\node[anchor=west] at (0,0.5) {$x^\prime$};
\filldraw[black] (-2.12, 2.12) circle (1pt);
\node[anchor=west] at (-2.2,2.5) {$z^\prime$};

\filldraw[black] (-6, -1) circle (1pt);
\node[anchor=west] at (-6,-1) {$y_{\ell-1}^\prime$};
\filldraw[black] (6, -2.5) circle (1pt);
\node[anchor=west] at (6,-2.5) {$y_{\ell}^\prime$};

\color{green}
\draw (-6,-1) .. controls (-5.8,-0.5) and (-5.6,-1) .. (-5.5,-0.5)
      (-5.5,-0.5) .. controls (-5.4,-0.5) and (-5.2,-1) .. (-5.0,-0.5)
      (-5.0,-0.5) .. controls (-4.8,0) and (-4.6,-0.5) .. (-4.5,-0.75)
      (-4.5,-0.75) .. controls (-4.4,-0.5) and (-4.2,0) .. (-4.0,-0.5)
      (-4.0,-0.5) .. controls (-3.8,0) and (-3.6,0.5) .. (-3.5,0.5)
      (-3.5,0.5) .. controls (-3.4,0.5) and (-3.2,1) .. (-3.0,0.5)
      (-3.0,0.5) .. controls (-2.8,1) and (-2.6,0.5) .. (-2.5,1) 
      (-2.5,1) .. controls (-2.4,0.75) and (-2.2,1) .. (-2.0,1.5)
      (-2.0,1.5).. controls (-1.5,1) and (-2,1.5) .. (-2.12, 2.12);

\color{red}  
\draw (-2.12, 2.12) .. controls (-2,2) and (-1.75,1.5) .. (-1.5,1.75)
	  (-1.5, 1.75) .. controls (-1.25,1.5) and (-1, 1) .. (-0.75,0.75)
	  (-0.75, 0.75) .. controls (-0.5,1.5) and (-0.25,0.5) .. (0,0.3);

\color{blue}	  
\draw (0,0.3) .. controls (0.5,0.5) and (0.75,0) .. (1,0.5)
	  (1,0.5) .. controls (1.25,-0.5) and (1.5,-1) .. (1.75,-0.5)
	  (1.75,-0.5) .. controls (2,0) and (2.25,-0.5) .. (2.5,-1)
	  (2.5,-1).. controls (2.6,-0.5) and (2.8,-1) .. (3,-0.5)
      (3,-0.5) .. controls (3.2,-0.5) and (3.4,-1) .. (3.5,-0.5)
      (3.5,-0.5) .. controls (3.6,0) and (3.8,-0.5) .. (4.0,-0.75)
      (4.0,-0.75) .. controls (4.2,-1) and (4.4,-0.5) .. (4.5,-0.75)
      (4.5,-0.75) .. controls (4.6,-1) and (4.8,-1.5) .. (5.0,-2)
      (5.0,-2) .. controls (5.2,-1.5) and (5.4,-1) .. (5.5,-1.5)
      (5.5,-1.5) .. controls (5.6,-1.75) and (5.8,-2) .. (6.0,-2.5);

\color{black}      
\end{tikzpicture}
\caption{The decomposition of a path hitting a copy of $\bfK$ into three subpaths (not to scale).}%
\label{fig:decomposition}%
\end{center}
\end{figure}
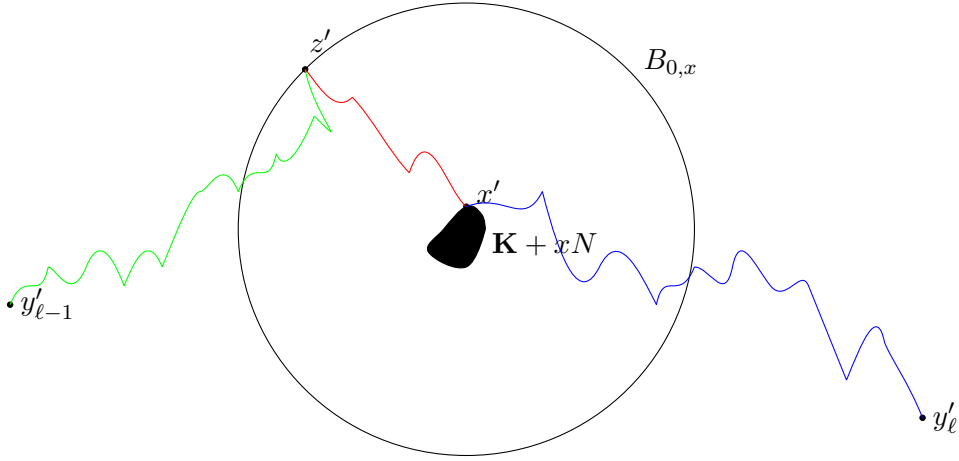
Then we can write:
\begin{equation}
\label{e:decomp}
\begin{split}
  \P_{y'_{\ell-1}} [ A(x) ]
	&= \sum_{n_1 + n_2 + n_3 = n} \sum_{z' \in \partial B_{0,x}} \sum_{x' \in \mathbf{K} + x N}
	   \widetilde{p}^{(x)}_{n_1} (y'_{\ell-1},z') \\
	&\qquad\quad \times \mathbf{P}_{z'} [ H_{\mathbf{K}+xN} = n_2 < \xi_{B_{0,x}},\, Y_{H_{\mathbf{K}+xN}} = x' ] \,
		 p_{n_3}(x',y'_\ell), 
\end{split}
\end{equation}
where 
\[ \widetilde{p}^{(x)}_{n_1} (y'_{\ell-1},z')
   = \mathbf{P}_{y'_{\ell-1}} [ Y_{n_1} = z',\, \text{$Y_t \not\in \mathbf{K}+xN$ for $0 \le t \le n_1$} ]. \]

We are going to use another parameter $\eps_n$ that will need to 
go to $0$ slowly. We choose it as $\eps_n = (10 \log \log n)^{-1} \to 0$.
The main contribution to \eqref{e:decomp} will be when 
$n_1 \ge \eps_n n$, $n_3 \ge \eps_n n$ and $n_2 \le N^{2\delta/d} \sim n^{2/d}$.
Therefore, we split the sum over $n_1, n_2, n_3$ in \eqref{e:decomp} into a
main contribution $I(x)$ and an error term $II(x)$.
In order to define these, let
\begin{equation}
\begin{split}
\label{e:F-def}
    &F(n_1,n_2,n_3,x,y'_{\ell-1},y'_\ell) \\
    &\quad = \sum_{z' \in \partial B_{0,x}} \sum_{x' \in \mathbf{K} + x N}\widetilde{p}^{(x)}_{n_1} (y'_{\ell-1},z')  \mathbf{P}_{z'} [ H_{\mathbf{K}+xN} = n_2 < \xi_{B_{0,x}},\, Y_{H_{\mathbf{K}+xN}} = x' ]
		 p_{n_3}(x',y'_\ell).
\end{split}
\end{equation}
Then with
\begin{equation}
\label{e:decomp-mod}
\begin{split}
    I(x)
    &:= \sum_{\substack{n_1 + n_2 + n_3 = n \\ n_1, n_3 \ge \eps_n n,\, n_2 \le N^{2 \delta/d}}}
	   F(n_1,n_2,n_3,x,y'_{\ell-1},y'_\ell) \\
	II(x)
	&:= \sum_{\substack{n_1 + n_2 + n_3 = n \\ \text{$n_1 < \eps_n n$ or $n_3 < \eps_n n$} \\ \text{or $n_2 > N^{2 \delta/d}$}}}
	   F(n_1,n_2,n_3,x,y'_{\ell-1},y'_\ell)
\end{split}
\end{equation}
we have
\begin{equation*}
\begin{split}
  \P_{y'_{\ell-1}} [ A(x) ]
	= I(x) + II(x).
\end{split}
\end{equation*}

\begin{lemma}
\label{lem:p_n3}
When $x \in G$ and $n_3 \ge \eps_n n$, we have
\[ p_{n_3}(x',y'_\ell)
   = (1 + o(1)) p_{n_3}(u',y'_\ell) \quad \text{for all $x' \in \mathbf{K}+xN$ 
	   and all $u' \in \mathbb{T}_N + xN$}, \]
with the $o(1)$ term uniform in $x'$ and $u'$.
\end{lemma}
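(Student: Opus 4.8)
The plan is to compare the two heat-kernel values $p_{n_3}(x',y'_\ell)$ and $p_{n_3}(u',y'_\ell)$ directly via the Local Central Limit Theorem. Both are of the form $\sum_{w \in \Z^d} p_{n_3}(x', y_\ell N + \bfy_\ell + wN)$ once we unfold to the lattice, so it suffices to show the pointwise ratio of lattice transition probabilities is $1 + o(1)$ uniformly. First I would record the relevant length scales: since $x \in G$ we have $|y'_{\ell-1} - xN| \le A_n^2\sqrt n$ and $|xN - y'_\ell| \le A_n^2 \sqrt n$; hence for $x' \in \bfK + xN$ and $u' \in \T_N + xN$ both $|x' - y'_\ell|$ and $|u' - y'_\ell|$ are at most $A_n^2 \sqrt n + CN + \mathrm{diam}(\bfK) = O(A_n^2 \sqrt n)$ (using $N \ll \sqrt n$ because $n = \lfloor N^\delta\rfloor$ with $\delta > 2$, and $\mathrm{diam}(\bfK) = O(R)$). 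Since $n_3 \ge \eps_n n$, the ratios $|x' - y'_\ell|^2/n_3$ and $|u' - y'_\ell|^2/n_3$ are $O(A_n^4/\eps_n) = O((\log\log n)^{\text{const}})$, so we are comfortably in the LCLT regime $|x'-y'_\ell| < \rho n_3$ for large $N$, and the LCLT error factor $\exp\{O(1/n_3 + |x'-y'_\ell|^4/n_3^3)\}$ is $1 + o(1)$ uniformly.

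With the LCLT applicable, I would write $p_{n_3}(x',y'_\ell) = \bar p_{n_3}(x' - y'_\ell)\exp\{O(\cdot)\}$ and likewise for $u'$, so the ratio equals, up to $1+o(1)$,
\[
  \exp\left( -\frac{d|x' - y'_\ell|^2}{n_3} + \frac{d|u' - y'_\ell|^2}{n_3} \right).
\]
The exponent is controlled by $\frac{d}{n_3}\bigl| |x'-y'_\ell|^2 - |u'-y'_\ell|^2 \bigr| = \frac{d}{n_3}\bigl| \langle x' - u', (x'-y'_\ell) + (u'-y'_\ell)\rangle \bigr| \le \frac{d}{n_3}|x'-u'|\,\bigl(|x'-y'_\ell| + |u'-y'_\ell|\bigr)$. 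Now $|x' - u'| \le 2CN + \mathrm{diam}(\bfK) = O(N)$ since $x'$ and $u'$ lie in the same translated fundamental domain (plus the diameter of $\bfK$), while the second factor is $O(A_n^2\sqrt n)$ as above. Hence the exponent is $O\bigl( N \cdot A_n^2 \sqrt n / (\eps_n n)\bigr) = O\bigl( A_n^2 N /(\eps_n \sqrt n)\bigr)$, which tends to $0$ because $N/\sqrt n = N^{1-\delta/2} \to 0$ and $A_n^2/\eps_n$ grows only polylogarithmically. This gives $p_{n_3}(x',y'_\ell) = (1+o(1))p_{n_3}(u',y'_\ell)$ with the $o(1)$ uniform in $x' \in \bfK + xN$ and $u' \in \T_N + xN$, as claimed.

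One technical point worth handling carefully: the statement is about transition probabilities on $\Z^d$ with $y'_\ell$ a fixed lattice point, so no unfolding over the torus is actually needed and I would just apply the LCLT bound once for each pair $(x', y'_\ell)$ and $(u', y'_\ell)$; the only subtlety is making sure the LCLT hypothesis $|x' - y'_\ell| < \rho n_3$ holds, which I addressed above, and that the error terms are genuinely uniform — this is automatic since all the bounds depend only on the scales $A_n$, $\eps_n$, $N$, $n$ and not on the particular $x', u'$. The main (minor) obstacle is simply bookkeeping the constants and verifying that $A_n^2 N/(\eps_n\sqrt n) \to 0$ with the chosen $A_n = 10\log\log n$ and $\eps_n = (10\log\log n)^{-1}$; since $\delta > 2$ is fixed this is immediate, so there is no real difficulty — the lemma is an LCLT comparison at well-separated but still-diffusive scales.
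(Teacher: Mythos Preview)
Your proposal is correct and follows essentially the same approach as the paper: apply the LCLT to both $p_{n_3}(x',y'_\ell)$ and $p_{n_3}(u',y'_\ell)$ and show the difference of the Gaussian exponents tends to zero using $|x'-u'| = O(N)$, $|y'_\ell - x'| = O(A_n^2\sqrt n)$, and $n_3 \ge \eps_n n$. The paper's proof is just a terser version of your argument (and your aside about unfolding over the torus is indeed unnecessary, as you yourself note).
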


\begin{proof}
By the LCLT, we have
\begin{equation*}
\begin{split}
 p_{n_3}(x',y'_\ell) 
 &= \frac{C}{n_3^{d/2}}\exp\left(-\frac{d|y'_{\ell}-x'|^2}{n_3}\right)
 (1+o(1)),\\ 
 p_{n_3}(u',y'_\ell) 
 &= \frac{C}{n_3^{d/2}}\exp\left(-\frac{d|y'_{\ell}-u'|^2}{n_3}\right)
 (1+o(1)). 
\end{split}
\end{equation*}
We compare the exponents
\begin{equation*}
\begin{split}
\left|\frac{d|y'_{\ell}-x'|^2}{n_3} - \frac{d|y'_{\ell}-u'|^2}{n_3}\right|
&\leq \frac{d|x'-u'|^2}{n_3} 
+ \frac{2d|\langle x'-u',y'_{\ell}-x'\rangle|}{n_3} \\
&\leq C\frac{N^2}{n_3} + \frac{CN\cdot A_n^2\sqrt{n}}{n_3}
\to 0,
\end{split}
\end{equation*}
as $N\to\infty$.
\end{proof}

\begin{lemma}
\label{lem:p_n1}
When $x \in G$ and $n_1 \ge \eps_n n$, we have
\[ \widetilde{p}^{(x)}_{n_1}(y'_{\ell-1},z')
   = (1 + o(1)) p_{n_1}(y'_{\ell-1},u') \quad \text{for all $z' \in \partial B_{0,x}$ 
	   and all $u' \in \mathbb{T}_N + xN$}, \]
with the $o(1)$ term uniform in $z'$ and $u'$.
\end{lemma}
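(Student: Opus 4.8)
The plan is to peel off the event that the walk hits $\mathbf{K}+xN$. Since $y'_{\ell-1}\notin B_{0,x}$ (for $\ell\ge 2$ the separation condition in \eqref{e:def_G} forces $\varphi(y'_{\ell-1})\notin B(\mathbf{x}_0,N^\zeta)$, hence $y'_{\ell-1}$ lies outside every translate $B_{0,x}$) and $z'\in\partial B_{0,x}$ lies outside $B_{0,x}\supset\mathbf{K}+xN$, writing $H=H_{\mathbf{K}+xN}$ we have $\widetilde{p}^{(x)}_{n_1}(y'_{\ell-1},z') = p_{n_1}(y'_{\ell-1},z') - \mathbf{P}_{y'_{\ell-1}}[Y_{n_1}=z',\,H\le n_1]$. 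The lemma then reduces to two estimates, uniform in $z'$, $u'$, $x\in G$: (a) $p_{n_1}(y'_{\ell-1},z') = (1+o(1))\,p_{n_1}(y'_{\ell-1},u')$ for all $z'\in\partial B_{0,x}$, $u'\in\T_N+xN$; and (b) $\mathbf{P}_{y'_{\ell-1}}[Y_{n_1}=z',\,H\le n_1] = o(1)\,p_{n_1}(y'_{\ell-1},z')$.

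I would prove (a) exactly as in Lemma \ref{lem:p_n3}. For $x\in G$ we have $|y'_{\ell-1}-xN|\le A_n^2\sqrt n$, while $z'$ and $u'$ both lie within $CN$ of $xN$ (using $\zeta<1$); since $\delta>2$ gives $N=o(\sqrt n)$, both are within $2A_n^2\sqrt n$ of $y'_{\ell-1}$ for large $N$. Feeding $|z'-u'|\le CN$ and $|y'_{\ell-1}-z'|\le 2A_n^2\sqrt n$ into the LCLT, the difference of Gaussian exponents is $O(N^2/n_1)+O(NA_n^2\sqrt n/n_1)\to 0$ because $n_1\ge\eps_n n$ with $\eps_n$ decaying slowly, and the multiplicative LCLT errors $O(1/n_1+|y'_{\ell-1}-z'|^4/n_1^3)$ vanish as well.

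For (b) I would split according to whether $H\le n_1/2$ or $n_1/2<H\le n_1$. If $H\le n_1/2$, the remaining time $n_1-H\ge n_1/2\ge 1$, so by \eqref{e:gaussian_ub_lb} the transition probability from $Y_H$ to $z'$ is $\le Cn_1^{-d/2}$; summing out the first-entrance time and location of $\mathbf{K}+xN$ and using $\mathbf{P}_{y'_{\ell-1}}[H<\infty]\le\sum_{w'\in\mathbf{K}+xN}G(y'_{\ell-1},w')\le C|\bfK|N^{-\zeta(d-2)}$ (a union bound over $w'$, with $|y'_{\ell-1}-w'|\ge N^\zeta-\mathrm{diam}(\bfK)\ge N^\zeta/2$ by the separation condition), this part is $\le C|\bfK|n_1^{-d/2}N^{-\zeta(d-2)}$. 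If $n_1/2<H\le n_1$, I bound $\mathbf{P}_{y'_{\ell-1}}[H=m,Y_m=w']\le p_m(y'_{\ell-1},w')\le C/m^{d/2}\le C\eps_n^{-d/2}n^{-d/2}$ (since $m>n_1/2\ge\eps_n n/2$) and then $\sum_m\sum_{w'\in\mathbf{K}+xN}p_{n_1-m}(w',z')\le\sum_{w'\in\mathbf{K}+xN}G(w',z')\le C|\bfK|N^{-\zeta(d-2)}$, using $|w'-z'|\ge N^\zeta-\mathrm{diam}(\bfK)\ge N^\zeta/2$ since $z'\in\partial B_{0,x}$ and $w'$ is near its centre. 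Combining, $\mathbf{P}_{y'_{\ell-1}}[Y_{n_1}=z',\,H\le n_1]\le C|\bfK|\eps_n^{-d/2}n^{-d/2}N^{-\zeta(d-2)}$, while the Gaussian lower bound \eqref{e:gaussian_ub_lb} (applicable as $|y'_{\ell-1}-z'|\le 2A_n^2\sqrt n\le cn_1$) gives $p_{n_1}(y'_{\ell-1},z')\ge cn_1^{-d/2}\exp(-C|y'_{\ell-1}-z'|^2/n_1)\ge cn^{-d/2}\exp(-CA_n^4/\eps_n)$, so the ratio in (b) is $\le C|\bfK|\eps_n^{-d/2}\exp(CA_n^4/\eps_n)N^{-\zeta(d-2)}$.

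Finally, $|\bfK|\le|B_R(o)|$ is bounded, and with $A_n=10\log\log n$, $\eps_n=(10\log\log n)^{-1}$ both $\eps_n^{-d/2}$ and $\exp(CA_n^4/\eps_n)=\exp(O((\log\log n)^5))$ are $n^{o(1)}$, whereas $N^{-\zeta(d-2)}$ is a fixed negative power of $n$ (recall $n=\lfloor N^\delta\rfloor$), so the ratio tends to $0$ and (b) follows; together with (a) this gives the lemma. I expect the main obstacle to be precisely this balancing in step (b): the factor $\exp(CA_n^4/\eps_n)$ is a genuine large-deviation cost from the Gaussian lower bound on $p_{n_1}$ — the endpoint $z'$ may be $\Theta(A_n^2\sqrt n)$ away while $n_1$ can be as small as $\eps_n n$ — and it must be beaten by the polynomial gain $N^{-\zeta(d-2)}$ coming from the Green's-function estimate, which works only because $(\log\log n)^5=o(\log n)$; this is exactly why $A_n$ and $\eps_n$ are chosen to grow, respectively decay, so slowly.
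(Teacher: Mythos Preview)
Your proof is correct and follows essentially the same approach as the paper: reduce to showing the hitting contribution $\mathbf{P}_{y'_{\ell-1}}[Y_{n_1}=z',\,H_{\mathbf{K}+xN}\le n_1]$ is $o(1)\,p_{n_1}(y'_{\ell-1},z')$, establish the Gaussian lower bound $p_{n_1}(y'_{\ell-1},z')\ge n^{-d/2+o(1)}$, and bound the hitting term by splitting the hitting time at $n_1/2$ and extracting a factor $N^{-\zeta(d-2)}$. The only cosmetic difference is that the paper applies the Gaussian upper bound to both factors $p_m$ and $p_{n_1-m}$ and sums via a dyadic decomposition, whereas you bound one factor uniformly by $Cn_1^{-d/2}$ and absorb the other into a Green's function; the resulting estimates are the same.
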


\begin{proof}
The statement will follow if we show the following claim:
\[ \mathbf{P}_{y'_{\ell-1}} [ Y_{n_1} = z',\, \text{$Y_t \in \mathbf{K}+xN$ for some $0 \le t \le n_1$} ]
   = o(1) p_{n_1} (y'_{\ell-1}, z'). \]
For this, observe that by \eqref{e:gaussian_ub_lb} we have
\begin{equation}
\label{e:rhs-lb2}
\begin{split}
  p_{n_1}(y'_{\ell-1}, z')
  &\ge \frac{c}{n_1^{d/2}} \exp \left( -C \frac{|z' - y'_{\ell-1}|^2}{n_1} \right) \\
	&\ge \frac{c}{n^{d/2}} \exp \left( -c \frac{A_n^2 n + N^{\zeta}}{\eps_n n} \right) \\
  &\ge \frac{c}{n^{d/2}} \exp \left( -C (\log \log n)^{O(1)} \right) \\
	&= n^{-d/2+o(1)}.
\end{split}
\end{equation}
On the other hand, using the Markov property, \eqref{e:gaussian_ub_lb}, and the fact
that for $x' \in \mathbf{K}+xN$ we have $|y'_{\ell-1} - x'| \ge c N^{\zeta}$
and $|x' - z'| \ge c N^{\zeta}$, we get
\begin{equation}
\label{e:lhs-ub2}
\begin{split}
  &\mathbf{P}_{y'_{\ell-1}} [ Y_{n_1} = z',\, \text{$Y_t \in \mathbf{K}+xN$ for some $0 \le t \le n_1$} ] \\
	&\qquad \le \sum_{1 \le m \le n_1-1} \sum_{x' \in \mathbf{K}+xN} p_m(y'_{\ell-1},x') \, 
	    p_{n_1-m}(x',z') \\
	&\qquad \le C \sum_{1 \le m \le n_1-1} \frac{1}{m^{d/2}} 
	    \frac{1}{(n_1-m)^{d/2}} \exp \left( - c \frac{N^{2\zeta}}{m} \right)
			\exp \left( -c \frac{N^{2\zeta}}{n_1-m} \right)\\
	&\qquad \le C \sum_{1 \le m \le n_1/2} \frac{1}{m^{d/2}} 
	    \frac{1}{(n_1-m)^{d/2}} \exp \left( - c \frac{N^{2\zeta}}{m} \right)
			\exp \left( -c \frac{N^{2\zeta}}{n_1-m} \right).
\end{split}
\end{equation} 
We note here that the sum over $1 \le m \le n_1/2$ and the sum over $n_1/2 \le m \le n_1-1$ are symmetric.
Bounding the sum over $1 \le m \le n_1/2$ gives
\begin{equation}
\label{e:lhs-ub3}
\begin{split}
	&\frac{C}{n_1^{d/2}} \sum_{1 \le m \le n_1/2} \frac{1}{m^{d/2}} \exp \left( - c \frac{N^{2\zeta}}{m} \right) \\
    &\quad = \frac{C}{n_1^{d/2}} \left[ \sum_{1 \le m \le N^{2 \zeta}} \frac{1}{m^{d/2}}\exp \left( - c \frac{N^{2\zeta}}{m} \right) + \sum_{N^{2 \zeta} < m \le n_1/2} \frac{1}{m^{d/2}}\exp \left( - c \frac{N^{2\zeta}}{m} \right)\right]. 
\end{split}
\end{equation} 
In the second sum we can bound the exponential by $1$, and get the upper bound
\[ \frac{C}{n_1^{d/2}} N^{\zeta(2-d)} = o(n^{-d/2+o(1)}). \]
In the first sum, we group terms on dyadic scales $k$ so that 
$2^k \le N^{2 \zeta}/m \le 2^{k+1}$, $k = 0, \dots, \lfloor \log_2 N^{2 \zeta}\rfloor +1 $.
This gives the bound
\[ \frac{C}{n_1^{d/2}} \sum_{k=0}^{\lfloor\log_2 N^{2 \zeta}\rfloor +1 }
   \frac{(2^{k+1})^{d/2}}{(N^{2 \zeta})^{d/2}} \exp \left( -c 2^k \right)
	 \le \frac{C}{n_1^{d/2}} \frac{1}{N^{\zeta d}}, \] 
which is also $o(n^{-d/2+o(1)})$. 
\end{proof}

In order to apply the previous two lemmas to analyze $I(x)$ in \eqref{e:decomp-mod}, we first define a modification of $F$ in \eqref{e:F-def}, where $z'$ and $x'$ are both replaced by a vertex $u' \in \T_N + xN$. That is, we define 
\begin{equation*}
\begin{split}
    &\widetilde{F}(n_1,n_2,n_3,u',x,y'_{\ell-1},y'_\ell) \\
    &\quad = \sum_{z' \in \partial B_{0,x}} \sum_{x' \in \mathbf{K} + x N} p_{n_1} (y'_{\ell-1},u')  \mathbf{P}_{z'} [ H_{\mathbf{K}+xN} = n_2 < \xi_{B_{0,x}},\, Y_{H_{\mathbf{K}+xN}} = x' ]
		 p_{n_3}(u',y'_\ell).
\end{split}
\end{equation*}
Then the Lemmas \ref{lem:p_n3} and \ref{lem:p_n1} allow us to write, for $x \in G$, the main term $I(x)$ in \eqref{e:decomp-mod} as
\begin{equation}
\label{e:decomp-2}
\begin{split}
  I(x)
   & = \sum_{\substack{n_1 + n_2 + n_3 = n \\ n_1, n_3 \ge \eps_n n,\, n_2 \le N^{2\delta/d}}} F(n_1,n_2,n_3,x,y'_{\ell-1},y'_\ell) \\
    & = \frac{1 + o(1)}{N^d} \sum_{u' \in \mathbb{T}_N+xN} \sum_{\substack{n_1 + n_2 + n_3 = n \\ n_1, n_3 \ge \eps_n n,\, n_2 \le N^{2 \delta/d}}} \widetilde{F}(n_1,n_2,n_3,u',x,y'_{\ell-1},y'_\ell)\\ 
	&= \frac{1 + o(1)}{N^d} \sum_{u' \in \mathbb{T}_N+xN} 
	   \sum_{\substack{n_1 + n_2 + n_3 = n \\ n_1, n_3 \ge \eps_n n \\ n_2 \le N^{2 \delta/d}}} 
	   p_{n_1} (y'_{\ell-1}, u') \, p_{n_3}(u', y'_\ell) \\
	&\qquad \times\sum_{z' \in \partial B_{0,x}} \mathbf{P}_{z'} [ H_{\mathbf{K}+xN} = n_2 < \xi_{B_{0,x}} ],
\end{split}
\end{equation}
where the sum over $x'$ is removed since
\begin{equation*}
    \sum_{x' \in \mathbf{K} + x N} \mathbf{P}_{z'} [ H_{\mathbf{K}+xN} = n_2 < \xi_{B_{0,x}},\, Y_{H_{\mathbf{K}+xN}} = x' ] 
    = \mathbf{P}_{z'} [ H_{\mathbf{K}+xN} = n_2 < \xi_{B_{0,x}} ].
\end{equation*}

\begin{lemma}
\label{lem:conv-and-sum}
Assume that $n_1, n_3 \ge \eps_n n$ and $n_2 \le N^{2 \delta/d}$.\\
(i) We have
\[ p_{n_1+n_3} (y'_{\ell-1}, y'_\ell)
   = (1 + o(1)) p_n(y'_{\ell-1}, y'_\ell). \]
(ii) We have
\begin{equation}
\label{e:sum-xinG}
   \sum_{x \in G} \sum_{u' \in \mathbb{T}_N+xN} 
      p_{n_1} (y'_{\ell-1}, u') \, p_{n_3}(u', y'_\ell)  
	 = (1 + o(1)) p_{n_1+n_3} (y'_{\ell-1}, y'_\ell). 
\end{equation}
\end{lemma}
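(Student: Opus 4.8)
The plan is to obtain the sharp asymptotic in part (i) from the LCLT and to control everything in part (ii) via the Gaussian bounds \eqref{e:gaussian_ub_lb}.

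\emph{Part (i).} Since $|y'_\ell - y'_{\ell-1}| \le A_n \sqrt{n} = o(n)$, the range condition $|x'| < \rho t$ in the LCLT is met for both $t = n$ and $t = n_1 + n_3$ once $N$ is large, and the multiplicative error $\exp\{O(t^{-1} + |y'_\ell-y'_{\ell-1}|^4 t^{-3})\}$ equals $1 + o(1)$ because $|y'_\ell - y'_{\ell-1}|^4/n^3 \le A_n^4/n \to 0$. Thus both $p_n(y'_{\ell-1},y'_\ell)$ and $p_{n_1+n_3}(y'_{\ell-1},y'_\ell)$ equal $(1+o(1))\bar{p}_t(y'_\ell - y'_{\ell-1})$ for the respective $t$, and it remains to compare $\bar{p}_n$ with $\bar{p}_{n_1+n_3}$. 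Writing $n_1 + n_3 = n - n_2$ with $0 \le n_2 \le N^{2\delta/d}$, we have $n_2/n \le 2 N^{\delta(2/d - 1)} \to 0$ (as $d \ge 3$), so $(n_1 + n_3)^{-d/2} = (1+o(1)) n^{-d/2}$; and the two Gaussian exponents $d|y'_\ell - y'_{\ell-1}|^2/n$ and $d|y'_\ell - y'_{\ell-1}|^2/(n_1+n_3)$ differ by at most $d|y'_\ell - y'_{\ell-1}|^2 n_2/(n(n_1+n_3)) \le d A_n^2 n_2/(n_1+n_3) \to 0$. Combining these gives $p_{n_1+n_3}(y'_{\ell-1},y'_\ell) = (1+o(1)) p_n(y'_{\ell-1},y'_\ell)$.

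\emph{Part (ii).} The starting observation is that the translated tori $\{\T_N + xN\}_{x \in \Z^d}$ partition $\Z^d$, so the Chapman--Kolmogorov equation gives
\[
  \sum_{x \in \Z^d} \sum_{u' \in \T_N + xN} p_{n_1}(y'_{\ell-1},u')\, p_{n_3}(u',y'_\ell)
  = \sum_{u' \in \Z^d} p_{n_1}(y'_{\ell-1},u')\, p_{n_3}(u',y'_\ell)
  = p_{n_1+n_3}(y'_{\ell-1},y'_\ell),
\]
so it suffices to show the part of this sum with $x \notin G$ is $o(1) p_{n_1+n_3}(y'_{\ell-1},y'_\ell)$. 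Every $u' \in \T_N + xN$ lies within $\tfrac{\sqrt{d}}{2}N$ of $xN$, and $\tfrac{\sqrt{d}}{2}N = o(A_n^2\sqrt{n})$ because $\delta > 2$; hence for $x \notin G$ either $|u' - y'_{\ell-1}| \ge \tfrac12 A_n^2 \sqrt{n}$ for every $u' \in \T_N + xN$, or $|u' - y'_\ell| \ge \tfrac12 A_n^2 \sqrt{n}$ for every such $u'$.

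In the first case I bound $p_{n_3}(u',y'_\ell) \le C n_3^{-d/2} \le C(\log\log n)^{d/2} n^{-d/2}$ using \eqref{e:gaussian_ub_lb} and $n_3 \ge \eps_n n$, and use disjointness of the tori to get
\[
  \sum_{x \text{ of first type}} \ \sum_{u' \in \T_N + xN} p_{n_1}(y'_{\ell-1},u')\, p_{n_3}(u',y'_\ell)
  \le \frac{C (\log\log n)^{d/2}}{n^{d/2}}\, \P_{y'_{\ell-1}}\!\left[\, |Y_{n_1} - y'_{\ell-1}| \ge \tfrac12 A_n^2 \sqrt{n}\, \right],
\]
and a routine dyadic summation of the Gaussian upper bound (with $n_1 \le n$) shows the last probability is $\le C\exp(-c A_n^4)$. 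The second type of $x$ is identical after using $p_{n_3}(u',y'_\ell) = p_{n_3}(y'_\ell,u')$. So the total $x \notin G$ contribution is $\le C(\log\log n)^{d/2} n^{-d/2} \exp(-c A_n^4)$, whereas the Gaussian lower bound \eqref{e:gaussian_ub_lb} (valid since $|y'_\ell - y'_{\ell-1}| \le A_n\sqrt{n} \le c(n_1+n_3)$) gives $p_{n_1+n_3}(y'_{\ell-1},y'_\ell) \ge c\, n^{-d/2} \exp(-C A_n^2)$. With $A_n = 10 \log\log n$ the ratio of these is at most $(\log\log n)^{d/2}\exp(-c A_n^4 + C A_n^2) \to 0$, which completes part (ii).

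\emph{Main obstacle.} There is nothing conceptually hard here; the delicate point is the bookkeeping of the several polylogarithmic factors --- the $\eps_n^{-d/2} = (10\log\log n)^{d/2}$ coming from $n_3 \ge \eps_n n$ and the $\exp(C A_n^2)$ coming from the Gaussian lower bound on $p_{n_1+n_3}$ --- against the stretched-exponential gain $\exp(-c A_n^4)$ harvested from the far-away tori. This is exactly why $A_n$ is chosen to tend to infinity (so $A_n^4$ dominates $A_n^2$) but only polylogarithmically. One should also check carefully the partition identity for the translated tori and the elementary comparison $\tfrac{\sqrt{d}}{2} N = o(A_n^2 \sqrt{n})$, which uses $\delta > 2$.
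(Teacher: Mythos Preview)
Your proof is correct and follows essentially the same route as the paper: part (i) via the LCLT comparison of $\bar p_n$ and $\bar p_{n_1+n_3}$, and part (ii) via Chapman--Kolmogorov plus a Gaussian-bound estimate of the $x\notin G$ contribution against the Gaussian lower bound on $p_{n_1+n_3}$. The only cosmetic difference is that in (ii) you bound one transition kernel by its sup $C n_3^{-d/2}$ and sum the other into a tail probability, whereas the paper keeps both Gaussian factors and sums directly in dyadic shells; both arguments yield the same $\exp(-cA_n^4)$ gain that beats the $\exp(CA_n^2)$ and $(\log\log n)^{d/2}$ losses.
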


\begin{proof}
(i) When $n_2 \le N^{2 \delta/d} \sim n^{2/d}$, we have
\[ n_1 + n_3
   = n \left( 1 - O \left( n^{-1+2/d} \right) \right). \]
Hence the exponential term in the LCLT for $p_{n_1+n_3}(y'_{\ell-1},y'_\ell)$ is
\[ \exp \left( - \frac{|y'_{\ell} - y'_{\ell-1}|^2}{n} \left( 1 + O ( n^{-1+2/d} ) \right) \right)
   = (1 + o(1)) \, \exp \left( - \frac{|y'_{\ell} - y'_{\ell-1}|^2}{n} \right), \]
where we used that $A_n = 10 \log \log n$, and hence  $|y'_{\ell} - y'_{\ell-1}|^2 \le A_n^2 n = n\, o( n^{1-2/d} )$. 

(ii) If we summed over all $x \in \mathbb{Z}^d$, we would get exactly $p_{n_1+n_3}(y'_{\ell-1}, y'_\ell)$.
Thus the claim amounts to showing that
\begin{equation}
\label{e:Gc-sum}
   \sum_{x \in \mathbb{Z}^d \setminus G} \sum_{u' \in \mathbb{T}_N+xN} 
      p_{n_1} (y'_{\ell-1}, u') \, p_{n_3}(u', y'_\ell)  
	 = o(1) p_{n_1+n_3} (y'_{\ell-1}, y'_\ell). 
\end{equation}
First, note that from the Local CLT we have
\[ p_{n_1+n_3} (y'_{\ell-1}, y'_\ell)
   = (1 + o(1)) \overline{p}_{n_1+n_3} (y'_{\ell-1}, y'_\ell). \]
In order to estimate the left-hand side of \eqref{e:Gc-sum}, using \eqref{e:gaussian_ub_lb}, 
the contribution of 
$\{ x \in \mathbb{Z}^d \setminus G : \max \{ |y'_{\ell-1} - xN|, |x N - y'_{\ell-1}| \} > A_n^2 \sqrt{n} \}$
can be estimated as follows. First, we have
\[ p_{n_1+n_3} (y'_{\ell-1}, y'_\ell)
   \ge \frac{c}{n^{d/2}} \exp ( - C A_n^2 (1+ o(1)) )
	 \ge \frac{c}{n^{d/2}} \exp ( - C (\log \log n)^2 ). \]
Here we used $|y'_{\ell} - y'_{\ell-1}|^2 \le A_n^2 n$ and $n_1+n_3 = n(1-o(1))$.

On the other hand, note that either $n_1 \ge n/3$ or $n_3 \ge n/3$.
Without loss of generality, assume that $n_3 \ge n/3$.
Then the contribution to the left-hand side of \eqref{e:Gc-sum}, 
using \eqref{e:gaussian_ub_lb}, and by summing in dyadic shells with radii
$2^k A_n^2 \sqrt{n}$, $k = 0, 1, 2, \dots$ we get the bound
\begin{equation}
\label{e:far-away-points}
\begin{split}
  &\sum_{k=0}^\infty C (A_n^2 \sqrt{n})^d 2^{dk} \, \frac{C}{n_1^{d/2}} \, \exp ( - c 2^{2k} A_n^4 n / n_1 ) \, 
	   \frac{1}{n_3^{d/2}} \, \exp ( - c 2^{2k} A_n^4 n / n_3 ) \\
	&\qquad \le \sum_{k=0}^\infty C \, A_n^{2d} \, 2^{dk} \, \frac{1}{\eps_n^{d/2}} \, \exp ( - c 2^{2k} A_n^4 ) 
	   \frac{1}{n^{d/2}} \, \exp ( - c 2^{2k} A_n^4 ) \\
	&\qquad \le \frac{C}{n^{d/2}} \, \frac{A_n^{2d}}{\eps_n^{d/2}} \, \sum_{k=0}^\infty 
	   \exp ( - c 2^{2k} (\log \log n)^4 + d k \log 2 ) \\
  &\qquad = \frac{C}{n^{d/2}} \, o \left( \exp ( - 100 (\log \log n)^2 ) \right). 
\end{split}	
\end{equation}

\end{proof}

The above lemma allows us to write 
\begin{equation}
\label{e:decomp-3}
\begin{split}
  \sum_{x \in G} I(x) 
	&= \frac{1 + o(1)}{N^d} \, p_n(y'_{\ell-1}, y'_\ell) \, 
	   \sum_{\substack{n_1 + n_2 + n_3 = n \\ n_1, n_3 \ge \eps_n n \\ n_2 \le N^{2 \delta/d}}} 
		 \sum_{z' \in \partial B_{0,x}} \mathbf{P}_{z'} [ H_{\mathbf{K}+xN} = n_2 < \xi_{B_{0,x}} ] \\
  &= \frac{(1 + o(1)) \, n}{N^d} \, p_n(y'_{\ell-1}, y'_\ell) \, 
	   \sum_{n_2 \le N^{2 \delta/d}} 
		 \sum_{z' \in \partial B_{0,x}} \mathbf{P}_{z'} [ H_{\mathbf{K}+xN} = n_2 < \xi_{B_{0,x}} ]. 
\end{split}
\end{equation}

The next lemma will help us extract the $\cpty(K)$ contribution from the 
right-hand side of \eqref{e:decomp}.

\begin{lemma}
We have
\begin{equation}
\label{e:capacity-2}
\begin{split}
  \sum_{n_2=0}^{N^{2 \delta/d}} \sum_{z' \in \partial B_{0,x}} 
	   \mathbf{P}_{z'} [ H_{\mathbf{K}+xN} = n_2 < \xi_{B_{0,x}} ] 
	 = \frac{1}{2} \cpty(K) \, (1 + o(1)). 
\end{split}
\end{equation} 
\end{lemma}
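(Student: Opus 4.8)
The plan is to show that the truncation of the sum at $n_2=\lfloor N^{2\delta/d}\rfloor$ costs only $o(1)$, and then to identify the untruncated sum through \eqref{e:capacity_ball}.

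First I would remove the spurious dependence on $x$ (the left-hand side of \eqref{e:capacity-2} does not in fact depend on $x$, $y'_{\ell-1}$ or $y'_\ell$) by translation: shifting by $-(\mathbf{x}_0+xN)$ maps $B_{0,x}$ to $B_r(o)$ with $r=N^\zeta$, and maps $\mathbf{K}+xN$ to a translate $K'$ of $K$ with $o\in K'$ and $\mathrm{diam}(K')\le N^\zeta$, so that $K'\subset B_r(o)$ for $N$ large. It therefore suffices to prove
\[
 \sum_{n_2=0}^{\lfloor N^{2\delta/d}\rfloor}\ \sum_{z'\in\partial B_r(o)}\mathbf{P}_{z'}\big[H_{K'}=n_2<\xi_{B_r(o)}\big]=\tfrac12\cpty(K)\,(1+o(1)).
\]
Dropping the truncation produces exactly $\sum_{z'\in\partial B_r(o)}\mathbf{P}_{z'}[H_{K'}<\xi_{B_r(o)}]$, because the events $\{H_{K'}=n_2<\xi_{B_r(o)}\}$, $n_2\ge 0$, are disjoint with union $\{H_{K'}<\xi_{B_r(o)}\}$. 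Since $r=N^\zeta\to\infty$ and $K'\subset B_r(o)$, \eqref{e:capacity_ball} applied to $K'$, together with translation invariance of the capacity, gives $\sum_{z'\in\partial B_r(o)}\mathbf{P}_{z'}[H_{K'}<\xi_{B_r(o)}]=\tfrac12\cpty(K)+o(1)$. This $o(1)$ is uniform in $K$ because $\mathcal{K}_R$ is a finite set, and uniform in $\mathbf{x}$ and $x$ because those dependencies were removed by translation; since $\cpty(K)$ is bounded below by a positive constant depending only on $d$, the error may be absorbed into a multiplicative factor $(1+o(1))$.

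It remains to bound the tail $\sum_{n_2>N^{2\delta/d}}\sum_{z'\in\partial B_r(o)}\mathbf{P}_{z'}[H_{K'}=n_2<\xi_{B_r(o)}]$. On the event $\{H_{K'}=n_2<\xi_{B_r(o)}\}$ with $n_2>N^{2\delta/d}$ one has $\xi_{B_r(o)}>N^{2\delta/d}$, so by disjointness the tail is at most $\sum_{z'\in\partial B_r(o)}\mathbf{P}_{z'}[\xi_{B_r(o)}>N^{2\delta/d}]$. A standard confinement estimate gives, for every $T\ge 1$,
\[
 \max_{w'\in B_r(o)}\mathbf{P}_{w'}\big[Y_t\in B_r(o)\ \text{for all }0\le t\le T\big]\le \exp\big(-c\lfloor T/r^2\rfloor\big),
\]
obtained by splitting $\{0,\dots,T\}$ into $\lfloor T/r^2\rfloor$ consecutive blocks of length $\lceil r^2\rceil$ and noting that, by the Gaussian lower bound \eqref{e:gaussian_ub_lb} applied to a single coordinate, from any point of $B_r(o)$ the walk leaves $B_r(o)$ during a given block with probability at least some $c>0$ uniformly in the starting point and in $N$, then iterating the Markov property. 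One application of the Markov property at time $1$ gives $\mathbf{P}_{z'}[\xi_{B_r(o)}>T]\le\max_{w'\in B_r(o)}\mathbf{P}_{w'}[Y_t\in B_r(o)\text{ for all }0\le t\le T-1]$, so with $T=N^{2\delta/d}$ and using $\zeta<\delta/d$ from \eqref{e:choice-zeta} (hence $T/r^2=N^{2(\delta/d-\zeta)}\to\infty$) each summand is at most $\exp(-cN^{2(\delta/d-\zeta)})$ for $N$ large. Since $|\partial B_r(o)|\le CN^{\zeta(d-1)}$, the tail is at most $CN^{\zeta(d-1)}\exp(-cN^{2(\delta/d-\zeta)})=o(1)$, which together with the preceding paragraph proves \eqref{e:capacity-2}.

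The only mildly delicate points are the uniformity of the $o(1)$ coming out of \eqref{e:capacity_ball}, handled by the finiteness of $\mathcal{K}_R$ and by translation invariance, and the uniform-in-starting-point confinement bound; neither is deep. The one genuine structural input is that the cut-off $N^{2\delta/d}$ is asymptotically far larger than $r^2=N^{2\zeta}$, which is precisely the role of the constraint $\zeta<\delta/d$ imposed in \eqref{e:choice-zeta}.
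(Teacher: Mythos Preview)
Your argument is correct and matches the paper's for the main term: both sum over $n_2$, drop the truncation, and invoke \eqref{e:capacity_ball} with $r=N^\zeta\to\infty$ to get $\tfrac12\cpty(K)+o(1)$.

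For the tail, however, you take a different route. The paper applies time-reversal: it rewrites
\[
\sum_{z'\in\partial B_{0,x}}\sum_{\mathbf{x}\in\mathbf{K}}\mathbf{P}_{z'}\big[N^{2\delta/d}<H_{\mathbf{K}+xN}<\xi_{B_{0,x}},\,Y_{H_{\mathbf{K}+xN}}=\mathbf{x}+xN\big]
=\sum_{\mathbf{x}\in\mathbf{K}}\mathbf{P}_{\mathbf{x}+xN}\big[N^{2\delta/d}<\xi_{B_{0,x}}<H_{\mathbf{K}+xN}\big],
\]
thereby trading the sum over $\sim N^{\zeta(d-1)}$ boundary points for a sum over only $|\mathbf{K}|$ points of the target set, and then bounds this by $|\mathbf{K}|\max_{\mathbf{x}}\mathbf{P}_{\mathbf{x}+xN}[\xi_{B_{0,x}}>N^{2\delta/d}]=o(1)$. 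You instead keep the boundary sum and bound each term by the same confinement estimate $\exp(-cN^{2(\delta/d-\zeta)})$, paying the polynomial factor $|\partial B_r(o)|\le CN^{\zeta(d-1)}$ which is harmless against the stretched-exponential decay. Your approach is more direct and avoids the reversal step; the paper's approach is cleaner in that it never incurs the boundary-size factor. Both rely on precisely the same structural input, namely $\zeta<\delta/d$ from \eqref{e:choice-zeta}.
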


\begin{proof}
Performing the sum over $n_2$ allows us to re-write the expression in the left-hand side of \eqref{e:capacity-2} as
\begin{equation*}
\begin{split}
  &\left( \sum_{z' \in \partial B_{0,x}}
      \mathbf{P}_{z'} [ H_{\mathbf{K}+xN} < \xi_{B_{0,x}} ] \right)
	 - \left( \sum_{z' \in \partial B_{0,x}}
      \mathbf{P}_{z'} [ N^{2 \delta/d} < H_{\mathbf{K}+xN} < \xi_{B_{0,x}} ] \right) \\
	 &\qquad = \frac{1}{2} \cpty(K) +o(1) - 
      \sum_{z' \in \partial B_{0,x}} \sum_{\mathbf{x} \in \mathbf{K}} 
	    \mathbf{P}_{z'} [ N^{2 \delta/d} < H_{\mathbf{K}+xN} < \xi_{B_{0,x}},\, Y_{H_{\mathbf{K}+xN}} = \mathbf{x} + xN ].
\end{split}
\end{equation*}
Here the $1/2$ before $\cpty(K)$ comes from the random walk being lazy; see \eqref{e:capacity_ball}. Using time-reversal for the summand in the last term we get the expression
\begin{equation}
\label{e:capacity-3}
\begin{split}
  &= \frac{1}{2} \cpty(K) +o(1) - \sum_{\mathbf{x} \in \mathbf{K}} \sum_{z' \in \partial B_{0,x}} 
	    \mathbf{P}_{\mathbf{x}+xN} [ N^{2 \delta/d} < \xi_{B_{0,x}} < H_{\mathbf{K}+xN},\, 
        Y_{\xi_{B_{0,x}}} = z'] \\
 &\qquad = \frac{1}{2} \cpty(K) +o(1) - \sum_{\mathbf{x} \in \mathbf{K}} \mathbf{P}_{\mathbf{x}+xN} [ N^{2 \delta/d} < \xi_{B_{0,x}} < H_{\mathbf{K}+xN} ]. 
\end{split}
\end{equation}

The subtracted term in the right-hand side of \eqref{e:capacity-3} is at most
\[ |\bfK| \max_{\mathbf{x} \in \mathbf{K}} \mathbf{P}_{\mathbf{x}+xN} [ \xi_{B_{0,x}} > N^{2 \delta/d} ]. \]
Since $\zeta < \delta/d$, this expression is $o(1)$.
\end{proof}

From the above lemma we get that the main contribution equals
\begin{equation}
\label{e:main-contrib}
  \sum_{x \in G} I(x) 
	= (1 + o(1)) \frac{n}{N^d} \, \frac{1}{2} \, \cpty(K) \, p_n(y'_{\ell-1}, y'_\ell). 
\end{equation}

It is left to estimate all the error terms.

\subsubsection{The error terms}

\begin{lemma}
We have
\[ \sum_{x \in G} II(x) 
   = o(1) \frac{n}{N^d} p_{n}(y'_{\ell-1}, y'_\ell). \]
\end{lemma}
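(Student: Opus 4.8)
The plan is to split $II(x)$ according to which of the three defining restrictions in \eqref{e:decomp-mod} is violated. Write
\[
  II(x) \le \Phi_{\mathrm I}(x) + \Phi_{\mathrm{II}}(x) + \Phi_{\mathrm{III}}(x),
\]
where $\Phi_{\mathrm I}(x)$ collects the terms $F(n_1,n_2,n_3,x,y'_{\ell-1},y'_\ell)$ with $n_2 > N^{2\delta/d}$, while $\Phi_{\mathrm{II}}(x)$ and $\Phi_{\mathrm{III}}(x)$ collect the terms with $n_2 \le N^{2\delta/d}$ and, respectively, $n_1 < \eps_n n$ or $n_3 < \eps_n n$ (these two latter sums may overlap, which is harmless for an upper bound). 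The terms $\Phi_{\mathrm{II}}$ and $\Phi_{\mathrm{III}}$ are handled in the same way, as explained below, so it suffices to treat $\Phi_{\mathrm I}$ and $\Phi_{\mathrm{II}}$.

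For $\Phi_{\mathrm I}$ I would use only crude bounds. In \eqref{e:F-def} bound $p_{n_3}(x',y'_\ell) \le 1$, carry out the sum over $x' \in \bfK + xN$ (which produces $\mathbf{P}_{z'}[H_{\bfK+xN} = n_2 < \xi_{B_{0,x}}]$), and use $\sum_{z' \in \partial B_{0,x}} \widetilde{p}^{(x)}_{n_1}(y'_{\ell-1},z') \le 1$, to get $F(n_1,n_2,n_3,x,\cdot) \le \max_{z' \in \partial B_{0,x}} \mathbf{P}_{z'}[\xi_{B_{0,x}} \ge n_2]$. Since $B_{0,x}$ has radius $N^\zeta$ and $\zeta < \delta/d$, a standard survival-in-a-ball estimate (the walk's displacement by time $n_2 > N^{2\delta/d}$ is typically far larger than $N^\zeta$; equivalently iterate over blocks of length of order $N^{2\zeta}$, or use the principal Dirichlet eigenvalue of the ball, which is of order $N^{-2\zeta}$) gives $\mathbf{P}_{z'}[\xi_{B_{0,x}} \ge n_2] \le C \exp(-c\, n_2 / N^{2\zeta})$ uniformly in $z'$ and $x$. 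Summing over $n_2 > N^{2\delta/d}$, over the at most $n$ pairs $(n_1,n_3)$ with $n_1+n_3 = n-n_2$, and over the at most $N^{O(1)}$ values of $x \in G$ (note $|G| \le C(A_n^2\sqrt n/N+1)^d$), yields $\sum_{x \in G} \Phi_{\mathrm I}(x) \le N^{O(1)} \exp(-c\, N^{2(\delta/d-\zeta)})$. Since $\frac{n}{N^d} p_n(y'_{\ell-1},y'_\ell) \ge N^{-O(1)}$ by the Gaussian lower bound in \eqref{e:gaussian_ub_lb} together with $|y'_\ell - y'_{\ell-1}| \le A_n\sqrt n$, and $\zeta < \delta/d$, this is $o(1)\frac{n}{N^d} p_n(y'_{\ell-1},y'_\ell)$.

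For $\Phi_{\mathrm{II}}$ the point is that $n_1 < \eps_n n$ together with $n_2 \le N^{2\delta/d} \sim n^{2/d}$ forces $m := n_1 + n_2 < 2\eps_n n$ for $N$ large. The essential step — which turns a useless $O(\frac{n}{N^d} p_n)$ estimate into the required $o(\frac{n}{N^d} p_n)$ — is to group the sum over $(n_1,n_2)$ by $m$ and collapse it via the exact last-exit-from-$\partial B_{0,x}$ decomposition: since $y'_{\ell-1} \notin B_{0,x}$ (because $|y'_{\ell-1} - (\bfx_0 + xN)| \ge N^\zeta$), for each fixed $m$ and each $x' \in \bfK + xN$,
\[
  \sum_{n_1=0}^{m} \sum_{z' \in \partial B_{0,x}} \widetilde{p}^{(x)}_{n_1}(y'_{\ell-1},z')\, \mathbf{P}_{z'}[H_{\bfK+xN} = m - n_1 < \xi_{B_{0,x}},\ Y_{H_{\bfK+xN}} = x'] = \mathbf{P}_{y'_{\ell-1}}[H_{\bfK+xN} = m,\ Y_m = x'] \le p_m(y'_{\ell-1},x'),
\]
and the part of the left-hand side corresponding to $\Phi_{\mathrm{II}}(x)$ at a fixed $m$ is a subsum of this, so no spurious multiplicity of order $N^{2\delta/d}$ is incurred. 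Hence $\Phi_{\mathrm{II}}(x) \le \sum_{m=0}^{\lceil 2\eps_n n\rceil} \sum_{x' \in \bfK + xN} p_m(y'_{\ell-1},x')\, p_{n-m}(x',y'_\ell)$. Summing over $x \in G$, enlarging the range of $x$ to all of $\Z^d$, and imposing the restriction $|y'_{\ell-1}-x'|, |x'-y'_\ell| > N^\zeta$ (valid for every copy of $\bfK$ by the definition of $\mathcal{G}_{\zeta,C_1}$), I am left with precisely the type of two-leg sum estimated in Section \ref{ssec:weaker_capacity_ub}: splitting according to whether $m < N^{2+6\eps}$ or not, and in the latter case according to whether both $|y'_{\ell-1}-x'| \le m^{1/2+\eps}$ and $|x'-y'_\ell| \le (n-m)^{1/2+\eps}$ hold, the resulting pieces are controlled by Corollary \ref{cor:Case3-extract}, Corollary \ref{cor:Case2-extract}, and Corollary \ref{cor:Case1-extract} (the last with the sequence $2\eps_n \to 0$ in place of $\eps_n$), each being $o(1)\frac{n}{N^d} p_n(y'_{\ell-1},y'_\ell)$. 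The term $\Phi_{\mathrm{III}}$ is handled identically, now collapsing $n_1+n_2$ with $m := n_1+n_2 > (1-\eps_n)n$ (so the short leg is $n_3 = n-m < \eps_n n$) and reducing to a two-leg sum with a short \emph{second} leg, bounded by the second-leg versions of the same corollaries (e.g.\ Corollary \ref{cor:Case1-extract}(ii), and the analogue of Corollary \ref{cor:Case3-extract} for small $n_2$, cf.\ Cases 4a--4b in the proof of Proposition \ref{prop:weaker_capacity_ub}).

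Combining the three bounds yields $\sum_{x \in G} II(x) = o(1)\frac{n}{N^d} p_n(y'_{\ell-1},y'_\ell)$, as required. The step I expect to be the main obstacle is the treatment of $\Phi_{\mathrm{II}}$ (and its mirror $\Phi_{\mathrm{III}}$): one must perform the grouping by $m$ and apply the last-exit decomposition precisely so as not to lose a factor of order $N^{2\delta/d}$ — the number of ways to write a fixed $m$ as $n_1+n_2$ with $n_2 \le N^{2\delta/d}$ — which would ruin the bound in low dimensions. By contrast $\Phi_{\mathrm I}$ can afford extremely wasteful estimates, since the constraint $n_2 > N^{2\delta/d}$ already supplies a super-polynomially small factor.
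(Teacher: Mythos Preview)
Your proposal is correct and follows essentially the same route as the paper. The three-way split $\Phi_{\mathrm I},\Phi_{\mathrm{II}},\Phi_{\mathrm{III}}$ matches the paper's Cases~1--3, and for $\Phi_{\mathrm{II}}$ (and its mirror $\Phi_{\mathrm{III}}$) your collapse by $m=n_1+n_2$ via the last-exit identity is precisely the mechanism the paper uses (stated there more tersely as ``put $n'_1=n_1+n_2$, $n'_2=n_3$''), after which both arguments invoke Corollaries~\ref{cor:Case1-extract}--\ref{cor:Case3-extract}.

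The only genuine methodological difference is in $\Phi_{\mathrm I}$: you discard $p_{n_3}$ and the sum over~$z'$, keeping only the survival bound $\P_{z'}[\xi_{B_{0,x}}>n_2]\le C\exp(-c\,n_2/N^{2\zeta})$ and a polynomial count of terms, whereas the paper instead keeps the three-leg structure by proving $\P_{z'}[H_{\bfK+xN}=n_2<\xi_{B_{0,x}},Y_{H}=x']\le C\exp(-N^{\eps/2})\,p_{n_2}(z',x')$ and then applying Chapman--Kolmogorov to recover $p_n(y'_{\ell-1},y'_\ell)$ directly. Your route is cruder but entirely sufficient, since the exponential factor $\exp(-c\,N^{2(\delta/d-\zeta)})$ absorbs every polynomial loss (from $|G|$, from the $O(n)$ pairs $(n_1,n_3)$, and from the polynomial lower bound on $\frac{n}{N^d}p_n$).
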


\begin{proof}
We split the estimates according to which condition is violated in the sum. Recall that in the proof of Proposition \ref{prop:weaker_capacity_ub} we chose $\eps > 0$ such that $-\eps+6\eps^2 <0$. Here we make the further restriction that $\eps < 2\delta/d - 2\zeta$.

\medbreak

\emph{Case 1. $n_2 > N^{2\delta/d}$.}
We claim that
\begin{equation}
\label{e:n2-claim}
 \P_{z'} [ H_{\bfK+xN} = n_2 < \xi_{B_{0,x}}, Y_{H_{\bfK+xN}} = x' ]
 \le C \exp ( - N^{\eps/2} ) p_{n_2}(z',x'). 
\end{equation}
Since in every time interval of duration $N^{2\zeta}$, the walk has a positive chance to exit the ball $B_{0,x}$, we have
\begin{equation*}
\begin{split}
 \P_{z'} [ H_{\bfK+xN} = n_2 < \xi_{B_{0,x}}, Y_{H_{\bfK+xN}} = x' ]
 &\le \P_{z'}[ \xi_{B_{0,x}} > N^{2\delta /d}]
 \le C \exp(-c \frac{N^{2\delta /d}}{N^{2\zeta}} ) \\
 &\le C \exp ( - N^{\eps} ).
\end{split}
\end{equation*}
By \eqref{e:gaussian_ub_lb} on $p_{n_2}$ and since $\zeta < \delta/d$ and $ N^{2\delta /d} < n_2 < n$ we have
\begin{equation*}
p_{n_2}(z',x')
\ge \frac{c}{n_2^{d/2}}\exp\left(-C\frac{N^{2\zeta}}{n_2}\right)
\ge c\exp ( - N^{\eps/2}).
\end{equation*}
Here we lower bounded 
$\exp\left(-C\frac{N^{2\zeta}}{n_2}\right)$ by $c$. The claim \eqref{e:n2-claim} is proved.

We also have the bound
\begin{equation*}
 \widetilde{p}_{n_1}^{(x)}(y'_{\ell-1},z')
 \le p_{n_1}(y'_{\ell-1},z').
\end{equation*}
We then get (summing over $z'$ and $x'$) that the contribution to $\sum_{x \in \Z^d} II(x)$ 
from Case 1 is at most
\begin{equation*}
\begin{split}
 &\sum_{n_1 + n_2 + n_3 = n} \sum_{z' \in \Z^d} \sum_{x' \in \Z^d}
   p_{n_1}(y'_{\ell-1},z') \, C \exp ( - N^{\eps/2} ) p_{n_2}(z',x') \, p_{n_3} (x',y'_{\ell}) \\
 &\qquad \le C \exp ( - N^{\eps/2} ) \sum_{n_1 + n_2 + n_3 = n} p_{n}(y'_{\ell-1}, y'_\ell) \\
 &\qquad \le C n^2 \exp ( - N^{\eps/2} ) p_{n}(y'_{\ell-1}, y'_\ell) \\
 &\qquad = o(1) \frac{n}{N^d} p_{n}(y'_{\ell-1}, y'_\ell).
\end{split}
\end{equation*}

\medbreak

\emph{Case 2. $n_2 \le N^{2\delta/d}$ and $n_1 < \eps_n n$.} Note that since $n_2 \le N^2 \le \eps_n n$
for large enough $N$, if we put $n'_1 = n_1 + n_2$ and $n'_2 = n_3$, we can upper bound the contribution of this case by 
\begin{equation*}
 \sum_{\substack{n'_1 + n'_2 = n \\ n'_1 \le 2 \eps_n n}} \sum_{x \in \Z^d} \sum_{x' \in \bfK+xN} 
   p_{n'_1}(y'_{\ell-1}, x') p_{n'_2}(x', y'_\ell). 
\end{equation*}
Now we can make use of the corollaries stated after the proof of Proposition \ref{prop:weaker_capacity_ub} as follows.

\emph{Case 2--(i). $N^{2+6 \eps} \le n'_1 \le 2 \eps_n n$ and $|y'_{\ell-1} - x'| \le (n'_1)^{\frac{1}{2}+\eps}$ 
and $|x' - y'_\ell| \le (n'_2)^{\frac{1}{2}+\eps}$.} 
Note that for large enough $N$ we have $n'_2 \ge (n - 2 \eps_n n) \ge N^{2+6 \eps}$.
Hence due to Corollary \ref{cor:Case1-extract}(i) (with $\eps_n$ there
replaced by $2 \eps_n$) the contribution of this case is 
\begin{equation*}
\begin{split}
  o(1) \frac{n}{N^d} p_{n}(y'_{\ell-1}, y'_\ell).
\end{split}
\end{equation*}

\emph{Case 2--(ii). $N^{2+6 \eps} \le n'_1 \le 2 \eps_n n$ but either 
$|y'_{\ell-1} - x'| > (n'_1)^{\frac{1}{2}+\eps}$ or $|x' - y'_\ell| > (n'_2)^{\frac{1}{2}+\eps}$.}
Again, we have $n'_2 \ge N^{2+6 \eps}$. Hence, neglecting the requirement $n'_1 \le 2 \eps_n n$,
Corollary \ref{cor:Case2-extract} immediately implies that the contribution of this case is
\begin{equation*}
\begin{split}
  o(1) \frac{n}{N^d} p_{n}(y'_{\ell-1}, y'_\ell).
\end{split}
\end{equation*}

\emph{Case 2--(iii). $n'_1 < N^{2+6\eps}$.} It follows immediately from Corollary \ref{cor:Case3-extract}
that the contribution of this case is
\begin{equation*}
\begin{split}
  o(1) \frac{n}{N^d} p_{n}(y'_{\ell-1}, y'_\ell).
\end{split}
\end{equation*}

\medbreak

\emph{Case 3. $n_2 \le N^{2\delta/d}$ and $n_3 < \eps_n n$.} Due to symmetry, this case can be 
handled very similarly to Case 2.
\end{proof}

\begin{lemma}
We have
\[ \sum_{x \in \mathbb{Z}^d \setminus G} \mathbf{P} [ A(x) ]
   = o(1) \frac{n}{N^d} p_n(y'_{\ell-1}, y'_\ell). \]
\end{lemma}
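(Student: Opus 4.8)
\emph{Setup and reduction.} Throughout we are in the regime of this subsection, $|y'_\ell - y'_{\ell-1}| \le A_n\sqrt{n}$ with $A_n = 10\log\log n$, and we recall $\mathrm{diam}(\bfK) \le N^\zeta$ with $\zeta < \delta/d < \delta/2$, so that $N^\zeta = o(\sqrt{n})$ and, since $\delta > 2$, also $N = o(\sqrt{n})$. Moreover $|\bfK|$ is bounded by a constant depending only on $R$, and by the Gaussian lower bound \eqref{e:gaussian_ub_lb} (applicable since $|y'_\ell - y'_{\ell-1}| \le A_n\sqrt{n} \le cn$),
\[ \frac{n}{N^d}\, p_n(y'_{\ell-1},y'_\ell) \ge \frac{c}{N^d\, n^{(d-2)/2}}\, e^{-C A_n^2}. \]
So it is enough to bound $\sum_{x \in \Z^d\setminus G} \mathbf{P}_{y'_{\ell-1}}[A(x)]$ (we read $\mathbf{P}[A(x)]$ in the statement as $\mathbf{P}_{y'_{\ell-1}}[A(x)]$, as in the rest of the subsection) by a quantity that is $o\bigl( N^{-d}\, n^{-(d-2)/2}\, e^{-CA_n^2} \bigr)$ as $N\to\infty$.

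\emph{Geometry and the single-copy bound.} First note that if $|y'_{\ell-1} - xN| \le A_n^2\sqrt{n} - A_n\sqrt{n}$ then $|xN - y'_\ell| \le A_n^2\sqrt{n}$, and hence $x \in G$; therefore $x \notin G$ forces, for large $N$, both $|y'_{\ell-1} - xN| > \tfrac{1}{2} A_n^2\sqrt{n}$ and $|xN - y'_\ell| > \tfrac{1}{2} A_n^2\sqrt{n}$, and since $\mathrm{diam}(\bfK) = o(A_n^2\sqrt{n})$ this gives $|y'_{\ell-1} - x'| \ge \tfrac{1}{4} A_n^2\sqrt{n}$ and $|x' - y'_\ell| \ge \tfrac{1}{4} A_n^2\sqrt{n}$ for every $x' \in \bfK + xN$. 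Next, decomposing $A(x)$ at the first time $\bfK + xN$ is hit (legitimate since $y'_{\ell-1} \notin \bfK + xN$ by the above) and using a union bound over that time,
\[ \mathbf{P}_{y'_{\ell-1}}[A(x)] \le \sum_{m=1}^{n} \sum_{x' \in \bfK+xN} p_m(y'_{\ell-1},x')\, p_{n-m}(x',y'_\ell). \]
I would split the $m$-sum at $n/2$: for $m \le n/2$ bound $p_{n-m}(x',y'_\ell) \le \sup_{n/2 \le s \le n} p_s(x',y'_\ell) \le \tfrac{C}{n^{d/2}} e^{-c|x'-y'_\ell|^2/n}$ using \eqref{e:gaussian_ub_lb}, while $\sum_{m \le n/2} p_m(y'_{\ell-1},x') \le G(y'_{\ell-1},x') \le C_G |y'_{\ell-1}-x'|^{-(d-2)} \le C (A_n^2\sqrt{n})^{-(d-2)}$; the range $m > n/2$ is symmetric with the roles of $y'_{\ell-1}$ and $y'_\ell$ exchanged. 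This yields
\[ \mathbf{P}_{y'_{\ell-1}}[A(x)] \le \frac{C}{(A_n^2\sqrt{n})^{d-2}}\, \frac{C}{n^{d/2}} \sum_{x' \in \bfK+xN} \left( e^{-c|x'-y'_\ell|^2/n} + e^{-c|y'_{\ell-1}-x'|^2/n} \right). \]

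\emph{Summation over $x$ and conclusion.} Since $\mathrm{diam}(\bfK) = o(\sqrt{n})$, for $x \notin G$ we have $\sum_{x'\in\bfK+xN} e^{-c|x'-y'_\ell|^2/n} \le |\bfK|\, e^{-c|xN-y'_\ell|^2/n}$, and the lattice $\{xN : x \in \Z^d\}$ is $N$-separated with $N = o(\sqrt{n})$, so the number of its points with $|xN - y'_\ell| \in [k\sqrt{n},(k+1)\sqrt{n})$ is $O(k^{d-1} n^{d/2}/N^d)$. Summing over shells at radii at least $\tfrac{1}{2} A_n^2\sqrt{n}$,
\[ \sum_{x \notin G} \sum_{x'\in\bfK+xN} e^{-c|x'-y'_\ell|^2/n} \le |\bfK| \sum_{k \ge A_n^2/4} \frac{C k^{d-1} n^{d/2}}{N^d}\, e^{-ck^2} \le \frac{C |\bfK|\, n^{d/2}}{N^d}\, A_n^{2(d-1)}\, e^{-c A_n^4}, \]
and the same bound holds with $y'_{\ell-1}$ in place of $y'_\ell$. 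Combining with the single-copy bound,
\[ \sum_{x \in \Z^d\setminus G} \mathbf{P}_{y'_{\ell-1}}[A(x)] \le \frac{C|\bfK|}{N^d\, n^{(d-2)/2}}\, A_n^{2}\, e^{-c A_n^4}, \]
so, dividing by the lower bound on $\tfrac{n}{N^d} p_n(y'_{\ell-1},y'_\ell)$ recalled above, the ratio is at most $C|\bfK|\, A_n^{2}\, e^{-c A_n^4 + C A_n^2} \to 0$ as $N\to\infty$, because $A_n = 10\log\log n \to \infty$. This is the claimed estimate.

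\emph{Main obstacle.} The delicate point is producing the $N^{-d}$ factor: one must exploit the $N$-separation (sparsity) of $\varphi^{-1}(\bfK)$, so that the number of copies of $\bfK$ within distance $r$ of a point is $O((r/N)^d)$ rather than $O(r^d)$ --- this is where the torus structure is genuinely used. One then has to check that the residual Gaussian tail $e^{-cA_n^4}$, which decays only like $e^{-c(\log\log n)^4}$ (slower than any power of $n$), is nonetheless small enough to defeat the $e^{CA_n^2}$ loss coming from the lower bound on $p_n(y'_{\ell-1},y'_\ell)$. Both requirements are met because $A_n = 10\log\log n \to \infty$, and any $A_n \to \infty$ would in fact do.
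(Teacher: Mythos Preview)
Your proof is correct and follows the same overall strategy as the paper (first-hit decomposition, split at $m=n/2$, Gaussian lower bound on $p_n(y'_{\ell-1},y'_\ell)$, and summation over $x\notin G$ via spherical shells using the $N$-separation of $\varphi^{-1}(\bfK)$). There is, however, a genuine methodological difference in how the single-copy bound is obtained.

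The paper applies the Gaussian upper bound \eqref{e:gaussian_ub_lb} to \emph{both} factors $p_m$ and $p_{n-m}$, and then has to control a sum of the form $\sum_{m \le n/2} m^{-d/2} \exp(-cr^2/m)$ via a further dyadic decomposition in $m$. You instead bound $\sum_{m \le n/2} p_m(y'_{\ell-1},x')$ by the full Green's function $G(y'_{\ell-1},x') \le C_G|y'_{\ell-1}-x'|^{-(d-2)}$ and keep only one Gaussian factor. This is cleaner: it avoids the inner dyadic sum entirely and makes transparent where the polynomial loss $(A_n^2\sqrt{n})^{-(d-2)}$ comes from. The price you pay is that your per-copy bound carries only one exponential factor $e^{-c|x'-y'_\ell|^2/n}$ rather than two, but as you correctly note this is harmless since $A_n^4$ in the exponent already dominates the $A_n^2$ loss from the lower bound on $p_n$. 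The paper's dyadic-in-$|x'-y'_{\ell-1}|$ summation and your linear shells $[k\sqrt{n},(k+1)\sqrt{n})$ are equivalent here.

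One small inaccuracy worth flagging: the inequality $\sum_{x'\in\bfK+xN} e^{-c|x'-y'_\ell|^2/n} \le |\bfK|\, e^{-c|xN-y'_\ell|^2/n}$ does not hold with the \emph{same} constant $c$, since $|x'-y'_\ell|$ can be smaller than $|xN-y'_\ell|$ by up to $C N$. But because $N = o(\sqrt{n})$ and $|xN-y'_\ell| \ge \tfrac12 A_n^2\sqrt{n}$, one has $|x'-y'_\ell|^2 \ge (1-o(1))|xN-y'_\ell|^2$, so the inequality holds with $c$ replaced by any $c' < c$, which suffices. Alternatively, carry out the shell decomposition directly in terms of $|x'-y'_\ell|$ rather than $|xN-y'_\ell|$; the lattice-point count is unchanged.
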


\begin{proof}
By the same arguments as in Lemma \ref{lem:conv-and-sum}(ii), we have
\[ p_n(y'_{\ell-1}, y'_\ell)
   \ge \frac{C}{n^{d/2}} \exp \left( - 100 (\log \log n)^2 \right). \]
For $x \in \mathbb{Z}^d \setminus G$, let $k$ be the dyadic scale 
that satisfies
\[ 2^k A_n^2 \sqrt{n}
   \le |x' - y'_{\ell-1}| 
	 < 2^{k+1} A_n^2 \sqrt{n}. \]
The same bounds hold up to constants for $|x' - y'_{\ell}|$.

Then we have
\begin{equation*}
\begin{split}
  \mathbf{P} [ A(x) ]
	&\le \sum_{1 \le m \le n-1} \sum_{x' \in \mathbf{K}+xN} p_m(y'_{\ell-1}, x') p_{n-m}(x',y'_\ell) \\
  &\le C |\bfK| \sum_{1 \le m \le n-1} \frac{1}{m^{d/2}} \frac{1}{(n-m)^{d/2}} 
	    \exp \left( -c \frac{2^{2k} A_n^4 n}{m} \right)
			\exp \left( -c \frac{2^{2k} A_n^4 n}{n-m} \right). 
\end{split}
\end{equation*}
Due to symmetry of the right-hand side, it is enough to consider the contribution 
of $1 \le m \le n/2$, which is bounded by 
\begin{equation*}
\begin{split}
  &\frac{C}{n^{d/2}} \exp \left( -c 2^{2k} A_n^4 \right) 
	  \sum_{1 \le m \le n/2} \frac{1}{m^{d/2}} \exp \left( -c \frac{2^{2k} A_n^4 n}{m} \right) \\
  &\qquad \le \frac{C}{n^{d/2}} \exp \left( -c 2^{2k} A_n^4 \right) 
	  \sum_{k' = 1}^{\lfloor \log_2 n \rfloor} \sum_{m : 2^{k'} \le n/m < 2^{k'+1}}
	  \frac{2^{k' d/2}}{n^{d/2}} \exp \left( -c 2^{2k} A_n^4 2^{k'} \right) \\
	&\qquad \le \frac{C}{n^{d}} \exp \left( -c 2^{2k} A_n^4 \right) 
	  \sum_{k' = 1}^{\infty} \frac{n}{2^{k'}}
	  \exp \left( -c 2^{2k} A_n^4 2^{k'} + k' d/2 \log 2 \right) \\
	&\qquad \le \frac{C n}{n^{d}} \exp \left( -c 2^{2k} A_n^4 \right). 
\end{split}
\end{equation*}

Now summing over $x \in \mathbb{Z}^d \setminus G$ we have that
the number of the copies of the torus at dyadic scale $2^k A_n^2 \sqrt{n}$ is at most $C \frac{1}{N^d} \left( 2^k A_n^2 \sqrt{n} \right)^d$. Hence
\begin{equation*}
\begin{split}
  \sum_{x \in \mathbb{Z}^d \setminus G} \mathbf{P} [ A(x) ]
	&\le \frac{C n}{n^{d}} \sum_{k=0}^\infty \frac{1}{N^d} \left( 2^k A_n^2 \sqrt{n} \right)^d
	    \exp \left( -c 2^{2k} A_n^4 \right) \\
	&\le \frac{C}{n^{d/2}} \frac{n}{N^d} \sum_{k=0}^\infty 
	    \exp \left( - c 2^{2k} A_n^4 + k d \log 2 + 2d \log A_n \right) \\
	&= o(1) \frac{1}{n^{d/2}} \frac{n}{N^d} \exp \left( - 100 (\log \log n)^2\right).
\end{split}
\end{equation*}
\end{proof}

\begin{lemma}
We have
\[ \sum_{x_1 \not= x_2 \in \mathbb{Z}^d} \mathbf{P} [ A(x_1) \cap A(x_2) ]
   = o(1) \frac{n}{N^d} p_n(y'_{\ell-1}, y'_\ell). \]
\end{lemma}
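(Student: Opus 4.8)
\emph{Set-up and geometry.}
The event $A(x_1)\cap A(x_2)$ with $x_1\neq x_2$ forces $(Y_t)_{0\le t\le n}$ to be a bridge from $y'_{\ell-1}$ to $y'_\ell$ that visits the two \emph{distinct} copies $x_1N+\bfK$ and $x_2N+\bfK$. Decomposing on the first visit to $(x_1N+\bfK)\cup(x_2N+\bfK)$, and then on the first subsequent visit to whichever of the two copies has not yet been visited, bounding both first‑passage weights crudely by transition probabilities, and accounting for the two possible orders by a factor $2$, we get
\[
  \sum_{x_1\neq x_2}\P_{y'_{\ell-1}}[A(x_1)\cap A(x_2)]
  \;\le\; 2\sum_{x_1\neq x_2}\ \sum_{a+b+c=n}\ \sum_{\substack{w'\in x_1N+\bfK\\ w''\in x_2N+\bfK}}
     p_a(y'_{\ell-1},w')\,p_b(w',w'')\,p_c(w'',y'_\ell).
\]
Two facts are used repeatedly: (a) distinct copies of $\bfK$ are at Euclidean distance $\ge N/2$ from one another (since $\mathrm{diam}(\bfK)\le N^\zeta$), and (b) $y'_{\ell-1}$ and $y'_\ell$ are at distance $\ge N^\zeta/2$ from $\varphi^{-1}(\bfK)$, which follows from $\bfy_{\ell-1},\bfy_\ell\notin B(\bfx_0,N^\zeta)$ in the definition of $\mathcal{G}_{\zeta,C_1}$ together with $\mathrm{diam}(\bfK)\le 2R$. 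We first restrict the triple sum to $|w'-y'_{\ell-1}|\le C_1\sqrt{n\log n}$ and $|w''-y'_\ell|\le C_1\sqrt{n\log n}$: on the complement the Gaussian bounds \eqref{e:gaussian_ub_lb} make the contribution of order $n^{O(1)}\exp(-cC_1^2\log n)$ relative to $\tfrac{n}{N^d}p_n(y'_{\ell-1},y'_\ell)$, hence $o(1)\tfrac n{N^d}p_n(y'_{\ell-1},y'_\ell)$ after enlarging $C_1$ — this is the same kind of estimate as in Cases 2a and 3b of the proof of Proposition \ref{prop:weaker_capacity_ub} and in the previous two lemmas, using $p_n(y'_{\ell-1},y'_\ell)\ge n^{-d/2+o(1)}$ from \eqref{e:p_n-lbd}. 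On the retained part we moreover have $|w'-w''|\ge N/2$ and, since $|y'_\ell-y'_{\ell-1}|\le A_n\sqrt n$, also $|w'-y'_\ell|,|w''-y'_{\ell-1}|\le 2C_1\sqrt{n\log n}$.

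\emph{Iterating the single‑stretch bound.}
Split the retained sum into $\{a\le c\}$ and $\{c<a\}$; by the symmetry of the summand under exchanging the two endpoints (using that $p_t$ is symmetric and relabelling $x_1\leftrightarrow x_2$), together with the symmetry of the statement in $y'_{\ell-1}$ and $y'_\ell$, it suffices to treat $\{a\le c\}$, where $a\le n/2$ and hence $b+c=n-a\ge n/2$. For fixed $x_1$, $w'\in x_1N+\bfK$ and $a$, the inner sum over $x_2\neq x_1$, over $w''\in x_2N+\bfK$, and over $b+c=n-a$ is precisely of the form controlled by Corollary \ref{cor:x-summation}, applied with $y'=w'$, $y''=y'_\ell$ and $m=n-a$: the excluded region $\{|y'-x'|\le N^\zeta\}$ contains the copy $x_1N+\bfK$, which is already discarded by $x_2\neq x_1$ (by (a)), and the only copy that $\{|y''-x'|\le N^\zeta\}$ can clip is the copy of $\bfK$ nearest $y'_\ell$, which by (b) still lies at distance $\ge N^\zeta/2$ from $y'_\ell$ — so the bound remains valid, since its proof (Cases 3a, 4a of Proposition \ref{prop:weaker_capacity_ub}, via Lemma \ref{lem:Green-bnd}) uses that separation only up to a bounded factor. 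Thus
\[
  \sum_{x_2\neq x_1}\ \sum_{b+c=n-a}\ \sum_{w''\in x_2N+\bfK}
     p_b(w',w'')\,p_c(w'',y'_\ell)\;\le\;\frac{Cn}{N^d}\,p_{n-a}(w',y'_\ell).
\]
Substituting this, then dropping the truncation and the constraint $a\le n/2$ for an upper bound, the remaining sum over $x_1$, $w'\in x_1N+\bfK$ and $a$ is $\sum_{n_1+n_2=n}\sum_{x'\in\varphi^{-1}(\bfK)}p_{n_1}(y'_{\ell-1},x')p_{n_2}(x',y'_\ell)$, which by (b) and the proof of Proposition \ref{prop:weaker_capacity_ub} (equivalently, Corollary \ref{cor:x-summation} with $m=n$) equals $O(n/N^d)\,p_n(y'_{\ell-1},y'_\ell)$. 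Hence the $\{a\le c\}$ part is $O\big((n/N^d)^2\big)p_n(y'_{\ell-1},y'_\ell)$, and the $\{c<a\}$ part is identical with the two ends exchanged. Since $n/N^d=N^{\delta-d}\to 0$, the total is $o(1)\,\tfrac n{N^d}\,p_n(y'_{\ell-1},y'_\ell)$, which is the claim.

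\emph{Main obstacle.}
The delicate point is the middle step: extracting \emph{two} independent factors $n/N^d$ (one per visited copy) by iterating the single‑stretch estimate, where the inner application of Corollary \ref{cor:x-summation} is ``anchored'' at a point $w'$ that lies \emph{inside} $\varphi^{-1}(\bfK)$. This is legitimate only because distinct copies of $\bfK$ are $\Theta(N)$ apart, so $w'$ is at distance $\ge N/2$ from every copy actually summed over, and because the proof of Proposition \ref{prop:weaker_capacity_ub} is insensitive to shrinking the endpoint‑to‑$\bfK$ separation from $N^\zeta$ to a fixed fraction of $N^\zeta$; the remaining ingredients — the crude first‑passage decomposition, the Gaussian truncation, and the exchange symmetry — are routine.
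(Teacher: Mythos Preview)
Your proof is correct and follows essentially the same approach as the paper: bound $\P[A(x_1)\cap A(x_2)]$ by a three-leg decomposition, use symmetry to reduce to one half, and then iterate the single-stretch estimate (Corollary~\ref{cor:x-summation}) twice to extract a factor $(n/N^d)^2$. The organizational details differ slightly --- you truncate $|w'-y'_{\ell-1}|$ and $|w''-y'_\ell|$ up front and split on $a\le c$ versus $c<a$, whereas the paper splits on $m_1+m_2\ge n/2$ versus $m_2+m_3\ge n/2$ and handles large $|x'_2-y'_{\ell-1}|$ as separate Cases~2 and~3; you anchor the inner application of Corollary~\ref{cor:x-summation} at $y'=w'$, the paper at $y''=x'_2$ --- but these are equivalent rearrangements, and you correctly identify (and resolve) the one genuine subtlety, namely that the inner application has one endpoint lying in $\varphi^{-1}(\bfK)$, which is harmless because the copy being summed over is a different one at distance $\ge N/2$.
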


\begin{proof}
The summand on the left-hand side is bounded above by
\begin{equation*}
\begin{split}
 \P [ A(x_1) \cap A(x_2) ]
 \le \sum_{m_1 + m_2 + m_3 = n} \sum_{\substack{x'_1 \in \bfK+x_1N \\ x'_2 \in \bfK+x_2N}}
	   &\left[ p_{m_1}(y'_{\ell-1}, x'_1) p_{m_2}(x'_1, x'_2) p_{m_3}(x'_2, y'_{\ell}) \right.\\
	   &\left. + p_{m_1}(y'_{\ell-1}, x'_2) p_{m_2}(x'_2, x'_1) p_{m_3}(x'_1, y'_{\ell}) \right].
\end{split}
\end{equation*}
Due to symmetry it is enough to consider the first term inside the summation. 
The estimates are again modelled on the proof of Proposition \ref{prop:weaker_capacity_ub}.

\emph{Case 1. $m_1 + m_2 \ge n/2$ and $|x'_2 - y'_{\ell-1}| \le 2 C_1 \sqrt{n} \sqrt{\log n}$.}
In this case we can use Corollary \ref{cor:x-summation}
with $y' = y'_{\ell-1}$ and $y'' = x'_2$ to perform the summation over $x'_1$ and $x_1$ and get the upper bound:
\begin{equation}
 C \frac{n}{N^d} \sum_{m'_1 + m'_2 = n} \sum_{x_2 \in \Z^d} \sum_{x'_2 \in \bfK+x_2N}
   p_{m'_1}(y'_{\ell-1}, x'_2) p_{m'_2} (x'_2, y'_\ell),
\end{equation}
where we have written $m'_1 = m_1 + m_2$ and $m'_2 = m_3$. Using again Corollary \ref{cor:x-summation}, this time with $y' = y'_{\ell-1}$ and $y'' = y'_\ell$ 
yields the upper bound
\begin{equation}
 C \left( \frac{n}{N^d} \right)^2 p_{n} (y'_{\ell-1}, y'_\ell)
 = o(1) \frac{n}{N^d} p_{n} (y'_{\ell-1}, y'_\ell).
\end{equation}

\emph{Case 2. $m_1 + m_2 \ge n/2$ and $2 C_1 \sqrt{n} \sqrt{\log n} < |x'_2 - y'_{\ell-1}| \le n^{\frac{1}{2}+\eps}$.} We are going to use that $\eps \le 1$, which we can clearly assume.
First sum over all $x'_1 \in \Z^d$ to get the upper bound
\begin{equation}
\label{e:case2-summation}
 C n \sum_{m'_1 + m'_2 = n} \sum_{x_2 \in \Z^d} \sideset{}{'}\sum_{x'_2 \in \bfK+x_2N}
   p_{m'_1}(y'_{\ell-1}, x'_2) p_{m'_2} (x'_2, y'_\ell), 
\end{equation}
where the primed summation denotes the restriction $2 C_1 \sqrt{n} \sqrt{\log n} < |x'_2 - y'_{\ell-1}| \le n^{\frac{1}{2}+\eps}$.
The choice of $C_1$ (recall \eqref{e:C_1-choice}) implies that $p_{m'_1}$ is $o(1/n^{1+3d/2} N^d)$.
Due to the triangle inequality we also have $|y'_\ell - x'_2| > C_1 \sqrt{n} \sqrt{\log n}$.
Using the LCLT for $p_{m'_2}$ we get that
\begin{equation}
 p_{m'_2} (x'_2, y'_\ell)
 \le \frac{C}{(m'_2)^{d/2}} \exp ( - d C_1^2 n \log n / m'_2 )
 \le \frac{C}{n^{d/2}} \exp ( - d C_1^2 \log n )
 \le C p_{n} (y'_{\ell-1}, y'_\ell). 
\end{equation}
Substituting this bound and $p_{m'_1} = o(1/n^{1+3d/2} N^d)$ into \eqref{e:case2-summation} we get 
\begin{equation*}
\begin{split}
    &C n\, o(1) \left(\frac{1}{n \cdot n^{3d/2} \cdot N^d} \right) 
    \sum_{m'_1 + m'_2 = n} \sideset{}{'}\sum_{x'_2}
    p_n(y'_{\ell-1},y'_\ell) \\
    &\qquad \le o(1) \left( \frac{n}{N^d} \right) p_{n}(y'_{\ell-1},y'_\ell) \sideset{}{'}\sum_{x'_2} \frac{1}{(n^{1/2+\eps})^d} \\
    &\qquad = o\left( \frac{n}{N^d} \right) p_{n}(y'_{\ell-1},y'_\ell).
\end{split}
\end{equation*}

\emph{Case 3. $m_1 + m_2 \ge n/2$ and $|x'_2 - y'_{\ell-1}| > n^{\frac{1}{2}+\eps}$.}
Summing over all $x'_1 \in \Z^d$, we get the transition probability 
$p_{m_1+m_2}(y'_{\ell-1}, x'_2)$. This is stretched-exponentially small, and hence this case satisfies the
required bound.

\emph{Case 4. $m_2 + m_3 \ge n/2$.} Due to symmetry, this case can be handled analogously to Cases 1--3.

\end{proof}

\subsection{Proof of Proposition \ref{prop:good_stretches}}
\label{ssec:good_stretches}

\begin{proof}[Proof of Proposition \ref{prop:good_stretches}.]
We start with the proof of the second claim.
We denote the error term in \eqref{e:good_stretches} as $E$, which we claim to satisfy 
$|E| \le E_1 + E_2 + E_3 + E_4$, with 
\begin{equation*}
\label{e:E-terms}
\begin{split}
 E_1
 &= \mathbf{P}_{o} \left[ \left\{\frac{S n}{N^d} < (\sqrt{\log\log n})^{-1} \right\} \cup \left\{\frac{S n}{N^d} > \log N \right\}\right] \\
 E_2
 &= \mathbf{P}_{o} \left[ \text{$\exists \ell: \ 1 \le \ell \le \log N\frac{N^d}{n}$ 
    such that $Y_{\ell n} \in \varphi^{-1}( B(\mathbf{x_0}, N^{\zeta}) )$} \right] \\
 E_3
 &= \mathbf{P}_{o} \left[ \text{$\exists\ t : T \le t < S n$ such that 
    $Y_t \in \varphi^{-1}(\mathbf{K})$} \right]. \\
 E_4
 &= \mathbf{P}_{o} \left[ \text{$\exists \ell: \ 1 \le \ell \le \log N\frac{N^d}{n}$ 
    such that $| Y_{\ell n}-Y_{(\ell-1)n} |> f(n)n^{\frac{1}{2}}$} \right]. 
\end{split}
\end{equation*}
Since $T\leq Sn$, we have 
\begin{align*}
\left|\mathbf{P}_{o} \left[ Y_t \not\in \varphi^{-1}(\mathbf{K}),\, 0 \le t < T \right]
- \mathbf{P}_{o} \left[ Y_t \not\in \varphi^{-1}(\mathbf{K}),\, 0 \le t < Sn \right]\right|
\leq E_3.
\end{align*}
By the Markov property, 
for $(\tau, (y_\ell, \mathbf{y}_\ell)_{\ell=1}^\tau ) \in \mathcal{G}_{\zeta,C_1}$,
\begin{equation*}
\begin{split}
&\prod_{\ell=1}^\tau \mathbf{P}_{y'_{\ell-1}} 
	 \left[ Y_n = y'_{\ell},\, \text{$Y_t \not\in \varphi^{-1}(\mathbf{K})$ for
	 $0 \le t < n$} \right] \\
& = \P_{o}\left[ \parbox{9cm}{$Y_{n \ell}=y'_{\ell}$ for $0 \le \ell \le \tau$; 
	  $Y_t \not\in \varphi^{-1}(\mathbf{K})$ for $0 \le t < \tau n$;
	  $Y_{n\ell } \not\in \varphi^{-1}( B(\mathbf{x_0}, N^{\zeta}) )$ for 
	 $0 < \ell \le \tau$} \right].
\end{split}
\end{equation*}

We denote the probability on the right-hand side by $p(\tau, (y_\ell, \mathbf{y}_\ell)_{\ell=1}^\tau)$.
On the event of the right-hand side, since $y_{\tau}\in D^{c}$, we have $S=\tau$.
We claim that
\begin{equation}
\begin{split}
\label{e:prop_claim}   
    \left| \sum_{( \tau, (y_\ell, \mathbf{y}_\ell)_{\ell=1}^\tau) \in \mathcal{G}_{\zeta,C_1}}p(\tau, (y_\ell,\mathbf{y}_\ell)_{\ell=1}^\tau) - \P_{o}\left[\text{$Y_t \not\in \varphi^{-1}(\mathbf{K})$ for $0 \le t < Sn$}\right]\right|
     \leq E_1 + E_2 + E_4.
\end{split}
\end{equation}

Let $\mathcal{E}_1, \mathcal{E}_2, \mathcal{E}_4$ be the events in the definitions of $E_1, E_2, E_4$ respectively.
Let 
$$A(\tau, (y_\ell,\bfy_\ell)_{\ell=1}^{\tau})
:= \left\{ \parbox{9cm}{$Y_{n \ell}=y'_{\ell}$ for $0 \le \ell \le \tau$; 
	  $Y_t \not\in \varphi^{-1}(\mathbf{K})$ for $0 \le t < \tau n$;
	  $Y_{n\ell } \not\in \varphi^{-1}( B(\mathbf{x_0}, N^{\zeta}) )$ for 
	 $0 < \ell \le \tau$} \right\}.$$
Then, we have
\begin{equation*}
\begin{split}
   &\P_{o}\left[\left\{\text{$Y_t \not\in \varphi^{-1}(\mathbf{K})$ for $0 \le t < Sn$}\right\} \cap \mathcal{E}^{c}_1 \cap \mathcal{E}^{c}_2 \cap \mathcal{E}^{c}_4\right] \\
   &\quad = \sum_{ \frac{N^d}{n\sqrt{\log \log n}} \le \tau \le \frac{N^d \log N}{n}} \P_o \left[\left\{S=\tau\right\} \cap \left\{\text{$Y_t \not\in \varphi^{-1}(\mathbf{K})$ for $0 \le t < \tau n$}\right\} \cap \mathcal{E}^{c}_2 \cap \mathcal{E}^{c}_4 \right] \\
   &\quad = \sum_{( \tau, (y_\ell, \mathbf{y}_\ell)_{\ell=1}^\tau ) \in \mathcal{G}_{\zeta,C_1}} \P_o \left[ A(\tau, (y_\ell,\bfy_\ell)_{\ell=1}^{\tau}) \cap \mathcal{E}^{c}_2 \cap \mathcal{E}^{c}_4 \right].
\end{split}
\end{equation*}

From above, the claim \eqref{e:prop_claim} follows.

The proof of the second claim of Proposition \ref{prop:good_stretches} follows subject to Lemmas \ref{lem:E_2}, \ref{lem:E_3} and \ref{lem:E_4} below that show $E_j \to 0$, for $j= 2, 3, 4$.
We have already shown $E_1 \to 0$ in Lemma \ref{lem:E1_prelim}.

Similarly, the first claim of the Proposition follows from Lemmas \ref{lem:E1_prelim}, \ref{lem:E_2}, \ref{lem:E_4}.
\end{proof}

We bound $E_2, E_3$ and $E_4$ in the following lemmas.

\begin{lemma}
\label{lem:E_2}
We have 
$E_2 \le C \log N \frac{N^{d \zeta}}{n}$ for some $C$.
Consequently, $E_2 \to 0$.
\end{lemma}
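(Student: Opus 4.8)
The plan is to combine a crude union bound over the $O(\log N \cdot N^d/n)$ relevant times $\ell n$ with the uniform on-diagonal estimate of Lemma \ref{lem:mixing_property}. First I would write, by the union bound,
\[
 E_2 \le \sum_{\ell=1}^{\lceil \log N \cdot N^d/n \rceil}
   \mathbf{P}_o\left[ \mathbf{Y}_{\ell n} \in B(\mathbf{x_0}, N^{\zeta}) \right].
\]
Since $\ell \ge 1$ forces $\ell n \ge n$, Lemma \ref{lem:mixing_property} applies to each summand and gives $\mathbf{P}_o[\mathbf{Y}_{\ell n} = \mathbf{x}] \le C/N^d$ for every $\mathbf{x} \in \T_N$. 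Summing this bound over the at most $C N^{\zeta d}$ points of $B(\mathbf{x_0},N^{\zeta})$ yields $\mathbf{P}_o[\mathbf{Y}_{\ell n} \in B(\mathbf{x_0},N^{\zeta})] \le C N^{\zeta d}/N^d$, uniformly in $\ell$.

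Plugging this into the union bound gives $E_2 \le C \log N \cdot (N^d/n) \cdot (N^{\zeta d}/N^d) = C \log N \cdot N^{d\zeta}/n$, which is the asserted inequality. For the consequence $E_2 \to 0$, recall $n = \lfloor N^{\delta} \rfloor$ and that $\zeta$ was chosen with $\zeta < \delta/d$ in \eqref{e:choice-zeta}; hence for large $N$ we have $N^{d\zeta}/n \le 2 N^{d\zeta - \delta}$ with exponent $d\zeta - \delta < 0$, and a negative power of $N$ dominates the $\log N$ factor, so the product tends to $0$.

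There is essentially no substantive obstacle here; the only two points that need to be stated carefully are (i) that the restriction $\ell \ge 1$ is exactly what guarantees $\ell n \ge n$, so Lemma \ref{lem:mixing_property} may be applied to every term of the sum, and (ii) that the final limit relies on the parameter choice $\zeta < \delta/d$ made in \eqref{e:choice-zeta}.
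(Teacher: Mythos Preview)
Your proposal is correct and follows essentially the same argument as the paper: a union bound over the $O(\log N \cdot N^d/n)$ times, Lemma~\ref{lem:mixing_property} at each time (valid since $\ell n \ge n$), a count of $O(N^{d\zeta})$ points in the ball, and the observation $\zeta < \delta/d$ from \eqref{e:choice-zeta} to conclude $E_2 \to 0$. If anything, you are slightly more explicit than the paper about why Lemma~\ref{lem:mixing_property} applies to every term.
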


\begin{proof}
Since 
the number of points in $B(\mathbf{x_0}, N^{\zeta})$
is $O(N^{d\zeta})$, and since we are considering times after the first stretch, the random walk is well mixed, 
so by Lemma \ref{lem:mixing_property} the probability to visit any point in the torus is $O(1/N^d)$.
Using a union bound we have
\begin{align*}
E_2 
\leq C \log N\frac{N^d}{n} N^{d\zeta} \frac{1}{N^d} 
= C \log N \frac{N^{d \zeta}}{n} ,
\end{align*}
Since $\zeta < \delta/d$, we have 
\begin{equation*}
    E_2 \to 0, 
    \quad \text{as $N\to\infty$}.
\end{equation*}
\end{proof}


Before we bound the error term $E_3$, we first introduce the following lemma. 
Let $T^{(i)}_0 = \inf\{t\ge 0: Y^{(i)}_t = 0 \}$, where recall that $Y^{(i)}$ denotes the $i$-th 
coordinate of the $d$-dimensional lazy random walk. 
We will denote by $t_0$ an instance of $T^{(i)}_0$. 

\begin{lemma}
\label{lem:E_3_preliminary}

For all $1 \le i \le d$, for any integer $y$ such that $-t_0 \le y < 0$ and $0 < t_0 \le n$ we have 
\begin{equation*}
\begin{split}
\E_y \left[Y^{(i)}_n \,|\, T^{(i)}_0 = t_0,\, Y_n^{(i)} > 0 \right] 
  &\leq C n^{\frac{1}{2}}\\
\E_y \left[(Y^{(i)}_n)^2 \,|\, T^{(i)}_0 = t_0,\, Y_n^{(i)} > 0 \right] 
  &\leq C n.
\end{split}
\end{equation*}
\end{lemma}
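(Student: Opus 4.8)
The plan is to reduce both moment bounds to estimates about a one-dimensional lazy walk started at $y$ (with $-t_0 \le y < 0$), conditioned on the event $\{T^{(i)}_0 = t_0, Y^{(i)}_n > 0\}$. Write $j = Y^{(i)}_{t_0}$; since the walk is lazy nearest-neighbour in each coordinate, at the first time it reaches $0$ it is exactly at $0$, so after time $t_0$ we may restart a fresh lazy one-dimensional walk $(W_s)_{s\ge 0}$ from $0$, and the conditioning becomes $W_{n-t_0} > 0$ with $0 \le n-t_0 \le n$. Thus it suffices to show $\E_0[W_s \mid W_s > 0] \le C\sqrt{s}$ and $\E_0[W_s^2 \mid W_s > 0] \le C s$ for all $1 \le s \le n$, uniformly in $s$; the bound for $s = 0$ is vacuous. (One has to be slightly careful that $t_0 = n$ is allowed, in which case $Y_n^{(i)} = 0$ and the conditioning event is empty — so implicitly $t_0 < n$; I would note this.)

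First I would handle the conditional second moment. By symmetry of the increments of $W$ about $0$ we have $\E_0[W_s^2 \mathbf{1}_{W_s>0}] = \tfrac12 \E_0[W_s^2] = \tfrac12 \cdot \tfrac{s}{2d} \cdot \text{(something)}$ — more precisely, since $W$ is the $i$-th coordinate of the lazy walk, $\Var(W_s) = \sigma^2 s = s/(2d)$, so $\E_0[W_s^2\mathbf{1}_{W_s>0}] = \tfrac12 \E_0[W_s^2] = \tfrac{s}{4d}$. Meanwhile $\P_0[W_s > 0] \ge c$ for $s \ge 1$: indeed $W_s$ is symmetric and non-degenerate, and by the local CLT (or directly) $\P_0[W_s \ge 1]$ is bounded below by a positive constant uniformly in $s \ge 1$. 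Dividing gives $\E_0[W_s^2 \mid W_s > 0] \le C s$, as required. For the conditional first moment I would use Cauchy--Schwarz: $\E_0[W_s \mid W_s > 0] \le \big(\E_0[W_s^2 \mid W_s > 0]\big)^{1/2} \le (Cs)^{1/2} = C'\sqrt{s}$. This sidesteps having to compute $\E_0[|W_s|]$ directly.

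Translating back: conditionally on $T^{(i)}_0 = t_0$ and $Y^{(i)}_n > 0$, the random variable $Y^{(i)}_n$ has the law of $W_{n - t_0}$ conditioned to be positive, with $n - t_0 \le n$. Hence $\E_y[Y^{(i)}_n \mid T^{(i)}_0 = t_0, Y^{(i)}_n > 0] \le C\sqrt{n - t_0} \le C\sqrt{n}$ and likewise $\E_y[(Y^{(i)}_n)^2 \mid \cdots] \le C(n - t_0) \le Cn$.

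The only real subtlety — the "hard part" — is justifying the strong Markov decomposition cleanly: one must check that conditioning on $\{T^{(i)}_0 = t_0\}$ and then on the post-$t_0$ event $\{W_{n-t_0} > 0\}$ is legitimate, i.e. that the law of $(Y^{(i)}_{t_0 + s})_{s\ge 0}$ given $T^{(i)}_0 = t_0$ is that of a lazy walk from $0$ independent of the conditioning value $t_0$. This is immediate from the strong Markov property applied at the stopping time $T^{(i)}_0$, using that $Y^{(i)}_{T^{(i)}_0} = 0$ deterministically on $\{T^{(i)}_0 < \infty\}$ because the coordinate process moves by steps in $\{-1,0,+1\}$. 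Everything else is the symmetry argument plus the uniform lower bound $\P_0[W_s > 0] \ge c$, both of which are standard. I would also remark that the same argument (with $W_s < 0$, or $> 0$ replaced by $\ge r$) gives analogous bounds should they be needed elsewhere.
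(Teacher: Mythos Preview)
Your proof is correct and follows essentially the same route as the paper: reduce via the strong Markov property at $T^{(i)}_0$ to a fresh walk from $0$ over $n-t_0$ steps, use the uniform lower bound $\P_0[W_s>0]\ge c$, and control the conditional first moment via Cauchy--Schwarz and the (unconditional) second moment $\E_0[W_s^2]=\sigma^2 s$. The only cosmetic differences are that you invoke symmetry to get the factor $\tfrac12$ in $\E_0[W_s^2\mathbf{1}_{W_s>0}]$ (the paper just uses the trivial bound $\le \E_0[W_s^2]$) and you derive the first-moment bound from the conditional second moment rather than directly from the unconditional one.
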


\begin{proof}
Using the Markov property at time $t_0$, we get 
\begin{align*}
\E_y\left[Y^{(i)}_n \,|\, T^{(i)}_0=t_0, Y_n^{(i)} > 0 \right]
&= \E_0 \left[Y_{n-t_0}^{(i)} \,|\, Y_{n-t_0}^{(i)} > 0 \right]
=\frac{\E_0\left[Y^{(i)}_{n-t_0}\mathbf{1}_{\{Y^{(i)}_{n-t_0} > 0\}}\right]}
	{\P_0\left[Y^{(i)}_{n-t_0}>0\right]} \\
&\leq C_0 \left(\E_0\left[(Y^{(i)}_{n-t_0})^2\right]\right)^{\frac{1}{2}}	
\leq C (n-t_0)^{\frac{1}{2}}
\leq C n^{\frac{1}{2}},
\end{align*}
where the third step is due to Cauchy-Schwarz inequality and 
$\P_0\left[Y^{(i)}_{n-t_0}>0\right]\geq c_0$ for some $c_0 > 0$, 
and the second to last step is due to $\E_0\left[(Y^{(i)}_{n-t_0})^2\right] = (n-t_0)/2d$.

We can similarly bound the conditional expectation of $(Y^{(i)}_n)^2$ as follows:
\begin{align*}
&\E_y\left[(Y^{(i)}_n)^2| T^{(i)}_0=t_0, Y^{(i)}_n>0\right]
=\E_0\left[(Y^{(i)}_{n-t_0})^2|Y^{(i)}_{n-t_0}>0\right] \\
&\qquad =\frac{\E_0\left[(Y^{(i)}_{n-t_0})^2\mathbf{1}_{\{Y^{(i)}_{n-t_0} > 0\}}\right]}
	{\P_0\left[Y^{(i)}_{n-t_0}>0\right]} 
\leq C_0 \left(\E_0\left[(Y^{(i)}_{n-t_0})^2\right]\right)
\leq C (n-t_0)
\leq C n. 
\end{align*}
\end{proof}

\begin{lemma}
\label{lem:E_3}
We have $E_3 \to 0$ as $N \to \infty$.
\end{lemma}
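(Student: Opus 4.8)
The plan is to bound $E_3$ by cutting the random time interval $[T,Sn)$ into a short segment right after $T$ together with a controlled number of length-$n$ stretches, and treating the pieces separately. By a union bound over the $2d$ faces we may assume the walk exits through $\{Y^{(1)}=L\}$, so that $Y^{(1)}_T=L$. Setting $\ell_0=\lceil T/n\rceil$, so that $Y_{\ell n}\in(-L,L)^d$ for $\ell<\ell_0$ and hence $S\ge\ell_0$, the first step is to show that the overshoot is short: $\P_o[\,S-\ell_0>B_N\,]\to 0$ for a suitably slowly growing $B_N\to\infty$. After time $T$ the exited coordinate sits at level $L$, and the values $Y^{(1)}_{\ell n}-L$ at successive stretch endpoints form (up to the lazy rescaling) a mean-zero random walk with increments of order $\sqrt n$ and Gaussian tails; by \eqref{e:maximal_ineq} all of these increments over the first $B_N$ stretches are $O(\sqrt{n\log N})$ with high probability. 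Given this, at each stretch the endpoint value has a non-negligible chance of being at least $L$, i.e.\ of $Y_{\ell n}\notin(-L,L)^d$, which forces $S=\ell$. Lemma~\ref{lem:E_3_preliminary}, applied to the exited coordinate, is what controls the endpoint value $Y^{(1)}_{\ell n}-L$ under the conditioning that $S$ has not yet occurred, keeping it of order $\sqrt n$; summing the resulting geometric-type decay over $\ell$ gives the claim. On $\{S-\ell_0\le B_N\}$ we have $[T,Sn)\subseteq[T,T+(B_N+1)n)$.

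It then remains to bound $\P_o[\exists\,t\in[T,T+(B_N+1)n):\,Y_t\in\varphi^{-1}(\mathbf K)]$. Since $T\gg n$ with high probability (by \eqref{e:maximal_ineq}, exactly as in Lemma~\ref{lem:E1_prelim}), restrict to $T\ge n$ at negligible cost and split the interval at $T+n$. For $t\in[T+n,T+(B_N+1)n)$, the strong Markov property at $T$ and Lemma~\ref{lem:mixing_property} (translated to an arbitrary starting point) give $\P_{\mathbf z}[\mathbf Y_s\in\mathbf K]\le|\mathbf K|\,C/N^d$ for all $s\ge n$, so a union bound over the $\le(B_N+1)n$ relevant times contributes $O(B_N n/N^d)=o(1)$. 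For $t\in[T,T+n)$, the strong Markov property at $T$ bounds the contribution by $\E_o\big[\P_{Y_T}[H_{\varphi^{-1}(\mathbf K)}<n]\big]$; within this stretch the times $\ge n$ contribute $O(n/N^d)$ again by Lemma~\ref{lem:mixing_property}, while for the times $<N^2$ I would use the Green's-function estimate $\P_{y'}[H_{\varphi^{-1}(\mathbf K)}<N^2]\le C\,\mathrm{dist}(\varphi(y'),\mathbf K)^{-(d-2)}+(\text{exponentially small})$, proved exactly as in Lemma~\ref{lem:Green-bnd}. The crucial point is that, since $T\gg N^2$, the remaining $d-1$ coordinates have equilibrated, so $\varphi(Y_T)$ is essentially uniform on the $(d-1)$-dimensional slice $\{\mathbf z\in\T_N:\mathbf z^{(1)}\equiv L\}$, i.e.\ $\P_o[\varphi(Y_T)=\mathbf z]\le C/N^{d-1}$ for every $\mathbf z$; summing $C\,\mathrm{dist}(\mathbf z,\mathbf K)^{-(d-2)}$ against this density over the slice gives $O(N^{2-d})=o(1)$ for $d\ge3$, the event $\varphi(Y_T)\in\mathbf K$ itself contributing only $|\mathbf K|\,C/N^{d-1}=o(1)$. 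Collecting the bounds proves $E_3\to 0$.

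The delicate part is the analysis of the walk immediately after its first exit from $(-L,L)^d$: in the exited coordinate it is pinned at distance $O(\sqrt n)$ from the face, hence not yet equidistributed modulo $N$ in that direction, even though it is well mixed in the remaining $d-1$ directions. Lemma~\ref{lem:E_3_preliminary} is exactly the tool that makes the conditioning on ``the walk has re-entered and has not yet produced a stretch endpoint outside the box'' tractable, yielding both the geometric-type bound on $S-\ell_0$ in the first step and, through the resulting control of $Y_T$, the near-uniformity of $\varphi(Y_T)$ within its slice used in the second step; everything else reduces to the mixing bound of Lemma~\ref{lem:mixing_property} and Green's-function estimates of the type already established in Lemma~\ref{lem:Green-bnd}.
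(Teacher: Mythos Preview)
Your overall decomposition matches the paper's: bound the overshoot $S-\ell_0$, then bound the probability of hitting $\varphi^{-1}(\bfK)$ during the overshoot window. However, two of your arguments have genuine gaps.

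\medskip

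\textbf{The overshoot bound.} Your claim of ``geometric-type decay'' for $\P[S-\ell_0>B_N]$ is incorrect. Write $W_\ell = Y^{(1)}_{(\ell_0+\ell)n}-L$; this is a mean-zero walk with increments of variance $\sim n$, started at $W_0$ with $|W_0|=O(\sqrt n)$, and you want the hitting time of $[0,\infty)$. Conditioned on $\{W_0,\dots,W_{\ell-1}<0\}$, the walk does \emph{not} stay at distance $O(\sqrt n)$ from the barrier; it drifts away like a random walk conditioned to stay negative, so typically $|W_{\ell-1}|\asymp\sqrt{\ell n}$ and $\P[W_\ell\ge 0\mid W_{\ell-1}]\asymp 1/\sqrt\ell$, not a fixed constant. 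The true tail is $\P[\tilde S>k]\asymp 1/\sqrt k$, which still gives the conclusion but not via geometric decay. Lemma~\ref{lem:E_3_preliminary} does not give what you claim: it bounds the overshoot \emph{above} $0$ conditioned on having hit $0$ and being positive, not the position under the conditioning of having stayed negative. The paper instead runs an optional-stopping argument: with $M_r=-W_r$ and a second barrier at $h=\sqrt{n N^{\eps_1}}$, it uses the quadratic martingale $G_r=M_r^2-hM_r-\sigma^2 nr$ (with Lemma~\ref{lem:E_3_preliminary} controlling the over/undershoots at $T_h$) to get $\E[T_h]\le Ch/\sqrt n$ and $\P[M_{T_h}\ge h]\le C\sqrt n/h$, whence $\P[\tilde S>N^{\eps_1}]\le C N^{-\eps_1/2}$.

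\medskip

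\textbf{Hitting during $[T,T+n)$.} Your alternative to the paper's route---averaging the Green's-function bound over the law of $\varphi(Y_T)$ rather than first forcing $\varphi(Y_T)\notin B(\bfx_0,N^\zeta)$---is valid and arguably cleaner. But the justification ``$T\gg N^2$, so the other $d-1$ coordinates have equilibrated'' is only a heuristic: $T$ is a stopping time depending on all coordinates. The rigorous statement $\P_o[\varphi(Y_T)=\mathbf z]\le C/N^{d-1}$ follows instead from the half-space Poisson-kernel bound $\P_o[Y_T=y']\le C/L^{d-1}$ (as the paper invokes) together with the count of $\sim m^{d-1}$ preimages of $\mathbf z$ on the face. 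Also, ``times $\ge n$'' within the first stretch should read ``times $\ge N^2$'' (or $N^{2+6\eps}$); Lemma~\ref{lem:mixing_property} as stated only covers $t\ge n$, though its proof extends to $t\ge N^2$.
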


\begin{proof}
First we are going to bound the time difference between $T$ and $Sn$.
We are going to consider separately the cases when $Y_T$ is in each face 
of the cube $(-L,L)^d$. Assume that we have $Y^{(i)}_T = L$ for some $1 \le i \le d$.
(The arguments needed are very similar when $Y^{(i)}_T = -L$ for some $1\le i \le d$, and
these will not be given.)

Let us consider the lazy random walk $(Y_t)_{t\ge 0}$ in multiples of $n$ steps. 
Let 
\begin{equation*}
    s_1 
  = \min \{ \ell n : \ell n \ge T \} - T,
\end{equation*}
and similarly, let 
\begin{equation*}
    s_{r+1}
    = r n + \min \{ \ell n : \ell n \ge T \} - T, \quad r \ge 1.
\end{equation*}
We let $M_0 = L-Y^{(i)}_{T+s_1}$ and $M_r = L-Y^{(i)}_{T+s_{r+1}}$ for $r\ge 1$.
We have that $(M_r)_{r\ge 0}$ is a martingale. 
Let $\tilde{S} = \inf \{ r \ge 0: M_r \le 0\}$, and we are going to bound
$\P [ \tilde{S} > N^{\eps_1} ]$ for some small $\eps_1$ that we are going to
choose in the course of the proof.
We are going to adapt an argument in \cite[Proposition 17.19]{LevinPeresWilmer2017}
to this purpose. 

Define
\begin{equation*}
T_h = \inf\{r\geq 0: \text{$M_r\leq 0$ or $M_r\geq h$} \}, 
\end{equation*}
where we set $h= \sqrt{n}\sqrt{N^{\eps_1}}$. 
Let $(\mathcal{F}_r)_{r \ge 0}$ denote the filtration generated by $(M_r)_{r \ge 0}$.
We have 
\begin{equation}
\label{e:var_M_r}
    \Var(M_{r+1} \,|\, \mathcal{F}_r) = n \sigma^2
    \quad \text{for all $r \ge 0$},
\end{equation}
where recall that $\sigma^2$ is the variance of $Y^{(i)}_1$.

We first estimate $\E [ M_0 \,|\, \tilde{S} > 0 ]$. 
Since $0 \le s_1 < n$, by the same argument as in 
Lemma \ref{lem:E_3_preliminary} we have that 
\begin{equation*}
\begin{split}
\E [ M_0 \,|\, Y^{(i)}_{T+s_1} < L] 
&= \E [ L-Y^{(i)}_{T+s_1} \,|\, L - Y^{(i)}_{T+s_1} > 0 ]
\leq C n^{\frac{1}{2}}.
\end{split}
\end{equation*}

%
%
We first bound $\P[M_{T_h}\ge h \,|\, M_0]$. Since $(M_{r\wedge T_h})$ is bounded,
by the Optional Stopping Theorem, we have
\begin{equation*}
\begin{split}
M_0 
&= \E [M_{T_h}|M_0] 
= \E\left[M_{T_h}\mathbf{1}_{\{M_{T_h}\leq 0\}}|M_0\right]
	+\E\left[M_{T_h}\mathbf{1}_{\{M_{T_h}\geq h\}}|M_0\right]\\
&= -m_{-}(h) + \E\left[M_{T_h}\mathbf{1}_{\{M_{T_h}\geq h\}}|M_0\right]\\
&\geq -m_{-}(h) + h\P\left[M_{T_h}\geq h |M_0 \right],
\end{split}
\end{equation*}
where we denote $\E[M_{T_h}\mathbf{1}_{\{M_{T_h}\leq 0\}}|M_0]$ by $-m_{-}(h) \leq 0$ 
and the last step is due to $M_{T_h} \mathbf{1}_{\{M_{T_h} \ge h\}} \ge h \mathbf{1}_{\{M_{T_h} \ge h\}}$.
Hence, we have 
\begin{align*}
M_0+m_{-}(h) \geq h\P\left[M_{T_h}\geq h|M_0\right].
\end{align*}

We bound $m_{-}(h)$ using Lemma \ref{lem:E_3_preliminary}
\begin{align*}
m_{-}(h)
\leq \max_{y\leq L} \E_y\left[Y^{(i)}_n-L|Y^{(i)}_n > L  \right]
\leq C n^{\frac{1}{2}}.
\end{align*}
Hence, we have 
\begin{align*}
\P[M_{T_h}\geq h \,|\, M_0]
\leq \frac{M_0}{h}+\frac{C n^{\frac{1}{2}}}{h}.
\end{align*}

We now estimate $\P [ T_h \geq r \,|\, M_0 ]$. 
%
Let $G_r = M^2_r-hM_r-\sigma^2 nr$. The sequence $(G_r)$ is a martingale by \eqref{e:var_M_r}.
%
We can bound both the `overshoot' above $h$ and the `undershoot' below $0$ by Lemma \ref{lem:E_3_preliminary}.
For the `undershoot' below $0$ we have
\begin{equation*}
\begin{split}
    \E [ (M_{T_h} - h) M_{T_h} \,|\, M_{T_h} \le 0, M_0 ]
    &= \E [ M_{T_h}^2 \,|\, M_{T_h} \le 0, M_0 ] + \E [ -h M_{T_h} \,|\, M_{T_h} \le 0, M_0 ] \\
	&\le C n + C h n^{1/2}. 
\end{split}
\end{equation*}

For the `overshoot' above $h$, write $M_{T_h} =: N_{T_h}+h$, then we have
\begin{equation*}
(M_{T_h}-h)M_{T_h} = N_{T_h}(h+N_{T_h}),
\end{equation*}
Hence
\begin{equation*}
\begin{split}
   \E [ (M_{T_h} - h) M_{T_h} \,|\, M_{T_h} \ge h, M_0 ]
   &= \E [ h N_{T_h} \,|\, N_{T_h} \ge 0, M_0 ] + \E [ N_{T_h}^2 \,|\, N_{T_h} \ge 0, M_0 ]\\
    &\le C h n^{1/2} + C n.  
\end{split}
\end{equation*}

For $r < T_h$, we have $(M_r - h) M_r < 0$.
%
%
Therefore, we have
\begin{align*}
\E\left[M^2_{r\wedge T_h}-hM_{r\wedge T_h}| M_0\right]
&\leq C h n^{1/2} + C n. 
\end{align*}

Since $(G_{r\wedge T_h})$ is a martingale
\begin{equation*}
\begin{split}
-hM_0 \le G_0 
\le \E[ G_{r\wedge T_h}| M_0]
&= \E[M^2_{r\wedge T_h}-hM_{r\wedge T_h}| M_0]-\sigma^2 n\E[r\wedge T_h| M_0]\\
&\le Cn^{\frac{1}{2}}h+Cn-\sigma^2 n \E[r\wedge T_h| M_0].
\end{split}
\end{equation*}
We conclude that $\E[r\wedge T_h \,|\, M_0 ] \leq \frac{h(M_0+Cn^{\frac{1}{2}})+Cn}{\sigma^2 n}$.
Letting $r\to\infty$, by the Monotone Convergence Theorem, 
\begin{equation*}
\E[T_h \,|\, M_0 ] \leq \frac{h(M_0+Cn^{\frac{1}{2}})+Cn}{\sigma^2 n},
\end{equation*}
where $h= \sqrt{n}\sqrt{N^{\eps_1}}$. This gives
\begin{equation*}
\begin{split}
\P[T_h> N^{\eps_1}|M_0] 
\le \frac{1}{N^{\eps_1}}\left[\frac{\sqrt{n}\sqrt{N^{\eps_1}}M_0+Cn\sqrt{N^{\eps_1}}+Cn}{\sigma^2 n}\right].
\end{split}
\end{equation*}
Taking expectations of both sides, we have 
\begin{equation*}
\begin{split}
\P[T_h>N^{\eps_1}] 
&\le \frac{1}{N^{\eps_1}}\left[\frac{\sqrt{n}\sqrt{N^{\eps_1}}\E M_0
+Cn\sqrt{N^{\eps_1}}+Cn}{\sigma^2 n}\right]\\
&= \frac{\E M_0}{\sigma^2\sqrt{n}\sqrt{N^{\eps_1}}}+\frac{C}{\sigma^2\sqrt{N^{\eps_1}}}
+\frac{C}{\sigma^2N^{\eps_1}}
\le \frac{C}{\sqrt{N^{\eps_1}}}.
\end{split}
\end{equation*}


Combining the above bounds, we get
\begin{align*}
\P[\tilde{S} > N^{\eps_1}]
&\leq 
\P[ M_{T_h} \ge h ] + \P[ T_h > N^{\eps_1} ] 
\le \frac{\E [ M_0 ]}{h}
	+\frac{C n^{\frac{1}{2}}}{h}+ \frac{C}{\sqrt{N^{\eps_1}}}
\leq \frac{C}{\sqrt{N^{\eps_1}}}.
\end{align*}

We now bound the probability that a copy of $\bfK$ is hit between times 
$T$ and $s_{N^{\eps_1}}$.

We first show that the probability that the lazy random walk on the torus is in the ball 
$B(\bfx_0, N^\zeta)$ at time $T$ goes to $0$. Indeed, we have
\begin{align*}
&\P_{o}\left[Y_{T}\in \varphi^{-1}\left(B(\bfx_0, N^{\zeta})\right)\right]
= \sum_{y' \in\partial (-L,L)^d \cap \varphi^{-1}(B(\bfx_0, N^{\zeta}))}
	\P_{o}\left[Y_{T}=y'\right]\\
&\qquad \leq C N^{\zeta(d-1)} \frac{L^{d-1}}{N^{d-1}} \frac{C}{L^{d-1}}
= C N^{(\zeta-1)(d-1)},
\end{align*}
where we have $\zeta<\delta/d < 1$, so the last expression goes to $0$.
Here we used that $\P_{o} [ Y_T = y' ] \le C/L^{d-1}$, for example using
a half-space Poisson kernel estimate \cite[Theorem 8.1.2]{LawlerLimic2010}.
As for the number of terms in the sum, we have that there are $C L^{d-1}/N^{d-1}$ copies of the torus within $\ell_{\infty}$ distance $\le N$ of the boundary $\partial (-L,L)^d$. 
Considering the intersection of the union of balls $\varphi^{-1}(B(\bfx_0, N^{\zeta}))$ and the boundary
$\partial (-L, L)^d$, the worst case is that within a single copy of the torus the intersection has size at most $C N^{\zeta(d-1)}$.

%

Condition on the location $y'$ of the walk at the exit time $T$.
For $y'\notin \varphi^{-1}\left(B(\bfx_0, N^{\zeta})\right)$ 
we first bound the probability of hitting $\bfK$ between the times between $0$ and $s_2$.
After time $s_2$, the random walk is well mixed, and we can apply a simpler 
union bound.

We thus have the upper bound
\begin{align*}
\sum_{t=0}^{s_2}\sum_{x'\in\varphi^{-1}(\bfK)}p_t(y',x')
\leq \P\left[\max_{0\leq t\leq s_2}|Y^{(i)}_{t}-y'| \geq n^{\frac{1}{2}+\eps}\right] 
	+\sum_{\substack{x'\in\varphi^{-1}(\bfK)\\|x'-y'|\leq n^{\frac{1}{2}+\eps}}}G(y',x').
\end{align*}
The first term is stretched-exponentially small due to the Martingale maximal inequality \eqref{e:maximal_ineq}.
The Green's function term is bounded by Lemma \ref{lem:Green-bnd}(iii).

After time $s_2$, by Lemma \ref{lem:mixing_property}, we have that
\begin{align*}
\sum_{t=s_2}^{s_{N^{\eps_1}}}\P_y\left[Y_t\in\varphi^{-1}(K)\right]
\leq n\cdot N^{\eps_1}|\bfK|\frac{C}{N^d}
= C\, N^{\delta+\eps_1-d}.
\end{align*}

Therefore, combining the above upper bounds, we have the required result. 
\begin{align*}
E_3\leq C \cdot N^{-\frac{\eps_1}{2}}
  +C \cdot N^{\delta-d+2 \delta \eps} + C\cdot N^{\delta-d+\eps_1} \to 0, 
\quad \text{as $N\to\infty$,}
\end{align*}
if $\eps$ and $\eps_1$ are sufficiently small.
\end{proof}

\begin{lemma}
\label{lem:E_4}
We have 
$E_4 \le C e^{-cf(n)^2}\frac{N^d \log N}{n}$ for some $C$.
Consequently, 
there exists $ C_1$ such that if $f(n)\geq C_1\sqrt{\log N}$, then $E_4\to 0$.
\end{lemma}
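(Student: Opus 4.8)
The plan is to bound $E_4$ by a union bound over the at most $C\log N\cdot N^d/n$ stretches, to reduce each term to the probability that a single lazy-walk displacement over $n$ steps exceeds $f(n)\sqrt n$, and then to control that probability via the Martingale maximal inequality \eqref{e:maximal_ineq} applied coordinate by coordinate.

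First I would fix $\ell$ with $1\le\ell\le\log N\,\frac{N^d}{n}$ and use the Markov property at time $(\ell-1)n$ together with the translation invariance of the lazy walk: conditionally on $Y_{(\ell-1)n}$, the increment $Y_{\ell n}-Y_{(\ell-1)n}$ has the same law as $Y_n$ under $\P_o$, so
\[
  \P_o\bigl[\,|Y_{\ell n}-Y_{(\ell-1)n}|>f(n)\,n^{1/2}\,\bigr]=\P_o\bigl[\,|Y_n|>f(n)\,n^{1/2}\,\bigr].
\]
Since $|Y_n|>f(n)\,n^{1/2}$ forces $|Y^{(i)}_n|>f(n)\,n^{1/2}/\sqrt d$ for some coordinate $i$, I would bound the right-hand side by a sum over $1\le i\le d$ and the two signs of maximal probabilities $\P_o[\max_{0\le j\le n}(\pm Y^{(i)}_j)\ge f(n)\,n^{1/2}/\sqrt d]$, and apply \eqref{e:maximal_ineq} with $t=n$ and $r\sigma\sqrt n=f(n)\,n^{1/2}/\sqrt d$. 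Using $\sigma^2=(2d)^{-1}$ this gives $r=\sqrt 2\,f(n)$, hence $r^2/2=f(n)^2$ and each term is at most $e^{-f(n)^2}\exp\{O(f(n)^3/\sqrt n)\}$.

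The only point that needs care is the error factor $\exp\{O(f(n)^3/\sqrt n)\}$: since $f(n)=C_1\sqrt{\log N}$ and $n=\lfloor N^\delta\rfloor$ with $\delta>2$, we have $f(n)^3/\sqrt n=O\bigl((\log N)^{3/2}N^{-\delta/2}\bigr)\to 0$, so this factor is $1+o(1)$ uniformly in $\ell$. Consequently $\P_o[|Y_{\ell n}-Y_{(\ell-1)n}|>f(n)\,n^{1/2}]\le C e^{-f(n)^2}$ for all large $N$, and summing over the at most $C\log N\cdot N^d/n$ values of $\ell$ gives $E_4\le C e^{-c f(n)^2}\,\frac{N^d\log N}{n}$ (one may take $c=1$ up to the $1+o(1)$ factor, or any fixed $c<1$ cleanly).

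Finally, plugging in $f(n)=C_1\sqrt{\log N}$ gives $e^{-c f(n)^2}=N^{-c C_1^2}$, so $E_4\le C\,N^{-cC_1^2}(\log N)\,N^{d-\delta}$; since $\delta<d$, choosing $C_1$ with $cC_1^2>d-\delta$ makes this tend to $0$ as $N\to\infty$ (and the case $f(n)\ge C_1\sqrt{\log N}$ then follows since $E_4$ is non-increasing in $f$). I do not expect a real obstacle here: the argument is a routine union bound, and the only subtlety is keeping the maximal-inequality error term negligible, which is precisely where the constraint $\delta>2$ is used.
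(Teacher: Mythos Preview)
Your proposal is correct and takes essentially the same approach as the paper: a union bound over the at most $\log N\cdot N^d/n$ stretches combined with the martingale maximal inequality \eqref{e:maximal_ineq} applied coordinatewise. The paper's own proof is a terse two-liner that omits the details you supply (handling the $O(r^3/\sqrt t)$ error factor and tracking the exact constant), and its choice $C_1>\sqrt{d/c}$ is slightly more generous than your sharper condition $cC_1^2>d-\delta$, but the substance is identical.
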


\begin{proof}
By the Martingale maximal inequality \eqref{e:maximal_ineq}, 
we have that 
\begin{align*}
E_4 \le C e^{-cf(n)^2}\frac{N^d \log N}{n}.
\end{align*}

Taking say $C_1 > \sqrt{d/c}$ implies that if $f(n)\geq C_1\sqrt{\log N}$, we have
\begin{align*}
E_4\to 0, \quad\text{as $N\to\infty$}.
\end{align*}

\end{proof}

%
%

\subsection{Proof of Proposition \ref{prop:number_l}}
\label{ssec:number_l}

\begin{proof}

By Martingale maximal inequality \eqref{e:maximal_ineq} used in the second step we have
\begin{equation*}
\begin{split}
 \P_{y'_{\ell-1}}[|Y_n-y'_{\ell-1}|>\sqrt{n}(10\log\log n)]
 &=\P_{0}[|Y_n|>\sqrt{n}(10\log\log n)]\\
 &\le \exp(-c 100(\log\log n)^2).
\end{split}
\end{equation*}
Hence we have
\begin{equation*}
\begin{split}
 &\E \left[\# \left\{ 1 \le \ell \le \frac{N^d}{n}C_1\log N : |Y_{n\ell} - Y_{n(\ell-1)}| > 10 \sqrt{n} \log\log n \right\} \right] \\
 &\qquad \le \frac{N^d}{n} C_1 \log N \exp(-c(\log\log n)^2)
 \le \frac{N^d}{n} C \exp (-(c/2) (\log\log n)^2 ).
\end{split}
\end{equation*}
By Markov's inequality, it follows that 
\begin{equation*}
\begin{split}
 &\P \left[ \# \left\{ 1 \le \ell \le \frac{N^d}{n}C_1\log N : |Y_{n\ell} - Y_{n(\ell-1)}| > 10 \sqrt{n} \log\log n \right\} 
    \ge \frac{N^d}{n} (\log \log n)^{-1} \right] \\
 &\qquad \le \frac{\frac{N^d}{n} C \exp (-(c/2) (\log\log n)^2 )}{\frac{N^d}{n} (\log \log n)^{-1}}
 \le C \frac{\exp (-(c/2) (\log\log n)^2 )}{(\log \log n)^{-1}}
 \to 0,
\end{split}
\end{equation*}
as $N\to\infty$.

\end{proof}

\textbf{Acknowledgements.} We thank two anonymous referees for their constructive criticism.
The research of Minwei Sun was supported by an EPSRC doctoral training grant to the University of Bath 
with project reference EP/N509589/1/2377430.


\bibliographystyle{plain}
\bibliography{torus_argument}

\noindent\small
Antal A.~J\'arai and Minwei Sun\\
Address: Department of Mathematical Sciences, University of Bath, Claverton Down, Bath, BA2 7AY, United Kingdom\\
Email: \texttt{A.Jarai@bath.ac.uk}, \texttt{ms2271@bath.ac.uk}

\end{document}